\DeclareFontFamily{OMX}{MnSymbolE}{}
\DeclareSymbolFont{MnLargeSymbols}{OMX}{MnSymbolE}{m}{n}
\DeclareFontShape{OMX}{MnSymbolE}{m}{n}{
	<-6>  MnSymbolE5
	<6-7>  MnSymbolE6
	<7-8>  MnSymbolE7
	<8-9>  MnSymbolE8
	<9-10> MnSymbolE9
	<10-12> MnSymbolE10
	<12->   MnSymbolE12
}{}
\DeclareFontShape{OMX}{MnSymbolE}{b}{n}{
	<-6>  MnSymbolE-Bold5
	<6-7>  MnSymbolE-Bold6
	<7-8>  MnSymbolE-Bold7
	<8-9>  MnSymbolE-Bold8
	<9-10> MnSymbolE-Bold9
	<10-12> MnSymbolE-Bold10
	<12->   MnSymbolE-Bold12
}{}
\let\llangle\@undefined
\let\rrangle\@undefined
\DeclareMathDelimiter{\llangle}{\mathopen}%
{MnLargeSymbols}{'164}{MnLargeSymbols}{'164}
\DeclareMathDelimiter{\rrangle}{\mathclose}%
{MnLargeSymbols}{'171}{MnLargeSymbols}{'171}
\DeclareMathOperator{\Tr}{Tr}
\DeclareMathOperator{\Span}{span}
\DeclareMathOperator*{\argmin}{arg\,min}
\DeclareMathOperator{\supp}{supp}
\DeclareMathOperator{\Ran}{Ran}
\DeclareMathOperator{\sgn}{sgn}
\theoremstyle{plain}
\newtheorem{theorem}{Theorem}
\newtheorem{lemma}[theorem]{Lemma}
\newtheorem{corollary}[theorem]{Corollary}
\newtheorem{proposition}[theorem]{Proposition}
\newtheorem{assumption}{Assumption}
\newtheorem{problem}{Problem}
\newtheorem{conjecture}[theorem]{Conjecture}
\theoremstyle{definition}
\newtheorem{definition}[theorem]{Definition}
\newtheorem{example}[theorem]{Example}
\theoremstyle{remark}
\newtheorem{remark}[theorem]{Remark}
\begin{document}

\title{Constructing sampling schemes via coupling: Markov semigroups and optimal transport}

\author{N. N{\"u}sken and G. A. Pavliotis
}

\maketitle

\begin{abstract}
	In this paper we develop a general framework for constructing and analysing coupled Markov chain Monte Carlo samplers, allowing for both (possibly degenerate) diffusion and piecewise deterministic Markov processes. For many performance criteria of interest, including the asymptotic variance, the task of finding efficient couplings can be phrased in terms of problems related to optimal transport theory. We investigate general structural properties, proving a singularity theorem that has both geometric and probabilistic interpretations. Moreover, we show that those problems can often be solved approximately and support our findings with numerical experiments. For the particular objective of estimating the variance of a Bayesian posterior, our analysis suggests using novel techniques in the spirit of antithetic variates. Addressing the convergence to equilibrium of coupled processes we furthermore derive a modified Poincar{\'e} inequality.
\end{abstract}

\section{Introduction and motivation}

Many computational problems arising in machine learning, Bayesian statistics, molecular dynamics and various other fields require the approximation of probability distributions (in the following denoted by $\pi$) on a high-dimensional space $E$. In particular, uncertainty quantification in a Bayesian framework is intimately related to the evaluation of appropriate summary statistics such as the variance of the posterior \cite[Chapter 10]{ghanem2017handbook}, \cite[Chapter 8]{smith2013uncertainty}, \cite[Chapter 6]{sullivan2015introduction}. Often, this task is approached by considering empirical measures associated to an ensemble of $n$ particles, i.e. approximations of the form 
\begin{equation}
\pi \approx \frac{1}{n}\sum_{i=1}^{n} \delta_{X^{(i)}} =: \tilde{\pi},
\end{equation}
where $X^{(i)}$ stands for the location of the $i$th particle and $\delta_x$ denotes the Dirac measure centred at $x \in E$. Usually, the particles are moved according to some (more often than not stochastic) dynamics, judiciously crafted in order for the empirical measure $\tilde{\pi}_t := \frac{1}{n}\sum_{i=1}^{n} \delta_{X_t^{(i)}}$ to approach $\pi$ when $t$ reaches a terminal value (finite or infinite). This methodology has been particularly influential in statistical inference of hidden-state Markov models (stochastic filtering or sequential Monte Carlo, see for instance \cite{RC2015}, \cite{dM2013} and references therein). Ensemble based methods have also been employed in the contexts of optimisation \cite{pinnau2017consensus,revees2004genetic}, molecular dynamics \cite{rousset:hal-00008276}, Markov chain Monte Carlo \cite{LMW2018,neal2017circularly}, or variational Bayesian inference \cite{liu2016stein}. Let us also mention the works \cite{andrieu2010particle} and \cite{heng2015gibbs}, combining different aspects of various sampling strategies. The increasing availability of parallel-processing computational architectures has further encouraged the development and analysis of similar methodologies.

From an abstract perspective, many of the aforementioned algorithms targeting a probability measure $\pi$ on some state space $E$ naturally produce probability measures $\bar{\pi}$ on the product space $\bar{E} =\prod_{i = 1}^n E_i$, where $E_i$ is an identical copy of $E$, standing for the state space of the $i$th particle. Denoting by $P_i:\mathcal{P}(\bar{E}) \rightarrow \mathcal{P}(E_i)$ the mappings that send probability measures on $\bar{E}$ to their marginals on $E_i$, one then obtains the measure
\begin{equation}
\label{eq:mean of marginals}
\frac{1}{n}\sum_{i=1}^n P_i(\bar{\pi})
\end{equation}
as an approximation for $\pi$. Clearly, the map 
\begin{equation}
\label{eq:sum of marginals}
\Pi :\mathcal{P}(\bar{E}) \rightarrow \mathcal{P}(E), \quad \bar{\pi} \mapsto \frac{1}{n} \sum_{i = 1}^n P_i(\bar{\pi})
\end{equation}
is far from injective since $\Pi(\bar{\pi})$ only depends on the marginals of $\bar{\pi}$. This viewpoint shows that there is a considerable flexibility when generating the joint measure $\bar{\pi}$, immediately suggesting fruitful connections to the theory of couplings of probability measures \cite{lindvall2002,thorisson2000} prominently encountered for instance in relation to optimal transport problems \cite{V2003,V2009} or decay estimates in Wasserstein distances (see for instance \cite{eberle2016reflection}). Since in applications $\Pi(\bar{\pi})$ is only an approximation of the target measure of interest, the freedom to design appropriate couplings can be used to suppress bias, variance and discretisation errors. This general idea has proved to be very versatile, leading to powerful simulation techniques such as multilevel Monte Carlo \cite{Giles2015}, coupling from the past \cite{Propp1998coupling} and antithetic variates \cite[Section 9.2]{kroese2013handbook}. 

\subsection{Couplings and Markov Chain Monte Carlo}
In this paper we focus on coupling techniques in the context of Markov chain Monte Carlo simulations. Assume that we are interested in computing the expectation 
\begin{equation}
\mathbb{E}_{\pi}f = \int_{E} f \mathrm{d}\pi
\end{equation}
of a given test function (henceforth called \emph{observable}) $f:E \rightarrow \mathbb{R}$ with respect to some probability measure $\pi$ on $E$. As approximations relying on quadratures tend to be computationally infeasible in high dimensions, a standard approach is to construct a Markov process $(X_t)_{t \ge 0}$ on $E$ such that
\begin{equation}
\label{eq:ergodicity}
\lim_{T \rightarrow \infty} \frac{1}{T} \int_0^T f(X_s)\, \mathrm{d} s = \int_{E} f \mathrm{d} \pi, 
\end{equation}
i.e. the process $(X_t)_{t \ge 0 }$ is supposed to be ergodic with respect to $\pi$. More generally, one often constructs a Markov process $(\bar{X}_t)_{t \ge 0}$ on an extended state space $\bar{E}$, ergodic with respect to a measure $\bar{\pi}$ that has $\pi$ as its marginal,
\begin{equation}
\int \bar{\pi}(x,y) \, \mathrm{d}y = \pi(x),
\end{equation}
where $(x,y) \in \bar{E}$ and $x \in E$. This idea is used for instance in Hamiltonian Monte Carlo \cite{neal2011mcmc} or sampling schemes based on underdamped Langevin dynamics \cite[Chapter 2]{Free_energy_computations}. We also refer to the introduction in \cite{DNP2017} for a more general perspective. In this work we take this approach further, in the sense that we consider extended measures $\bar{\pi}$ that have fixed marginals with respect to (multiple) complimentary subspaces of $\bar{E}$. Immediately, this viewpoint suggests fruitful connections to theory of optimal (multimarginal) transportation.  

To explain our approach, let us consider $n$ identical copies of $E$, $(X_t)_{t \ge 0}$ and $\pi$, denoted by $E_i$, $(X^i_t)_{t \ge 0}$ and $\pi_i$, for $i \in \{1, \ldots, n\}$. Our main object of study is the class of Markovian couplings $(\bar{X}_t)_{t \ge 0}$ of $\left\{(X_t^i)_{t \ge 0} :\, i \in \{1, \ldots, n\}\right\}$ on the product space $\bar{E} = \prod_{i=1}^n E_i$  that obey certain mild regularity assumptions. In particular, we characterise those couplings in terms of their infinitesimal generators in Section \ref{sec:general framework} (see Proposition \ref{prop:Gamma coupling}). One of the recurring themes of this work is the use of the latter in the analysis of coupled processes. From the coupling property of $(\bar{X}_t)_{t \ge 0}$ it follows immediately that if this process is ergodic, then its invariant measure (denoted by $\bar{\pi}$) is a coupling of the $n$ copies of $\pi$. 

 For an observable $f \in L^1(\pi)$, we can define the extended observable $F:\bar{E} \rightarrow \mathbb{R}$ by 
\begin{equation}
\label{eq:extension observable}
F(x_1,\ldots,x_n) = \frac{1}{n} \sum_{i=1}^n f(x_i).
\end{equation} 
From \eqref{eq:ergodicity} it is then immediate that 
\begin{equation}
\label{eq:extended ergodicity}
\lim_{T \rightarrow \infty} \frac{1}{T} \int_0^T F(\bar{X}_t) \,\mathrm{d} t = \int_{E} f \,\mathrm{d} \pi, 
\end{equation}
i.e. the coupled process $(\bar{X}_t)_{t \ge 0}$ in conjunction with the observable \eqref{eq:extension observable} provides a valid sampling scheme. Let us remark that the framework we develop in Section \ref{sec:general framework} accommodates the case when the spaces $E_i$, the processes $(X^i_t)_{t \ge 0}$ and the measures $\pi_i$ are not identical, allowing for considerable flexibility in the construction of coupled samplers.

The study of observables of the form \eqref{eq:extension observable} provides a compelling dual perspective on the `sum of marginals' operator \eqref{eq:sum of marginals}. Denoting by $\mathcal{B}_b(E) $ the space of bounded measureable functions we can consider the `extension operator' 
\begin{equation}
\label{eq:extension operator}
\Pi^*:\mathcal{B}_b(E) \rightarrow \mathcal{B}_b(\bar{E}), \quad f \mapsto \frac{1}{n} \sum_{i=1}^n f_i,
\end{equation}  
provided by \eqref{eq:extension observable}. For $\bar{\pi} \in \mathcal{P}(\bar{E})$ and $f \in \mathcal{B}_b(E)$ we clearly have $(\Pi \bar{\pi})(f) = \bar{\pi}(\Pi^* f)$, showing that understanding the class of observables given by \eqref{eq:extension observable} is sufficient for analysing the properties of measures of the form \eqref{eq:mean of marginals}. This idea features in particular in Section \ref{ch:coupling_spectral_gap} in the analysis of the exponential convergence to equilibrium for coupled processes. 

Clearly, it is desirable to choose the coupling in such a way that the convergence in \eqref{eq:extended ergodicity} is as fast as possible. Reasonable criteria involve the asymptotic variance (related to appropriate central limit theorems) and the spectral gap (related to the speed of convergence to equilbrium), both of which will be addressed in the present paper. We refer the reader to \cite[Section 1]{DNP2017} for a more detailed discussion of these quantities.  

Similar constructions to ours have been considered in the literature, in particular in a discrete time setting. In \cite{frigessi2000antithetic}, the authors construct coupled Gibbs samplers using a very related rationale (see also \cite{holmes2009antithetic} and \cite{neal1998suppressing}). Coupled Metropolis-Hastings samplers have been put forward in \cite{craiu2007acceleration}. The work \cite{craiu2005} provides a theoretical framework that is however is quite different from the one developed in the present paper. Further algorithmic ideas related to coupled samplers can also be found in \cite{kwak2016antithetic} and \cite{rigat2012parallel}.  

\subsection{Overview of the main results by means of a simple example}
In this section we present our main findings informally by means of a very simple example, pointing to the exact statements in the forthcoming sections. Let us stress that our results hold in much greater generality, in particular also including the recently fashionable piecewise deterministic Markov processes (PDMPs).

Let us consider $n=2$ particles (the locations of which are denoted by $X_t$ and $Y_t$) moving each in one dimension according to the overdamped Langevin dynamics defined by the SDEs 
\begin{subequations}
	\label{eq:running example sdes}
	\begin{align}
	\label{eq:particle x}
	\mathrm{d}X_t & = - V'(X_t)\,\mathrm{d}t + \sqrt{2}\,\mathrm{d}B^x_t, \\
	\label{eq:particle y}
	\mathrm{d}Y_t & = - V'(Y_t)\,\mathrm{d}t + \sqrt{2}\,\mathrm{d}B^y_t,
	\end{align}
\end{subequations}
where $V \in C^{\infty}(\mathbb{R})$ is a fixed potential such that
\begin{equation}
Z:= \int_\mathbb{R} e^{-V} \mathrm{d} x < \infty,
\end{equation}
and $(B_t^x)_{t \ge 0}$, $(B_t^y)_{t \ge 0}$ denote standard one-dimensional Brownian motions. As is well-known, each of these processes considered separately is ergodic with respect to $\pi= \frac{1}{Z} e^{-V} \mathrm{d}x$, i.e. \eqref{eq:ergodicity} holds for an appropriate class of observables.
Note that we have deliberately refrained from stating that the Brownian motions $(B^x_t)_{t \ge 0}$ and $(B^y_t)_{t \ge 0}$ are independent. Indeed, notwithstanding any dependence between these, it is immediate that \eqref{eq:extended ergodicity} holds for the extended observable $F(x,y) = \frac{1}{2} (f(x) + f(y))$, as defined in \eqref{eq:extension observable}.
One of the main objectives of our analysis is to find couplings between $(B^x_t)_{t \ge 0}$ and $(B^y_t)_{t \ge 0}$ such that the induced joint process $(X_t,Y_t)_{t \ge 0}$ has favourable properties, in terms of the asymptotic variance associated to \eqref{eq:extended ergodicity} as well as in terms of convergence to equilibrium of $\Pi(\bar{\pi}_t)$, where $\bar{\pi}_t$ denotes the joint law of $(X_t,Y_t)$ and $\Pi$ has been defined in \eqref{eq:sum of marginals}.
The dependence between $(B_t^x)_{t \ge 0}$ and $(B_t^y)_{t \ge 0}$ can be conveniently encoded in a suitable matrix-valued function $G:\mathbb{R}^2 \rightarrow \mathbb{R}^{2 \times 2}$, writing 
\begin{equation}
\label{eq:G_example}
\mathrm{d}\begin{pmatrix}
X_t	\\
Y_t
\end{pmatrix} =
\begin{pmatrix}
-V'(X_t)  \\
-V'(Y_t) 
\end{pmatrix}\mathrm{d}t + \sqrt{2}G(X_t,Y_t)
\begin{pmatrix}
\mathrm{d}W_t^x \\
\mathrm{d}W_t^y
\end{pmatrix},
\end{equation}
for two independent Brownian motions $(W_t^x)_{t \ge 0}$ and $(W_t^y)_{t \ge 0}$. In this sense, the optimisation problem alluded to above is naturally posed over an appropriate set of  matrix-valued functions.

In Section \ref{sec:general framework} we introduce the general framework, leading to a characterisation of possible couplings in terms of infinitesimal generators of the dynamics. In the present example, the generators of the one-particle dynamics are given by
\begin{equation}
\mathcal{L}_x = -V'(x) \partial_x + \partial_x^2, \quad
\mathcal{L}_y = -V'(y) \partial_y + \partial_y^2.
\end{equation}
The generators of possible couplings $(X_t,Y_t)_{t \ge 0}$ turn out to be of the form
\begin{equation}
\label{eq:L_G example}
\bar{\mathcal{L}}_{\Gamma} := \mathcal{L}_x + \mathcal{L}_y + \Gamma, \quad \Gamma = 2 \alpha \partial_x \partial_y, 
\end{equation}
where $\alpha:\mathbb{R}^2 \rightarrow [-1,1]$ is a function with suitable regularity properties. The connection between $\alpha$ and $G$ will be made precise in Section \ref{ch:coupling_examples}. We use the term `coupling operator' when referring to $\Gamma$ and denote the set of such operators by $\mathcal{G}$. Note that $\Gamma$ as defined in \eqref{eq:L_G example} vanishes on functions that depend either only on $x$ or only on $y$. In Proposition \ref{prop:Gamma coupling} we will see that this property essentially characterises coupling operators in general. As it turns out (see the discussion in Section \ref{sec:ergodic couplings}), not every coupling of ergodic Markov processes is such that the joint process is ergodic. Hence, we introduce the subset $\mathcal{G}^0 \subset \mathcal{G}$ of ergodic coupling operators that do preserve ergodicity. In the present example, $\bar{\mathcal{L}}_\Gamma$ is elliptic whenever $-1 < \alpha < 1$ pointwise and therefore the corresponding coupling operators are ergodic. Intuitively, the nonergodic coupling operators in $\mathcal{G} \setminus \mathcal{G}^0$ can hence be thought of as lying `at the boundary' of $\mathcal{G}^0$. Although we have not been successful in proving a rigorous version of this statement in a general context, the reader is encouraged to keep this picture in mind.

On  $\mathcal{G}^0$  we can consider the map $\Gamma \mapsto \bar{\pi}_\Gamma$, where $\bar{\pi}_\Gamma$ stands for the unique invariant measure associated with $\bar{\mathcal{L}}_\Gamma$. It is immediately clear from the construction that any $\bar{\pi}_\Gamma$ arising in this way is a coupling of $\pi$ to itself (i.e. $\bar{\pi}_\Gamma$ has marginal $\pi$ in both directions). We argue in Section \ref{ch:coupling_asymvar} that a wide range of optimisation problems in our context can be cast in the following form, very closely linked to the theory of optimal transportation,
\begin{equation}
\label{eq:OT problem example}
\min_{\Gamma \in \mathcal{G}^0} \int_{\bar{E}} c \, \mathrm{d}\bar{\pi}_\Gamma,
\end{equation}
where $c$ is an appropriate cost function. Indeed, we show in Section \ref{sec:coupling_CLT} that the task of optimising the asymptotic variance of a coupled process with respect to a given observable is equivalent to \eqref{eq:OT problem example}, for a cost function that is constructed from the solution of a related Poisson equation. 
Addressing the problem \eqref{eq:OT problem example}, we first note that the dependence $\Gamma \mapsto \int_{\bar{E}} c \, \mathrm{d}\bar{\pi}_\Gamma$ is highly nonlinear, in particular, for $\Gamma \in \mathcal{G}^0$ and $\lambda \in [0,1]$, the mapping $\lambda \mapsto \int_{\bar{E}} c \, \mathrm{d}\bar{\pi}_{\lambda \Gamma}$ generally exhibits many local minima and maxima\footnote{ This claim is made assuming that $\lambda \Gamma \in \mathcal{G}^0$ for all $\lambda \in [0,1]$.}. Nevertheless, we find that under suitable conditions the function $\Gamma \mapsto \int_{\bar{E}} c \, \mathrm{d}\bar{\pi}_\Gamma$ does not attain its extrema on interior points. This is the main result of Section \ref{sec:OT} and is stated rigorously in Theorem \ref{thm:max boundary}. In the example under consideration, this implies that optimal couplings necessarily satisfy $\Vert \alpha \Vert_{\infty} = 1$, leading to singular (i.e. degenerately elliptic) generators $\bar{\mathcal{L}}_\Gamma$. This conclusion is interesting in two respects:
Firstly, it complements standard results from optimal transport theory showing that optimal couplings are typically singular in a certain sense. We stress, however, that the problem \eqref{eq:OT problem example} is genuinely different from problems occurring in optimal transport theory, and that our proof uses fundamentally different techniques. Secondly, this result supports the folklore that optimal MCMC samplers use the least amount of noise necessary to guarantee their ergodicity. 

While the results from Section \ref{ch:coupling_asymvar} indicate the possible locations of optimal coupling operators $\Gamma$ in the set $\mathcal{G}$, they do not help to actually find them in practice. In Section \ref{ch:perturbative approach} we address this problem by considering small perturbations around the trivial coupling $\mathbf{0} \in \mathcal{G}$ corresponding to independent Brownian motions. This leads to a much more tractable optimisation problem that can be solved explicitly in concrete examples and gives promising results in our numerical experiments. In the present example, `mirror coupling' ($B_t^x = -B_t^y$) turns out to be optimal in terms of reducing the asymptotic variance of monotone observables, in the sense of the optimisation problem just referred to. However, for different observables (perhaps exhibiting other types of symmetries) more intricate coupling strategies turn out to be advisable. We wish to stress that those observables are of particular relevance for the quantification of uncertainty in a Bayesian framework, for instance in the computation of the variance or related quantities of a posterior distribution.

In Section \ref{ch:coupling_spectral_gap} we analyse the rate of convergence to equilibrium for coupled processes. As we will see, the former can be characterised in terms of an inequality of Poincar{\'e} type that is in turn related to an appropriate Hilbert space constructed in terms of the coupling. Applied to the present example, this result shows that the rate of convergence can be improved relative to the one-particle dynamics if the potential $V$ is symmetric, i.e. $V(x) = V(-x)$. In general, the speed of convergence to equilibrium can also be slower, in the sense that there might appear a constant $C>1$ in front of the exponential decay estimate. We leave a more detailed exploration of this phenomenon for future study.

The structure of the paper is as follows: In Section \ref{sec:general framework}, we introduce our framework in a general setting. In particular, we fix the notation (Section \ref{subsec:notation}), characterise coupled processes in terms of their generators (Section \ref{subsec:coupled processes}), discuss ergodic properties (Section \ref{sec:ergodic couplings}) and provide a means of construction coupling operators given the generators of the marginal processes (Section \ref{sec:general construction}). In Section \ref{ch:coupling_examples}, we illustrate our theory with concrete examples, namely diffusion processes overdamped Langevin dynamics (Section \ref{sec:overdamped_all}), underdamped Langevin dynamics (Section \ref{ex:underdamped}), as well as the zigzag process (Section \ref{sec:zigzag}), representing the class of piecewise deterministic Markov processes. In Section \ref{sec:coupling_CLT} we derive a central theorem for coupled processes. The ensuing expression for the asymptotic variance is connected to the theory of optimal transportation, as exhibited and analysed in Section \ref{sec:OT}. In Section \ref{ch:perturbative approach} we take a perturbative approach towards the solutions of the aforementioned optimal transport problems and exemplify our results in the context of the examples presented in Section \ref{ch:coupling_examples}. Finally, in Section \ref{ch:coupling_spectral_gap} we analyse the convergence of coupled processes to equilibrium relying on a suitable functional inequality of Poincar{\'e} type. The appendix comprises additional material required for some of the proofs throughout the article. 

\section{Coupled processes and coupling operators}
\label{sec:general framework}
This section is devoted to the interplay between couplings of Markov processes and their infinitesimal generators. We start by specifying the setting and notations.
\subsection{Preliminaries, notation and setting}
\label{subsec:notation}
\subsubsection{Feller semigroups}
For a given locally compact Polish space $E$ we will denote the space of bounded, Borel measurable functions by $\mathcal{B}_b(E)$, the space of bounded continuous functions by $C_b(E)$, and the space of continuous functions vanishing at infinity\footnote{Recall that a function $f: E_i \rightarrow \mathbb{R}$ vanishes at infinity if for all $\varepsilon > 0$ there exists a compact set $K \subset E_i$ such that $\vert f(x) \vert \le \varepsilon$ for all $x \in E_i \setminus K$.} by $C_0(E)$. The space of probability measures on $E$ (equipped with the Borel $\sigma$-algebra $\mathcal{B}(E)$) will be denoted by $\mathcal{P}(E)$. All of theses spaces become Banach spaces when equipped with the supremum norm, denoted by $\Vert \cdot \Vert_{\infty}$. An $E$-valued Markov process $(X_t)_{t \ge 0}$ induces a semigroup of linear operators $(S_t)_{t \ge 0}$ on $\mathcal{B}_b(E)$ via
\begin{equation}
(S_t f) (x) = \mathbb{E} [f(X_t) \vert X_0 = x], \quad f \in \mathcal{B}_b(E), \, x \in E.
\end{equation}
Since the terminology varies slightly across the literature, we next give the definition of Feller processes used in this paper, mostly  adopting the notations and conventions from \cite[Chapter 1]{LevyMatters2013}. For more details we furthermore refer to \cite[Chapter 17]{K2002}.
\begin{definition}[Feller processes]
	\label{def:Feller}
	 A Markov process $(X_t)_{t \ge 0}$ satisfies the \emph{Feller property} if the following hold for the corresponding semigroup $(S_t)_{t \ge 0}$:
	\begin{enumerate}
		\item 
		$(S_t)_{t \ge 0}$ leaves $C_0(E)$ invariant, i.e. $S_t f \in C_0(E)$ for all $f \in C_0(E)$ and $t \ge 0$.
		\item
		$(S_t)_{t \ge 0}$ is strongly continuous on $C_0(E)$, i.e. 
		\begin{equation}
		\Vert S_t f - f \Vert_{\infty} \xrightarrow{t \rightarrow 0} 0
		\end{equation} 
		for all $f \in C_0(E)$.
	\end{enumerate}
\end{definition}
Provided that $(S_t)_{t \ge 0}$ is a Feller semigroup as specified above, we define its generator $(\mathcal{L},\mathcal{D}(\mathcal{L}))$ in the usual way \cite{EN2000}[Chapter 2]. Throughout this paper, we will assume for convenience that the state space $E$ has a differential structure such that the space $C_c^{\infty}(E)$ of compactly supported smooth functions is meaningfully defined. We can then make the following assumption on the domain of the generator $\mathcal{L}$:  

\begin{assumption}
	\label{ass:rich}
	All considered Feller processes are rich\footnote{We adopt this terminology following for instance \cite[Section 1.5]{Kuehn2017} and references therein.}, i.e. $C_c^{\infty}(E) \subset \mathcal{D}(\mathcal{L})$.
\end{assumption}

The state spaces encountered in the examples in Section \ref{ch:coupling_examples} naturally admit differentiable structures and the corresponding generators fulfil Assumption \ref{ass:rich}. Let us remark, however, that our framework can be extended to more general scenarios (including for instance infinite dimensional examples), replacing $C_c^{\infty}(E)$ by suitable function spaces adapted to the particular setting.

\begin{remark}
	\label{rem:constants}
	Clearly, $C_0(E)$ does not contain constant functions (apart from the zero function) if $E$ is not compact. In preparation for Definition \ref{def:coupling operators}, we mention that $\mathcal{D}(\mathcal{L})$ can naturally be extended to a subset of $C_b(E)$ by endowing the latter with the topology of uniform convergence on compact subsets of $E$. Following \cite{Sch98} (see also \cite[Section 4.8]{Jac2001vol1}), the extended generator $(\tilde{\mathcal{L}},\mathcal{D}(\tilde{\mathcal{L}}))$ can then be defined by
 		\begin{subequations}
		\begin{align}
		\label{eq:extended generator}
		\mathcal{D}(\tilde{\mathcal{L}}) & = \left\{ f \in C_b(E): \, \lim_{t \rightarrow 0} \frac{S_t f -f}{t}  \quad \text{exists uniformly on compact sets}\right\}, \\
		\tilde{\mathcal{L}} f & = \lim_{t \rightarrow 0} \frac{S_t f -f}{t}, \quad f \in \mathcal{D}(\tilde{\mathcal{L}}).
		\end{align}
	\end{subequations}
Since $(S_t)_{t \ge 0}$ is conservative\footnote{Conservativeness of the semigroup $(S_t)_{t \ge 0}$ means that $S_1 \mathbf{1} = \mathbf{1}$ for all $t \ge 0$, encoding the conservation of total probability mass.} we immediately see that $\mathbf{1} \in \mathcal{D}(\tilde{\mathcal{L}})$ and $\tilde{\mathcal{L}}\mathbf{1} = 0$, i.e. $\tilde{\mathcal{L}}$ vanishes on constant functions. Moreover, $(\tilde{\mathcal{L}},\mathcal{D}(\tilde{\mathcal{L}}))$ is an extension of $(\mathcal{L},\mathcal{D}(\mathcal{L}))$, i.e. $\mathcal{D}(\mathcal{L}) \subset \mathcal{D}(\tilde{\mathcal{L}})$ and $\tilde{\mathcal{L}}\vert_{\mathcal{D}(\mathcal{L})} = \mathcal{L}$. Henceforth we will thus drop the tilde when no confusion is possible.
\end{remark}
\subsubsection{Product spaces}
\label{sec:product spaces}
We will be dealing with a collection of locally compact Polish spaces $E_i$, indexed by $i \in \{1,\ldots, n\}$, and denote their cartesian product by $\bar{E} := E_1 \times \ldots \times E_n$. For $f \in \mathcal{B}_b(E_i)$, it is of course understood that also $f \in \mathcal{B}_b(\bar{E})$, then depending only on the coordinate $x_i$ in $\bar{x} \equiv (x_1, \ldots, x_n)$. To a given function $f \in C_c^{\infty}(E_i)$ or $f \in C_0(E_i)$, we will also associate the canonical element in $\mathcal{B}_b(\bar{E})$, but wish to emphasize that clearly $f$ does not in general have compact support or does not vanish at infinity when considered as a function on $\bar{E}$. Frequently, the spaces $E_i$ will be identical copies of each other, i.e. $\bar{E} = E^n$. Given $f \in \mathcal{B}_b(E)$, we will then write $f_i \in \mathcal{B}_b(\bar{E})$ for the function given by
\begin{equation}
f_i(x_1,\ldots,x_n) = f(x_i), \quad (x_1, \ldots, x_n) \in \bar{E}.
\end{equation}  
Sums of unbounded operators are defined in the usual way: For two operators $(A,\mathcal{D}(A))$ and $(B,\mathcal{D}(B))$ defined on the same Banach space $X$ (i.e. $\mathcal{D}(A) \subset X$ and $\mathcal{D}(B) \subset X$), their sum is defined via
\begin{equation}
(A+B)f := Af + Bf, \quad f \in \mathcal{D}(A+B) := \mathcal{D}(A) \cap \mathcal{D}(B),
\end{equation}
see for instance \cite[Chapter III]{EN2000}.
In the case when $(A,\mathcal{D}(A))$ and $(B,\mathcal{D}(B))$ are defined on two distinct spaces $\mathcal{B}_b(E_i)$ and $\mathcal{B}_b(E_j)$, $i \neq j$,  (i.e. $\mathcal{D}(A) \subset \mathcal{B}_b(E_i)$ and $\mathcal{D}(B) \subset \mathcal{B}_b(E_j)$), their sum is defined as
\begin{equation}
A + B := A \otimes I + I \otimes B, \quad \mathcal{D}(A + B) := \mathcal{D}(A) \widehat{\otimes} \mathcal{D}(B), 
\end{equation}
where $\widehat{\otimes}$ denotes the canonical  topological tensor product on $\mathcal{B}_b(E_i \times E_j)$ (see \cite{AS1957} and \cite[A-I 3.7]{Oneparsempos}).
\subsection{Coupled processes}
\label{subsec:coupled processes}

Assume that for $i \in \{1,\ldots,n\}$, we are given locally compact Polish spaces $E_i$, representing the state spaces of $n$ distinct particles.
Furthermore, for $i\in \{1, \ldots, n\}$, let us fix Feller semigroups $(S^i_t)_{t \ge 0}$ on $E_i$ with generators $(\mathcal{L}_i,\mathcal{D}(\mathcal{L}_i))$ and associated Feller processes $(X^i_t)_{t \ge 0}$ on appropriate stochastic bases $(\Omega_i,\mathbb{P}_i,(\mathcal{F}^i_t)_{t\ge 0})$, representing the dynamics of those particles (in the following these processes will be referred to as the `one-particle dynamics'). Let us also assume that the spaces $C_c^{\infty}(E_i)$ are cores for the semigroups $(S_t^i)_{t \ge 0}$.
\begin{remark}
	By Watanabe's Theorem (see for instance \cite[Proposition 17.9]{K2002}), $C_c^\infty(E_i)$ is a core for $\mathcal{L}_i$ if it is dense in $\mathcal{D}(\mathcal{L}_i)$ and invariant under $(S^i_t)_{t \ge 0}$. It is possible to extend our framework by exchanging $C_c^{\infty}(E_i)$ for other cores, say $\mathcal{D}_i$, as long as the first condition in Defintion \ref{def:coupling operators} is altered accordingly.  
\end{remark} 
Consider now a Feller process $(\bar{X}_t)_{t\ge 0}$ on the product space $\bar{E} := E_1 \times \ldots \times E_n$, together with its associated semigroup $(\bar{S}_t)_{t \ge 0}$ on $\mathcal{B}_b(\bar{E})$ and generator $(\bar{\mathcal{L}},\mathcal{D}(\bar{\mathcal{L}}))$ in $C_0(\bar{E})$. We will denote the $E_i$-valued coordinate processes of $(\bar{X}_t)_{t \ge 0}$ by $(\bar{X}_t^i)_{t \ge 0}$. 
\begin{definition}[Feller couplings]
	\label{def:coupling}
	The process $(\bar{X}_t)_{t \ge 0}$ is called a \emph{Feller coupling} of the processes $(X^i_t)_{t \ge 0 }$, if  its marginals are given by these processes, i.e. if for all $i \in \{1,\ldots,n\}$, the processes $(\bar{X}^i_t)_{t \ge 0}$ and $(X_t^i)_{t \ge 0}$ induce the same law on the space of c{\`a}dl{\`a}g functions $D([0,\infty),E_i)$ \footnote{Every Feller process has a c{\`a}dl{\`a}g modification, see \cite[Theorem 1.19]{LevyMatters2013}.}.  
\end{definition}
 Our aim in this section is to characterise the infinitesimal generators of such coupled processes. 
\begin{remark}
	\label{rem:Feller coupling}
	We are making the following two assumptions when considering the class of processes described above. Firstly, we assume certain continuity properties of the process $(\bar{X}_t)_{t \ge 0}$, encoded mainly in the fact that the space $C_0(\bar{E})$ is invariant under the action of the corresponding semigroup (see for instance \cite[Lemma 1.4]{LevyMatters2013} for more details). Restricting our attention to the class of Feller processes allows us to use the theory of strongly continuous semigroups on Banach spaces \cite{EN2000} for the development of the theory in this section. In examples and applications however (see Sections \ref{ch:coupling_examples} and \ref{ch:perturbative approach}), we will relax this assumption a bit, allowing for more general processes.
	
	 Secondly, we consider processes $(\bar{X}_t)_{t \ge 0}$ that are Markovian. Obviously there are many non-Markovian couplings of the underlying processes $(X_t^i)_{t \ge 0}$ and indeed those might be  of particular interest for applications. Hence we plan to investigate the possibility of extending our framework in this direction in a forthcoming project. 
\end{remark}

We now proceed to introduce a class of linear (unbounded) operators $(\Gamma, \mathcal{D}(\Gamma))$ on $\mathcal{B}_b(\bar{E})$:
\begin{definition}[Coupling operators]
	\label{def:coupling operators}
	Let $(\Gamma,\mathcal{D}(\Gamma))$ be a (possibly unbounded) linear operator on $\mathcal{B}(\bar{E})$. Then $(\Gamma,\mathcal{D}(\Gamma))$ is called a \emph{coupling operator} if the following conditions are satisfied:
	\begin{enumerate}
		\item
		\label{it:kernel} 
		Test functions that depend on only one component of $\bar{x} = (x_1,\ldots,x_n)$ are in the kernel of $\Gamma$: 
		
		For all $i \in \{1,\ldots,n\}$ and $f \in C_c^{\infty}(E_i)$ it holds that $f \in \mathcal{D}(\Gamma)$ and 
			\begin{equation}
		\label{eq:kernel property}
		\Gamma f = 0.
		\end{equation}
	\item
	\label{it:generator}  
	 The operator  
	\begin{equation}
	\label{eq:generator}
	\bar{\mathcal{L}}_{\Gamma} := \sum_{i=1}^{n} \mathcal{L}_{i} + \Gamma 
	\end{equation}
	with domain $\mathcal{D}(\bar{\mathcal{L}}_\Gamma) = \bigotimes_{i=1}^n \mathcal{D}(\mathcal{L}_i) \cap \mathcal{D}(\Gamma)$ is closable and its closure
	is the infinitesimal generator of a Feller process on $\bar{E}$.
	\end{enumerate}
The Feller semigroup corresponding to a coupling operator $\Gamma$ will be referred to by $(\bar{S}_t^\Gamma)_{t \ge 0}$. Furthermore, the set of coupling operators will be denoted by $\mathcal{G}$, i.e.
\begin{equation*}
\mathcal{G} = \{\Gamma:\mathcal{D}(\Gamma) \subset \mathcal{B}_b(\bar{E}) \rightarrow \mathcal{B}_b(\bar{E}): \, \text{Conditions \ref{it:kernel} and \ref{it:generator} are satisfied.}\}.
\end{equation*}
\end{definition}

\begin{remark}
	\label{rem:domain gamma}
	We will not distinguish (notationally) between $\bar{\mathcal{L}}_\Gamma$ and its closure. Notice also that the first condition in Definition \ref{def:coupling operators} necessitates to think of $\Gamma$ as an operator defined on (a subspace) of $C_b(\bar{E})$ (rather than $C_0(\bar{E})$), because of $C_c^{\infty} (E_i) \not\subset C_0(\bar{E})$. The second condition is naturally concerned with $\bar{\mathcal{L}}_\Gamma$ being the generator of a semigroup on $C_0(\bar{E})$ (and hence with the appropriate restriction of $\Gamma$). We refer to Remark \ref{rem:constants} for a discussion about the extended generator on $C_b(\bar{E})$.
\end{remark}

 We have the following result, characterising completely the set of rich Feller couplings in terms of the coupling operators $\mathcal{G}$:
\begin{proposition}
	\label{prop:Gamma coupling}
	For any $\Gamma \in \mathcal{G}$, the Feller process generated by $\bar{\mathcal{L}}_{\Gamma}$ as defined in \eqref{eq:generator} is a coupling of the processes $\left( (X^i_t)_{t \ge 0 }, \, i \in \{1,\ldots n\} \right)$. Conversely, if $(\bar{X}_t)_{t \ge 0}$ is a Feller coupling of the processes $\left( (X^i_t)_{t \ge 0 }, \, i \in \{1,\ldots n\} \right)$, then its generator is of the form \eqref{eq:generator}, with $\Gamma \in \mathcal{G}$.
\end{proposition}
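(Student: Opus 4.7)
The plan is to prove the two directions of Proposition \ref{prop:Gamma coupling} separately, leveraging the infinitesimal characterisation of Feller dynamics. For the forward direction, given $\Gamma \in \mathcal{G}$ with semigroup $(\bar{S}_t^\Gamma)_{t \ge 0}$ and associated process $(\bar{X}_t)_{t \ge 0}$, fix $i \in \{1, \ldots, n\}$ and take $f \in C_c^{\infty}(E_i)$ canonically lifted to $\mathcal{B}_b(\bar{E})$ as in Section \ref{sec:product spaces}. Assumption \ref{ass:rich} ensures $f \in \mathcal{D}(\mathcal{L}_i) \subset \mathcal{D}(\bar{\mathcal{L}}_\Gamma)$, and a direct computation yields $\bar{\mathcal{L}}_\Gamma f = \mathcal{L}_i f$: the terms $\mathcal{L}_j f$ for $j \neq i$ vanish by the tensor-product convention of Section \ref{sec:product spaces} combined with the conservativeness identity $\mathcal{L}_j \mathbf{1} = 0$ from Remark \ref{rem:constants}, while $\Gamma f = 0$ is the kernel property \ref{it:kernel}.

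Next I would establish $(\bar{S}_t^\Gamma f)(\bar{x}) = (S_t^i f)(x_i)$ for all $f \in C_c^\infty(E_i)$ by a Cauchy-problem comparison: both $u(t,\bar{x}) := (\bar{S}_t^\Gamma f)(\bar{x})$ and $v(t,\bar{x}) := (S_t^i f)(x_i)$ have initial value $f$ and satisfy $\partial_t w = \bar{\mathcal{L}}_\Gamma w$. For $v$ this uses the core property of $C_c^\infty(E_i)$ in $\mathcal{D}(\mathcal{L}_i)$ together with closedness of $\bar{\mathcal{L}}_\Gamma$ to propagate the identity $\Gamma \varphi = 0$ from the core to $\varphi = S^i_t f$. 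A standard density and bounded-convergence argument then extends the identity to $f \in C_0(E_i)$. Because $\mathbb{E}_{\bar{x}}[f(\bar{X}^i_t)] = (S^i_t f)(x_i)$ depends only on $x_i$, iterating the Markov property of $(\bar{X}_t)$ shows that the finite-dimensional distributions of $(\bar{X}^i_t)_{t \ge 0}$ coincide with those of $(X^i_t)_{t \ge 0}$ under any common initial law, yielding the required equality of laws on $D([0,\infty), E_i)$ demanded by Definition \ref{def:coupling}.

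For the converse, given a Feller coupling $(\bar{X}_t)_{t \ge 0}$ with generator $(\bar{\mathcal{L}}, \mathcal{D}(\bar{\mathcal{L}}))$, I would define $\Gamma := \bar{\mathcal{L}} - \sum_{i=1}^n \mathcal{L}_i$ on $\bigotimes_i \mathcal{D}(\mathcal{L}_i)$. For $f \in C_c^\infty(E_i)$ canonically lifted to $\bar{E}$, Definition \ref{def:coupling} yields $\bar{S}_t f = S_t^i f$ pointwise on $\bar{E}$; differentiating at $t = 0$ in the extended-generator sense of Remark \ref{rem:constants} produces $\bar{\mathcal{L}} f = \mathcal{L}_i f$, and subtracting the vanishing contributions $\mathcal{L}_j f$ for $j \neq i$ gives $\Gamma f = 0$, which is exactly property \ref{it:kernel}. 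Property \ref{it:generator} holds by construction, since $\bar{\mathcal{L}}_\Gamma$ and $\bar{\mathcal{L}}$ then agree on the stipulated domain and $\bar{\mathcal{L}}$ generates the Feller coupling semigroup.

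The main obstacle I expect is the domain bookkeeping between the Feller generator on $C_0(\bar{E})$ and its extension to $C_b(\bar{E})$ equipped with uniform convergence on compacta (Remark \ref{rem:constants}): a function in $C_c^\infty(E_i)$ need not vanish at infinity when lifted to $\bar{E}$, so identities such as $\bar{\mathcal{L}}_\Gamma f = \mathcal{L}_i f$ have to be interpreted at this extended level. Propagating the kernel property from the core $C_c^\infty(E_i)$ to all functions of the form $S^i_t f$, which is needed to run the Cauchy-problem uniqueness step, is where Assumption \ref{ass:rich}, the core assumption on $\mathcal{L}_i$, and the closedness of $\bar{\mathcal{L}}_\Gamma$ are all used in an essential way. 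Once these technicalities are settled, both implications follow from the semigroup machinery assembled in Section \ref{sec:general framework}.
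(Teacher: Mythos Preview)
Your proposal is correct in outline but takes a genuinely different route from the paper in the forward direction, and the comparison is instructive.

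For the forward implication, the paper does not attempt to establish $\bar{S}_t^\Gamma f = S_t^i f$ via Cauchy-problem uniqueness. Instead, it observes that $\bar{\mathcal{L}}_\Gamma f = \mathcal{L}_i f$ on $C_c^\infty(E_i)$ (which is the easy computation you also do) and immediately concludes that
\[
f(\bar{X}_t) - f(\bar{X}_0) - \int_0^t (\mathcal{L}_i f)(\bar{X}^i_s)\,\mathrm{d}s
\]
is a martingale; thus $(\bar{X}^i_t)_{t\ge 0}$ solves the martingale problem for $(\mathcal{L}_i, C_c^\infty(E_i))$, and well-posedness of this martingale problem (granted by the core assumption) identifies the marginal law in one stroke. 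This completely bypasses the obstacle you flag: there is no need to propagate $\Gamma\varphi = 0$ from the core to $\varphi = S_t^i f$, and no need for Cauchy-problem uniqueness in $C_b(\bar{E})$ with the compact-open topology. Those technicalities are real, since the lifts of $C_c^\infty(E_i)$ lie outside $C_0(\bar{E})$ and closedness of the extended generator in the relevant topology is not standard; your approach can presumably be pushed through, but the martingale-problem argument is the economical one here.

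For the converse, your route is essentially the paper's. The paper also starts from $\tilde{S}_t f = S_t^i f$ (a consequence of Definition~\ref{def:coupling}) and differentiates to place $f$ in the extended domain with $\tilde{\mathcal{L}} f = \mathcal{L}_i f$. The paper then gives a separate martingale argument to deduce $\tilde{\Gamma} f = 0$, whereas your direct subtraction $\tilde{\Gamma} f = \tilde{\mathcal{L}} f - \sum_j \mathcal{L}_j f = \mathcal{L}_i f - \mathcal{L}_i f = 0$ reaches the same conclusion more transparently. Both are valid; yours is shorter.
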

\begin{proof}
	Let $\Gamma \in \mathcal{G}$, and consider the process $(\bar{X}_t)_{t \ge 0}$ generated by the corresponding operator $\bar{\mathcal{L}}_{\Gamma}$ as defined in \eqref{eq:generator}. Let us fix $i \in \{1,\ldots,n\}$. Clearly, $C_c^{\infty}(E_i) \in \mathcal{D}(\bar{\mathcal{L}}_\Gamma)$ and $\bar{\mathcal{L}}_\Gamma f = \mathcal{L}_i f$ for $f \in C^{\infty}_c(E_i)$. Hence, for all $f\in C_c^{\infty}(E_i)$, the process
	\begin{equation}
	 f(\bar{X}_t) - f(\bar{X}_0) - \int_0^t (\mathcal{L}_i f)(\bar{X}_s) \,\mathrm{d}s, \quad t \ge 0,
	\end{equation}
	is a martingale with respect to the natural filtration $(\mathcal{F}^{\bar{X}}_t)_{t \ge 0}$ generated by $(\bar{X}_t)_{t \ge 0}$.  From the uniqueness of the martingale problem for the generator $\mathcal{L}_i$ (see for instance \cite[Section 4.4]{EthierKu86}) and the fact that $C_c^{\infty}(E_i)$ is a core for $\mathcal{L}_i$, it follows that  $(\bar{X}^i_t)_{t \ge0}$ has indeed the same law as $(X^i_t)_{t \ge 0}$. 
	
	Conversely, assume that $(\bar{X}_t)_{t \ge 0}$ is a Feller coupling of the processes 
	$\left( (X^i_t)_{t \ge 0 }, \, i \in \{1,\ldots n\} \right)$ and denote its generator by $(\tilde{\mathcal{L}},\mathcal{D}(\mathcal{\tilde{L}}))$. For fixed $i \in \{1, \ldots, n\}$, we first argue that $C^{\infty}_c(E_i) \subset \mathcal{D}(\tilde{\mathcal{L}})$, referring to the domain of the extended generator defined in \eqref{eq:extended generator}. Indeed, this amounts to showing that for all $f \in C_c^{\infty}(E_i)$ the limit 
	\begin{equation}
	\label{eq:generator limit}
	\lim_{t \rightarrow 0}\frac{1}{t}(\tilde{S}_t f - f)
	\end{equation}
	exists uniformly on compact sets. By the coupling property (Definition \ref{def:coupling}), we have that $\tilde{S}_t f = S_t^i f$ for $f \in C_c^{\infty}(E_i)$. Therefore (and since $C_c^{\infty}(E_i) \subset \mathcal{D}(\mathcal{L}_i)$ by assumption), it follows that the limit \eqref{eq:generator limit} even exists uniformly on the whole of $\bar{E}$.
	
	We can now define $\tilde{\Gamma} := \tilde{\mathcal{L}} - \sum_{i = 1}^n \mathcal{L}_i$ on $\mathcal{D}(\tilde{\Gamma}) :=\mathcal{D}(\tilde{\mathcal{L}}) \cap \bigotimes_{i=1}^n\mathcal{D}(\mathcal{L}_i)$.
	It is then sufficient to show that $\Gamma$ satisfies the first condition of Definition \ref{def:coupling operators}. To this end, take  $f \in C_c^{\infty}(E_i)$ in the martingale problem for $\tilde{\mathcal{L}}$ to see that 
	\begin{equation}
	f(\bar{X}^i_t) - f(\bar{X}^i_0) - \int_0^t (\mathcal{L}_{i} f)(\bar{X}^i_s) \,\mathrm{d}s - \int_0^t (\tilde{\Gamma} f) (\bar{X}_s) \,\mathrm{d}s, \quad t \ge 0,
	\end{equation}
	is a martingale, again with respect to the natural filtration $(\mathcal{F}^{\bar{X}}_t)_{t \ge 0}$ generated by $(\bar{X}_t)_{t \ge 0}$. 
	Since $(\bar{X}_t^i)_{t \ge 0}$ and $(X_t^i)_{t \ge 0}$ are equal in law by assumption, it follows that $((\bar{X}^i_t)_{t \ge 0}, (\mathcal{F}^{\bar{X}}_t)_{t \ge 0})$ is a solution to the martingale problem for $\mathcal{L}_i$. Hence, $\int_0^t (\tilde{\Gamma} f) (\bar{X}_s) \mathrm{d}s$ has to be a martingale as well. Since this process is of finite variation (and the initial condition for the process $(\bar{X}_t)_{t \ge 0}$ can be chosen arbitrarily), this implies $\tilde{\Gamma} f =0$. 
\end{proof}
\begin{remark}
	Similar approaches, describing couplings in terms of coupling operators, are known from the literature. See for instance \cite[Chapter 2, Definition 2.7]{CM2005} and references therein. The exact result of Proposition \ref{prop:Gamma coupling} and its proof using martingale problems seems to be new and in particular relevant for Conjecture 2.18 and Open Problem 2.19 in \cite{CM2005}. 
\end{remark}
\begin{example}
	\label{ex:independent coupling}
	\emph{Independent (or trivial) coupling: }The zero operator $\Gamma = \mathbf{0}$ is always in $\mathcal{G}$, as the conditions of Definition \ref{def:coupling operators} clearly hold. Indeed, consider the operator 
	\begin{equation}
	\label{eq:L0 def}
	\bar{\mathcal{L}}_0 := \sum_{i=1}^n \mathcal{L}_i
	\end{equation}
	 on the domain $\mathcal{D}(\bar{\mathcal{L}}_0):= \widehat{\bigoplus}_i \mathcal{D}(\mathcal{L}_i)$. It is  straightforward (see for instance \cite[A-I 3.7]{Oneparsempos}) to show that $\bar{\mathcal{L}}_0$ is the generator of a Feller semigroup $(\bar{S}^0_t)_{t \ge 0}$ given by
	\begin{equation}
	\label{eq:independent semigroup}
	\bar{S}^0_t f = \left(\prod_{i=1}^n S^i_t\right) f,\quad f \in \mathcal{B}_b(\bar{E}),\, t \ge 0,
	\end{equation}
	and that the associated Feller process is just $(\bar{X}^0_t)_{t \ge 0} = (X_t^1, \ldots, X_t^n)_{t \ge 0}$, i.e. it is obtained from independent copies of the underlying processes.  
\end{example}
Let us briefly discuss some of the implications of the conditions in Definition \ref{def:coupling operators}. As can be seen from the proof of Proposition \ref{prop:Gamma coupling}, the first condition is instrumental in guaranteeing that the coupled process $(\bar{X}_t)_{t \ge 0}$ has the correct marginals. To put the second condition into context, we remark that generators of Feller semigroups can be characterised by means of the Hille-Yosida-Ray theorem in terms of the positive maximum principle (see \cite[Lemma 1.28]{LevyMatters2013} and
\cite[Theorem 1.30]{LevyMatters2013}). As we will see in Examples (Section \ref{ch:coupling_examples}), the latter often restricts the `size' of coupling operators, so that the set $\mathcal{G}$ usually turns out to be `bounded' in a certain sense. Let us close this section by mentioning the following conjecture:
\begin{conjecture}
	\label{conj:G convex}
	The set $\mathcal{G}$ is convex.
\end{conjecture}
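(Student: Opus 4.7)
Fix $\Gamma_1, \Gamma_2 \in \mathcal{G}$ and $\mu \in [0,1]$, and set $\Gamma := \mu \Gamma_1 + (1-\mu)\Gamma_2$ on $\mathcal{D}(\Gamma) := \mathcal{D}(\Gamma_1) \cap \mathcal{D}(\Gamma_2)$. On the domain $\mathcal{D}(\bar{\mathcal{L}}_\Gamma) = \bigotimes_{i=1}^n \mathcal{D}(\mathcal{L}_i) \cap \mathcal{D}(\Gamma)$ we have the algebraic identity
\begin{equation}
\bar{\mathcal{L}}_\Gamma \;=\; \mu \, \bar{\mathcal{L}}_{\Gamma_1} + (1-\mu) \, \bar{\mathcal{L}}_{\Gamma_2},
\end{equation}
and Condition \ref{it:kernel} of Definition \ref{def:coupling operators} is immediate by linearity: if $f \in C_c^{\infty}(E_i)$ then $\Gamma_j f = 0$ for $j = 1,2$, hence $\Gamma f = 0$. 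All the work is therefore concentrated in verifying Condition \ref{it:generator}, namely that the closure of $\bar{\mathcal{L}}_\Gamma$ generates a Feller semigroup on $C_0(\bar{E})$.

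The plan is to invoke the Hille--Yosida--Ray theorem (\cite[Theorem 1.30]{LevyMatters2013}), by which a densely defined operator on $C_0(\bar{E})$ generates a conservative Feller semigroup if and only if it satisfies the positive maximum principle (PMP) and has dense range $\Ran(\lambda - A)$ in $C_0(\bar{E})$ for some $\lambda > 0$. Density of $\mathcal{D}(\bar{\mathcal{L}}_\Gamma)$ in $C_0(\bar{E})$ is inherited from the inclusion of $C_c^{\infty}(E_1) \otimes \cdots \otimes C_c^{\infty}(E_n)$ (which is dense by Assumption \ref{ass:rich} and Stone--Weierstrass). The PMP is manifestly stable under convex combinations: if $f \in \mathcal{D}(\bar{\mathcal{L}}_\Gamma)$ attains a nonnegative maximum at $\bar{x}_0 \in \bar{E}$, then $(\bar{\mathcal{L}}_{\Gamma_j} f)(\bar{x}_0) \le 0$ for $j=1,2$, so the convex combination $(\bar{\mathcal{L}}_\Gamma f)(\bar{x}_0) \le 0$. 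These two ingredients alone already yield dissipativity and closability of $\bar{\mathcal{L}}_\Gamma$.

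The main obstacle, and the reason for the conjectural status, is the range condition: density of $\Ran(\lambda - \bar{\mathcal{L}}_\Gamma)$ does \emph{not} follow in any direct algebraic way from the analogous statements for $\bar{\mathcal{L}}_{\Gamma_1}$ and $\bar{\mathcal{L}}_{\Gamma_2}$ taken separately. My strategy for it is probabilistic/Chernoff-theoretic. Consider the alternating compositions
\begin{equation}
T^{(n)}_t f \;:=\; \left(\bar{S}^{\Gamma_1}_{\mu t/n} \, \bar{S}^{\Gamma_2}_{(1-\mu) t/n}\right)^n f, \qquad f \in C_0(\bar{E}),\; t \ge 0,
\end{equation}
each of which is a positivity-preserving contraction on $C_0(\bar{E})$ and corresponds probabilistically to running the two coupled dynamics in rapid succession. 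If one can exhibit a common core $\mathcal{C} \subset \mathcal{D}(\bar{\mathcal{L}}_\Gamma)$ on which $s^{-1}(T^{(1)}_s f - f) \to \bar{\mathcal{L}}_\Gamma f$ uniformly as $s \to 0$ and which satisfies the stability hypotheses of Chernoff's product formula (see \cite[Corollary III.5.3]{EN2000} and the Trotter--Kato framework in \cite[Chapter 1]{EthierKu86}), then the sequence $T^{(n)}_t$ converges strongly to a Feller semigroup $(T_t)_{t \ge 0}$ whose generator extends $\bar{\mathcal{L}}_\Gamma$; combined with the PMP above this forces equality of the generators and delivers the range condition as a by-product.

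The hardest step is the construction of such a common core $\mathcal{C}$: without additional structural information about how $\mathcal{D}(\Gamma_1)$ and $\mathcal{D}(\Gamma_2)$ interact with the semigroups $\bar{S}^{\Gamma_1}, \bar{S}^{\Gamma_2}$ (e.g. mutual invariance, or relative boundedness of $\Gamma_1 - \Gamma_2$), the Chernoff stability hypotheses are not obviously satisfied, and it is here that I expect a genuine difficulty. For all concrete examples treated in Section \ref{ch:coupling_examples} this obstacle evaporates because convex combinations preserve the structural form of the generator --- positive semidefiniteness of diffusion matrices is preserved under convex combinations, as is the positivity of jump intensities for the zigzag PDMP --- so Conjecture \ref{conj:G convex} reduces in those cases to the standard well-posedness theory for SDEs and PDMPs.
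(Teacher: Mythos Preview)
The statement you are attempting to prove is labelled a \emph{conjecture} in the paper, and the paper gives no proof of it whatsoever: immediately after stating it, the authors write that ``resolving the above conjecture would shed further light on the structure of $\mathcal{G}$'' and move on. So there is no ``paper's own proof'' to compare against.

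Your proposal is not a proof either, and to your credit you do not pretend otherwise. You correctly isolate the two easy pieces --- the kernel condition is linear, and the positive maximum principle is preserved under convex combinations --- and correctly identify the range condition in Hille--Yosida--Ray as the genuine obstruction. Your Chernoff/Trotter--Kato strategy is a natural line of attack, but as you yourself note, it requires a common core that is stable under both semigroups, and in the generality of Definition~\ref{def:coupling operators} no such core is handed to you. This is precisely why the statement remains a conjecture: the abstract framework places no structural constraints on $\mathcal{D}(\Gamma_1)$ and $\mathcal{D}(\Gamma_2)$ beyond containing each $C_c^\infty(E_i)$, and that is far too weak to run the approximation argument.

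Your closing remark --- that in all the concrete examples of Section~\ref{ch:coupling_examples} convexity is immediate because the relevant constraint sets (positive semidefinite diffusion matrices, nonnegative switching rates bounded by a minimum) are themselves convex --- is accurate and is the right way to think about why the conjecture is plausible. But it also shows why the abstract statement is hard: those examples succeed by bypassing the range condition entirely, appealing instead to direct well-posedness results for the resulting SDE or PDMP, and no such shortcut exists at the level of generality of $\mathcal{G}$.
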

Resolving the above conjecture would shed further light on the structure of $\mathcal{G}$, especially in connection with the results obtained in Section \ref{ch:coupling_asymvar}.
\subsection{Ergodicity and regularity of couplings}
\label{sec:ergodic couplings}

From here on, let us make the following assumption, natural in the context of MCMC samplers:
\begin{assumption}
	\label{ass:marginal ergodicity}
	The underlying one-particle processes $(X^i_t)_{t \ge 0}$ are ergodic, i.e. for every $i \in \{1, \ldots, n\}$ there exists a unique probability measure $\pi_i \in \mathcal{P}(E_i)$ on $E_i$ such that
	\begin{equation}
	\int_{E_i} (\mathcal{L}_i f) \, \mathrm{d} \pi_i = 0, \quad f \in \mathcal{D}(\mathcal{L}_i),
	\end{equation}
	and, furthermore,
	\begin{equation}
	\label{eq:ergodic averages}
	\lim_{T \rightarrow \infty} \frac{1}{T} \int_0^T f(X_t^i) \, \mathrm{d}t = \int_{E_i} f \, \mathrm{d}\pi_i, \quad f \in C_b(E_i).
	\end{equation}
\end{assumption}   
Following up on Example \eqref{ex:independent coupling}, we see that the semigroup $(\bar{S}^0_t)_{t \ge 0}$ as given in \eqref{eq:independent semigroup} with generator $\bar{\mathcal{L}}_0$ as defined in \eqref{eq:L0 def} is ergodic with respect to the product measure
\begin{equation}
\bar{\pi}_0 := \bigotimes_{i=1}^n \pi_i
\end{equation} 
on $\bar{E}$. 
Unfortunately, it turns out that not all coupling operators $\Gamma \in \mathcal{G}$ induce ergodic coupled processes, even under the Assumption \ref{ass:marginal ergodicity} (for an example, see \cite[Section 3.1]{LPP2015}). We therefore make the following definition:
\begin{definition}
	\emph{Ergodic couplings:}
	A coupling operator $\Gamma \in \mathcal{G}$ is called \emph{ergodic}, if the Feller process generated by $\bar{\mathcal{L}}_{\Gamma}$ is ergodic. The corresponding subset of ergodic coupling operators will be denoted by $\mathcal{G}^0$. The unique invariant measure associated to $\Gamma \in \mathcal{G}^0$ will be denoted by $\bar{\pi}_{\Gamma}$.
\end{definition}
\begin{remark}
	By construction, the measures $\bar{\pi}_{\Gamma}$ are couplings of the one-particle invariant measures $(\pi_i)_{i=1}^n$.
\end{remark}
\begin{remark}
	\label{rem:practical ergodicity}
	For the analysis, ergodicity of the coupling is a crucial requirement (although with more work it might be possible to extend some of the results to the case when ergodicity fails to hold). Let us emphasize however that the validity of \eqref{eq:extended ergodicity} does not depend on this, as only the marginal property of the coupling is used in its derivation. Hence in practice it is harmless to use nonergodic couplings, and in fact our results obtained in Section \ref{ch:coupling_asymvar} (in particular, Theorem \ref{thm:max boundary}) suggest using couplings that are at least not straightforwardly seen to be ergodic. In this case, quantities measuring the performance of the sampler (such as the asymptotic variance corresponding to certain observables) might be undefined or depend on the initial condition.
\end{remark}
In general, ergodicity might fail in various ways. For instance, the process might not admit any invariant measure at all, or convergence of ergodic averages (in the sense of \eqref{eq:ergodic averages}) might not hold. The following result shows that the situation is simpler in our context.
\begin{lemma}
	\label{lem:coupled ergodicity}
	Let Assumption \ref{ass:marginal ergodicity} be satisfied. Then the following hold:
	\begin{enumerate}
		\item 
		Every Feller coupling admits at least one invariant measure.
		\item 
		If a Feller coupling admits a unique invariant measure, then the process is ergodic, i.e. \eqref{eq:ergodic averages} holds.
	\end{enumerate}
\end{lemma}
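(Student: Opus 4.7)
Both parts follow from the classical Krylov--Bogolyubov scheme, with tightness on the product space supplied by the marginal ergodicity in Assumption \ref{ass:marginal ergodicity}. The Feller property built into Definition \ref{def:coupling operators} makes the standard machinery directly applicable, and Proposition \ref{prop:Gamma coupling} guarantees that the coordinate processes of the coupling agree in law with the one-particle processes.

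For part (1), I would fix an arbitrary initial point $\bar{x}_0 = (x_1,\ldots,x_n) \in \bar{E}$ and form the Ces\`aro-averaged laws $\nu_T := \frac{1}{T}\int_0^T (\bar{S}_t)^* \delta_{\bar{x}_0}\, \mathrm{d}t$. By the coupling property, the $i$th marginal of $\nu_T$ equals $\frac{1}{T}\int_0^T \mathrm{Law}(X_t^i \mid X_0^i = x_i)\, \mathrm{d}t$, and Assumption \ref{ass:marginal ergodicity} combined with bounded convergence (applied to $f \in C_b(E_i)$) shows that this marginal time average converges weakly to $\pi_i$; in particular each marginal family is tight. Tightness of marginals on a product of Polish spaces lifts to tightness of the joint laws, so Prokhorov extracts a weakly convergent subsequence $\nu_{T_k} \to \bar{\pi}$, and a standard Krylov--Bogolyubov identity (using the Feller property to pass $\bar{S}_s^*$ through the weak limit) identifies $\bar{\pi}$ as an invariant measure for $(\bar{S}_t)_{t \ge 0}$.

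For part (2), suppose that the invariant measure $\bar{\pi}$ is unique. The tightness established above, combined with uniqueness, forces the whole family $\{\nu_T\}_{T > 0}$ to converge weakly to $\bar{\pi}$, since every subsequential weak limit is invariant and hence equal to $\bar{\pi}$. To upgrade this Ces\`aro convergence to the pathwise ergodic averages in \eqref{eq:ergodic averages}, I would start the process from the stationary distribution $\bar{\pi}$ and apply Birkhoff's ergodic theorem: for Markov processes, uniqueness of the invariant measure is equivalent to the shift-invariant $\sigma$-algebra on path space being $\bar{\pi}$-trivial, so Birkhoff yields $\bar{\pi}$-almost sure convergence of $\frac{1}{T}\int_0^T f(\bar{X}_t)\, \mathrm{d}t$ to $\int f\, \mathrm{d}\bar{\pi}$ for every $f \in L^1(\bar{\pi})$.

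The main obstacle will be pinning down the precise mode of convergence intended in \eqref{eq:ergodic averages} and matching it in the coupled setting. The tightness step in part (1) is routine once one observes that any reasonable interpretation of Assumption \ref{ass:marginal ergodicity} (almost sure, in probability, or in mean) delivers weak convergence of the marginal time averages via bounded convergence. What requires more care is extending the Birkhoff conclusion from the stationary initial law to arbitrary initial conditions, which in general demands extra regularity such as the strong Feller property or Harris recurrence; reading the conclusion of the lemma in the same mode as Assumption \ref{ass:marginal ergodicity} sidesteps this issue and completes the scheme.
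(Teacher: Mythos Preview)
Your proposal is correct and follows essentially the same Krylov--Bogolyubov route as the paper: tightness of the marginal Ces\`aro averages (from Assumption \ref{ass:marginal ergodicity}) lifts to tightness of the coupled Ces\`aro averages, Prokhorov plus the Feller property yields an invariant limit, and for part (2) uniqueness forces every subsequential limit to coincide. The paper invokes \cite[Lemma 4.4]{V2009} for the tightness-lifting step and stops at weak convergence of the averaged laws, whereas you go a step further by invoking Birkhoff under stationarity and explicitly flagging the gap between this and convergence from arbitrary initial conditions; your caution about the intended mode of convergence in \eqref{eq:ergodic averages} is well placed, as the paper's own proof does not address the pathwise statement either.
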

\begin{proof}
	We proceed along the lines of the proof of the Krylov-Bogolyubov theorem  \cite[Section 3.1]{DaZa1996}. Let $\nu_i \in \mathcal{P}(E_i)$ be arbitrary initial conditions for the processes $(X_t^i)_{t \ge 0}$. By ergodicity, the families $(\tilde{\pi}^i_t)_{t \ge 0}$ of C{\'e}saro averages 
	\begin{equation*}
	\tilde{\pi}^i_t(A) = \frac{1}{t} \int_0^t \left((S_s^i)^*\nu_i\right)(A) \, \mathrm{d}s, \quad A \in \mathcal{B}(E_i),
	\end{equation*}
	are convergent, and therefore tight. Let $(\bar{X}_t)_{t \ge 0}$ be a Feller coupling and denote the corresponding  C{\'e}saro averages by $(\tilde{\bar{\pi}}_t)_{t \ge 0}$. For any $t \ge 0$, $\tilde{\bar{\pi}}_t$ is a coupling of $(\tilde{\pi}^i_t)_{i = 1}^n$. Using an obvious extension of \cite[Lemma 4.4]{V2009} to the multimarginal case, we see that $(\tilde{\bar{\pi}}_t)_{t \ge 0}$ is tight. By Prokhorov's theorem, there exists a weakly converging subsequence, the limit of which is an invariant measure (as in the proof of the Krylov-Bogolyubov theorem). This proves the first claim. Now let us assume that there exists a unique invariant measure. Since any convergent subsequence of $(\tilde{\bar{\pi}}_t)_{t \ge 0}$ has to converge to the same limit, and the sequence is tight, the second claim follows. 
\end{proof}
By the results obtained in \cite{kliemann1987recurrence}, uniqueness of the invariant measure is implied by certain regularity properties of the process. This leads to the following convenient criterion.
\begin{corollary}[Regular couplings]
	\label{cor:regular couplings}
	Let Assumption \ref{ass:marginal ergodicity} be satisfied and consider a Feller coupling $(\bar{X}_t)_{t \ge 0}$. If the corresponding transition functions $(\rho_t(x,\cdot))_{t \ge 0, x \in \bar{E}}$ are mutually absolutely continuous (i.e. if the process is \emph{regular}), then $(\bar{X}_t)_{t \ge 0}$ is ergodic.
\end{corollary}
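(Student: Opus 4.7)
The plan is to combine Lemma \ref{lem:coupled ergodicity} with a Doob-type uniqueness argument that exploits the mutual absolute continuity of the transition kernels. By Lemma \ref{lem:coupled ergodicity}(1), under Assumption \ref{ass:marginal ergodicity} the Feller coupling admits at least one invariant measure, and by Lemma \ref{lem:coupled ergodicity}(2), ergodicity in the sense of \eqref{eq:ergodic averages} follows once uniqueness is established. The entire task therefore reduces to proving that the invariant measure of $(\bar{X}_t)_{t \ge 0}$ is unique.

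For uniqueness I would argue by contradiction, using the standard ergodic decomposition: if two distinct invariant measures existed, one may extract two extremal ones $\bar{\mu}, \bar{\nu}$ that are necessarily mutually singular. Pick a Borel set $A \subset \bar{E}$ with $\bar{\mu}(A) = 1$ and $\bar{\nu}(A) = 0$. Invariance yields
\begin{equation*}
1 = \bar{\mu}(A) = \int_{\bar{E}} \rho_t(x,A) \, \mathrm{d}\bar{\mu}(x), \qquad 0 = \bar{\nu}(A) = \int_{\bar{E}} \rho_t(x,A) \, \mathrm{d}\bar{\nu}(x),
\end{equation*}
so $\rho_t(x,A) = 1$ for $\bar{\mu}$-a.e.\ $x$ while $\rho_t(x,A) = 0$ for $\bar{\nu}$-a.e.\ $x$. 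Selecting representative points $x_\mu$ and $x_\nu$ from the respective full-measure sets produces two transition kernels $\rho_t(x_\mu,\cdot)$ and $\rho_t(x_\nu,\cdot)$ that are mutually singular, in direct contradiction with the assumed regularity. Hence any two invariant measures coincide, and the corollary is proved.

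The subtle point I anticipate is that the exceptional null sets above depend on both $A$ and $t$, so the selection of $x_\mu$ and $x_\nu$ is not a priori unambiguous across all choices of the test set. This is precisely the technical hurdle addressed in \cite{kliemann1987recurrence}, where regularity is formulated as a pointwise (rather than almost-sure) property of the family $(\rho_t(x,\cdot))_{t \ge 0, x \in \bar{E}}$. Under that pointwise formulation, the Doob-type dichotomy becomes an immediate citation and removes the need for a separate measurability discussion, so the whole proof collapses to combining Lemma \ref{lem:coupled ergodicity} with the relevant theorem from that reference.
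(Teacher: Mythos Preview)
Your proposal is correct and matches the paper's approach exactly: the paper does not give a separate proof for this corollary but simply precedes it with the remark that uniqueness follows from the regularity results in \cite{kliemann1987recurrence}, leaving existence and the passage to ergodicity to Lemma~\ref{lem:coupled ergodicity}. Your explicit Doob-type sketch is a helpful elaboration of what the cited reference supplies, and your closing paragraph correctly identifies that the pointwise formulation of regularity is what makes the argument go through without measurability issues.
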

The measures $\pi_i$, as well as $\bar{\pi}_\Gamma$ (for $\Gamma \in \mathcal{G}^0$) induce the usual Hilbert spaces $L^2(\pi_i)$ and $L^2(\bar{\pi}_\Gamma)$ of square-integrable functions. A crucial role will be played furthermore by the corresponding subspaces of centred functions, defined by
\begin{equation}
L^2_0(\bar{\pi}_\Gamma) = \{f \in L^2(\bar{\pi}_\Gamma) \, \vert \, \bar{\pi}_\Gamma (f) = 0\},
\end{equation}
and analogously for $L_0^2(\pi_i)$. Since any Feller process has a right-continuous version, the semigroups $(\bar{S}_t^\Gamma)_{t \ge 0}$ as well as the corresponding generators $(\bar{\mathcal{L}}_\Gamma,\mathcal{D}(\bar{\mathcal{L}}_\Gamma))$ have unique extensions to strongly continuous semigroups on $L^2(\bar{\pi}_\Gamma)$ by Jensen's inequality. Slightly abusing the notation, we will denote those semigroups and their generators by the same letters. Before moving on to a somewhat more explicit description of coupling operators, let us mention the following open question, related to Conjecture \ref{conj:G convex}:
\begin{conjecture}
	\label{conj:G0 convex}
	The set $\mathcal{G}^0$ is convex.
\end{conjecture}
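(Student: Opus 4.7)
The plan is to address the conjecture in two stages, first establishing Conjecture \ref{conj:G convex} (convexity of $\mathcal{G}$) and then deducing that ergodicity is preserved. The backbone of the whole argument is the elementary identity
\begin{equation*}
\bar{\mathcal{L}}_{(1-\lambda)\Gamma_0 + \lambda\Gamma_1} \;=\; (1-\lambda)\,\bar{\mathcal{L}}_{\Gamma_0} + \lambda\,\bar{\mathcal{L}}_{\Gamma_1},
\end{equation*}
which follows from \eqref{eq:generator} together with the fact that the ``marginal part'' $\sum_i \mathcal{L}_i$ is fixed. One is thus really studying convex combinations of Feller generators sharing a common diagonal part, a much more rigid setting than generic sums of unbounded operators.

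For Conjecture \ref{conj:G convex}, condition \ref{it:kernel} of Definition \ref{def:coupling operators} is manifestly linear in $\Gamma$, so the kernel property is immediate. The content lies in condition \ref{it:generator}, for which I would invoke the Hille–Yosida–Ray theorem (\cite[Theorem 1.30]{LevyMatters2013}). Two of its three hypotheses come for free: conservativeness $\bar{\mathcal{L}}_{\Gamma_\lambda}\mathbf{1}=0$ is inherited linearly (using the extension discussed in Remark \ref{rem:constants}), and the positive maximum principle is preserved under convex combinations—if $f$ attains a non-negative maximum at $\bar x$, then $(\bar{\mathcal{L}}_{\Gamma_j} f)(\bar x)\le 0$ for both $j=0,1$, hence so is $(\bar{\mathcal{L}}_{\Gamma_\lambda} f)(\bar x)$. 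The remaining task is the range condition, namely density of $\Ran(\alpha I - \bar{\mathcal{L}}_{\Gamma_\lambda})$ in $C_0(\bar E)$ for some $\alpha > 0$. I would attack this by viewing $\Gamma_\lambda$ as a relatively bounded perturbation of $\sum_i \mathcal{L}_i$ in the spirit of \cite[Chapter III]{EN2000}, or, in each of the examples of Section \ref{ch:coupling_examples}, verifying it directly on a convenient core (elliptic regularity for the diffusion couplings of Section \ref{sec:overdamped_all}, explicit jump kernel computations for the PDMP couplings of Section \ref{sec:zigzag}).

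Granted Conjecture \ref{conj:G convex}, I would then turn to Conjecture \ref{conj:G0 convex} itself. Let $\Gamma_0,\Gamma_1 \in \mathcal{G}^0$; by Lemma \ref{lem:coupled ergodicity} the $\Gamma_\lambda$-dynamics admits at least one invariant measure, so only uniqueness remains. The natural tool is the regularity criterion of Corollary \ref{cor:regular couplings}: heuristically, mixing two coupling operators can only \emph{increase} the effective noise, and hence should preserve mutual absolute continuity of the transition kernels. Concretely, in the diffusion setting the diffusion matrix of $\Gamma_\lambda$ is the convex combination of those of $\Gamma_0$ and $\Gamma_1$, so any Hörmander-type hypoellipticity enjoyed by one endpoint transfers; for PDMP couplings the jump kernel of $\Gamma_\lambda$ is the corresponding convex superposition, which can only enlarge the irreducibility class. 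The \emph{main obstacle}, and presumably the reason the statement is only conjectural, is that in full generality regularity of $\Gamma_0$ and $\Gamma_1$ need not imply regularity of $\Gamma_\lambda$: the two endpoints could be ``opposing'' degenerate couplings whose mixture happens to remain degenerate in a harmful way. Theorem \ref{thm:max boundary} suggests that extremal non-ergodic couplings sit on the boundary of $\mathcal{G}^0$, which is consistent with convexity but does not imply it. Accordingly, I would expect a clean proof to require mild structural hypotheses on the one-particle generators $\mathcal{L}_i$, with the fully general argument being the principal difficulty.
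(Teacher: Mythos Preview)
The statement you are addressing is presented in the paper as an open \emph{conjecture}; the paper does not give a proof, and in fact explicitly flags it as an ``open question, related to Conjecture~\ref{conj:G convex}''. There is therefore no proof in the paper against which to compare your proposal.

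Your write-up is a reasonable heuristic road map rather than a proof, and you are candid about this: you correctly isolate the range condition in Hille--Yosida--Ray as the nontrivial part for Conjecture~\ref{conj:G convex}, and you correctly identify the preservation of uniqueness of the invariant measure (not existence, which is handled by Lemma~\ref{lem:coupled ergodicity}) as the crux for Conjecture~\ref{conj:G0 convex}. Two specific comments. First, your claim that ``mixing two coupling operators can only increase the effective noise'' is not true in the diffusion case: the diffusion matrix of the convex combination is $(1-\lambda)Q_0 + \lambda Q_1$, and its kernel can be strictly larger than $\ker Q_0 \cap \ker Q_1$ (take $Q_0,Q_1$ rank-one projections onto different lines in $\mathbb{R}^2$; neither kernel is contained in the other, yet the sum is elliptic---but conversely, one can cook up degenerate $Q_0,Q_1$ whose average remains degenerate in a direction where each individually is not). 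So the ``noise can only increase'' heuristic cuts both ways and does not directly yield hypoellipticity of the mixture. Second, relative boundedness of $\Gamma$ with respect to $\sum_i \mathcal{L}_i$ is delicate precisely at the boundary of $\mathcal{G}$ (where, e.g., $\alpha$ saturates $\pm 1$ in Section~\ref{sec:overdamped 1d}), which is where the interesting couplings live according to Theorem~\ref{thm:max boundary}.

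In short: there is no paper proof to match, your proposal is a plausible outline with honestly acknowledged gaps, and the conjecture remains open as stated.
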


\subsection{A general way of constructing coupling operators}
\label{sec:general construction}

In this section we describe an approach to construct coupling operators explicitly in applications. The particular form presented here is also theoretically important since some of the calculations in later sections depend on it (especially the proof of Theorem \ref{thm:max boundary}).

As in the previous section, we assume that the marginal processes are ergodic with respect to invariant measures $\pi_i$ (see Assumption \ref{ass:marginal ergodicity}). Furthermore, let us assume that the generators $\mathcal{L}_i$ can be written as
\begin{equation}
\label{eq:generator decomposition}
\mathcal{L}_i = -\sum_{k=1}^{K_i} (A_k^i)^*A_k^i + B^i,
\end{equation}
where $(A_k^i)^*$ denotes the adjoint of $A_k^i$ in $L^2(\pi_i)$, and $B^i$ is antisymmetric in $L^2(\pi_i)$. Clearly, this decomposition into symmetric and antisymmetric parts is always possible, and in many cases the operators $A_k^i$ and $B^i$ can be chosen to have convenient forms. Note however that the decomposition \eqref{eq:generator decomposition} is not unique, since there are (infinitely) many ways of choosing the operators $A^i_k$. A particular choice of decomposing the generators $\mathcal{L}_i$ as in \eqref{eq:generator decomposition} hence essentially amounts to the choice of square-roots for the symmetric parts. We remark here that naturally $C_c^{\infty}(\bar{E}) \subset \mathcal{D}(A_k^i)$ and $C_c^{\infty}(\bar{E}) \subset \mathcal{D}(B^i)$ are implicitly assumed, authorising the computations in later sections. The following Lemma is essential for the construction in this subsection: 

\begin{lemma}
	\label{lem:decomposition kernel}
	Let $A_k^i$ and $B^i$ be given as in equation \eqref{eq:generator decomposition}. Then 
	\begin{equation}
	 \Span \mathbf{1} \subset \ker A_k^i, \quad \Span \mathbf{1} \subset \ker B^i,
	\end{equation}
	for all $i \in \{1,\ldots,n\},\,k \in \{1,\ldots K_i\}$.
\end{lemma}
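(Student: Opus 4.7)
The plan is to combine conservativeness of the semigroup (which gives $\mathcal{L}_i\mathbf{1} = 0$, recorded in Remark \ref{rem:constants}) with the $L^2(\pi_i)$ symmetry structure encoded in the decomposition \eqref{eq:generator decomposition}. The whole argument is one short computation, but it is worth laying out the two steps separately.

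First, I would invoke the extended generator discussion in Remark \ref{rem:constants} to justify that $\mathbf{1} \in \mathcal{D}(\mathcal{L}_i)$ and $\mathcal{L}_i \mathbf{1} = 0$; since the decomposition \eqref{eq:generator decomposition} is implicitly assumed to hold on a domain containing the constants (as the statement of the lemma concerns $\mathbf{1}$), this yields
\begin{equation*}
-\sum_{k=1}^{K_i} (A_k^i)^* A_k^i \mathbf{1} + B^i \mathbf{1} = 0.
\end{equation*}

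Next, I would pair this identity with $\mathbf{1}$ in the $L^2(\pi_i)$ inner product. Antisymmetry of $B^i$ gives $\langle \mathbf{1}, B^i \mathbf{1}\rangle_{L^2(\pi_i)} = 0$, and the definition of the adjoint yields
\begin{equation*}
\langle \mathbf{1}, (A_k^i)^* A_k^i \mathbf{1}\rangle_{L^2(\pi_i)} = \langle A_k^i\mathbf{1}, A_k^i\mathbf{1}\rangle_{L^2(\pi_i)} = \Norm{A_k^i \mathbf{1}}_{L^2(\pi_i)}^2.
\end{equation*}
Hence $\sum_{k=1}^{K_i}\Norm{A_k^i \mathbf{1}}_{L^2(\pi_i)}^2 = 0$, which forces $A_k^i \mathbf{1} = 0$ for every $k$. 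Substituting back into the previous display gives $B^i \mathbf{1} = 0$ as well, completing the proof.

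There is essentially no obstacle: the only subtlety is the domain question, namely ensuring that $\mathbf{1}$ belongs to $\mathcal{D}(A_k^i)$ and $\mathcal{D}(B^i)$ so that the calculation is legitimate. This is part of the standing interpretation of \eqref{eq:generator decomposition} (and is made explicit in the paper's remark that $C_c^\infty$ lies in the relevant domains, with $\mathbf{1}$ handled by the extended-generator construction of Remark \ref{rem:constants}); once granted, the identity $\mathcal{L}_i\mathbf{1}=0$ together with the symmetric/antisymmetric split does all the work.
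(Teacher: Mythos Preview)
Your argument is correct and is precisely the content of \cite[Proposition 2]{villani2009hypocoercivity}, which is what the paper cites in lieu of a proof; you have simply spelled out that reference. The only delicate point you already flag yourself, namely that $\mathbf{1}$ must lie in the relevant domains, and this is handled exactly as you indicate via Remark \ref{rem:constants}.
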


\begin{proof}
	See \cite[Proposition 2]{villani2009hypocoercivity}.
\end{proof}
We may now set
\begin{equation}
\label{eq:Gamma construction}
\Gamma = \sum_{(i,j,k,l) \in \mathcal{J}} \alpha_{ijkl}(x_1,\ldots,x_n) A_{k}^i A_{l}^j
\end{equation}
for appropriate measurable functions $\alpha_{ijkl}: \bar{E} \rightarrow \mathbb{R}$ and where we have introduced the set of \emph{admissible indices}
\begin{equation}
\label{eq:admissible indices}
\mathcal{J} = \{ (i,j,k,l) \in \mathbb{N}^4: 1 \le i,j \le n,\, i \neq j, 1 \le k \le K_i, \, 1 \le l \le K_j\},
\end{equation}
associated to the decomposition \eqref{eq:generator decomposition}.
 Applying Lemma \ref{lem:decomposition kernel}, we see immediately that the first condition of Definition \ref{def:coupling operators} is satisfied. The second condition will typically enforce certain bounds on the functions $\alpha_{ijkl}$ via the positive maximum principle as well as regularity constraints if we are interested in Feller couplings. Those properties will have to be determined according to the particular form of the generators $\mathcal{L}_i$. Furthermore, whether $\Gamma$ as defined in \eqref{eq:Gamma construction} belongs to $\mathcal{G}^0$ will also depend on the choice of the functions $\alpha_{ijkl}$.

It is not clear whether the construction presented in this section exhausts the class of coupling operators $\mathcal{G}$. We present this problem as a conjecture:

\begin{conjecture}
	\label{conj:general construction}
	For $i \in \{1,\ldots,n\}$, assume that we are given generators $(\mathcal{L}_{i}, \mathcal{D}(\mathcal{L}_i))$ of ergodic Feller semigroups. Then there exist decompositions of the form \eqref{eq:generator decomposition} and a set of functions
	\begin{equation}
	\mathcal{U} = \{\left(\alpha_{ijkl}\right)_{(i,j,k,l)\in \mathcal{J}}:\bar{E} \rightarrow \mathbb{R} \}
	\end{equation} 
	such that
	\begin{equation}
	\mathcal{G} = \left\{\Gamma = \sum_{(i,j,k,l) \in \mathcal{J}} \alpha_{ijkl} A_{k}^i A_{l}^j: \quad \alpha_{ijkl} \in \mathcal{U}\right\}.
	\end{equation}
\end{conjecture}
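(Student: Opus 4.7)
My plan is to combine a Courrège-type structural theorem for Feller generators with the kernel condition in Definition \ref{def:coupling operators}, and then to read off the coefficient functions $\alpha_{ijkl}$ by matching terms. Throughout I assume enough regularity on $\bar{E}$ (for instance, that it is an open subset of $\mathbb{R}^d$ or a smooth manifold) so that the classical representation results apply on $C_c^\infty(\bar{E})$.

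First I would write the generator of the coupled Feller process in Courrège form,
\begin{equation*}
\bar{\mathcal{L}}_\Gamma f(x) = b(x)\cdot \nabla f(x) + \tfrac{1}{2}\mathrm{Tr}\bigl(a(x)\,\Hess f(x)\bigr) + \int \bigl( f(x+y)-f(x)-\mathbf{1}_{|y|\le 1}\, y\cdot \nabla f(x)\bigr)\,\nu(x,dy),
\end{equation*}
with $a(x)$ symmetric positive semidefinite and $\nu$ a Lévy-type kernel. Testing this identity against functions $f \in C_c^\infty(E_i)$ recovers the Courrège representation of $\mathcal{L}_i$, so that after subtraction the kernel property (\ref{it:kernel}) forces all single-coordinate parts of $\Gamma$ to vanish. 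What remains is a genuine \emph{cross} operator involving only off-diagonal diffusion entries $a_{ij}(x)$ for $i\neq j$ and a joint jump kernel $\mu(x,dy)$ supported on vectors with at least two nonzero components in distinct $E_i$-directions.

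Next I would exhibit a decomposition $\mathcal{L}_i = -\sum_{k=1}^{K_i}(A_k^i)^*A_k^i + B^i$ whose square-roots $A_k^i$ are rich enough that the products $A_k^i A_l^j$ ($i\neq j$) span the admissible cross operators. In the pure diffusion setting one can take the $A_k^i$ to be first-order differential operators acting in $x_i$; standard linear algebra then expresses any smooth off-diagonal block $a_{ij}(x)$ as a sum $\sum_{k,l} \alpha_{ijkl}(x)\,\sigma_k^i(x)\otimes \sigma_l^j(x)$ of tensor products of the symbols $\sigma_k^i$ of the $A_k^i$, provided we keep enough of them to form a frame in the relevant tangent direction. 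Lemma \ref{lem:decomposition kernel} then guarantees that (\ref{it:kernel}) holds automatically for any such combination, and the requirement that $\bar{\mathcal{L}}_\Gamma$ be a Feller generator (via the positive maximum principle) cuts out a set $\mathcal{U}$ of admissible coefficients $\alpha_{ijkl}$.

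The main obstacle is the jump component. For PDMP-type one-particle dynamics (such as the zigzag process of Section \ref{sec:zigzag}), the operators in \eqref{eq:generator decomposition} are themselves of integral type, and products $A_k^i A_l^j$ generate joint jump kernels of tensor-product form. It is not \emph{a priori} clear that every admissible joint kernel $\mu(x,dy)$ compatible with the positive maximum principle can be written as a finite combination of such tensor products with bounded scalar coefficients; this is a delicate moment/positivity problem and may force one to enlarge the class $\mathcal{U}$ (for instance by allowing $\alpha_{ijkl}$ to be measure-valued) or to pass to a suitable closure. I would therefore first prove the conjecture in the purely diffusive case, next for PDMPs whose generators admit a product-structure Lévy kernel, and only after that attempt to address the general Feller case. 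The jump step is where I expect a direct proof to break, and where new ideas are most likely required.
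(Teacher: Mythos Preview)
The statement you are attempting to prove is a \emph{conjecture} in the paper, not a theorem: the authors explicitly write ``It is not clear whether the construction presented in this section exhausts the class of coupling operators $\mathcal{G}$. We present this problem as a conjecture.'' There is therefore no proof in the paper to compare against. The paper only establishes the partial result Proposition~\ref{prop:general construction}, which covers the purely diffusive case $E_i=\mathbb{R}^{d_i}$ with It\^o SDE dynamics, by combining Courr\`ege's theorem with the explicit symmetric/antisymmetric decomposition of diffusion generators from \cite{DNP2017}.

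Your proposal is honest about its limitations and essentially rediscovers the boundary of what is known. Your diffusion argument---Courr\`ege representation, subtract the marginal pieces using the kernel property, then match the off-diagonal second-order block against tensor products of the first-order $A_k^i$---is exactly the mechanism behind Proposition~\ref{prop:general construction}. Your identification of the jump component as the obstruction is also correct, and indeed is the reason the general statement remains a conjecture: for Feller generators with nontrivial L\'evy kernels, there is no obvious reason why an arbitrary joint jump kernel compatible with the positive maximum principle and the marginal constraints should factor through finitely many products $A_k^i A_l^j$ with scalar coefficients. So you have not found a gap in your own reasoning so much as located the genuine open problem; a complete proof would require either a new structural result for cross-jump kernels or a counterexample, neither of which you (or the paper) supply.
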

In the case when the underlying processes $(X^i_t)_{t \ge 0}$ are $\mathbb{R}^{d_i}$-valued (i.e. $E_i = \mathbb{R}^{d_i}$) and have continuous paths almost surely, Courr{\`e}ge's Theorem (\cite[Theorem  0.1]{Cour65}, see also \cite[Section 4.5]{Jac2001vol1} for a more recent account) provides an explicit characterisation of Feller generators. If furthermore these processes are ergodic with respect to given invariant measures, the decomposition of their generators into symmetric and antisymmetric part can be made explicit (see \cite[Theorem 1]{DNP2017}).  Combining these theorems, we obtain the following partial result:
\begin{proposition}
	\label{prop:general construction}
	Let $E_i = \mathbb{R}^{d_i}$ for positive integers $d_i \in \mathbb{N}$ and assume that the processes $(X_t^i)_{t \ge 0}$ are ergodic and solve the It{\^o} SDEs
	\begin{equation}
	\mathrm{d} X_t^i = b^i(X_t^i) \,\mathrm{d}t + \sqrt{2} \sigma^i(X_t^i)\,\mathrm{d}W_t^i, 
	\end{equation}
	where $b^i \in C^1(\mathbb{R}^{d_i},\mathbb{R}^{d_i})$, $\sigma \in C^1(\mathbb{R}^{d_i},\mathbb{R}^{d_i \times m_i})$, and $(W_t^i)_{t \ge 0}$ are standard $m_i$-dimensional Brownian motions. Then the conclusion of Conjecture \ref{conj:general construction} holds.
\end{proposition}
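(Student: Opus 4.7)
The plan is to combine Courr{\`e}ge's characterization of Feller generators of diffusion type with the explicit symmetric/antisymmetric decomposition of ergodic diffusion generators provided by \cite[Theorem 1]{DNP2017}, and then read off $\Gamma$ as the off-diagonal second-order part of $\bar{\mathcal{L}}_\Gamma$. The key step is to identify $\bar{\mathcal{L}}_\Gamma$ pointwise as a second-order elliptic operator, extract its off-diagonal blocks, and exhibit a factorization of those blocks through the $\sigma^i$'s using positive semi-definiteness.

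I would first argue that for any $\Gamma \in \mathcal{G}$ the Feller process generated by $\bar{\mathcal{L}}_\Gamma$ has almost surely continuous paths: by Proposition~\ref{prop:Gamma coupling} each coordinate process is equal in law to $(X^i_t)_{t \ge 0}$, which is continuous by hypothesis, and any jump of the joint process would project to a jump of at least one coordinate. Combined with Assumption~\ref{ass:rich} and conservativeness, Courr{\`e}ge's theorem (\cite[Theorem 0.1]{Cour65}; see also \cite[Section 4.5]{Jac2001vol1}) then forces
\begin{equation}
\bar{\mathcal{L}}_\Gamma f(\bar{x}) = \bar{b}(\bar{x}) \cdot \nabla f(\bar{x}) + \tfrac{1}{2}\, \bar{a}(\bar{x}) : \nabla^{2} f(\bar{x}), \qquad f \in C_c^{\infty}(\bar{E}),
\end{equation}
with continuous coefficients and $\bar{a}$ pointwise positive semi-definite. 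Partitioning $\bar{b} = (\bar{b}^{i})_i$ and $\bar{a} = (\bar{a}^{ij})_{i,j}$ according to the block structure $\bar{E} = \mathbb{R}^{d_1} \times \cdots \times \mathbb{R}^{d_n}$, the kernel condition \ref{it:kernel} of Definition~\ref{def:coupling operators}, applied to $f \in C_c^{\infty}(E_i)$, yields $\bar{\mathcal{L}}_\Gamma f = \mathcal{L}_i f$; matching first- and second-order coefficients forces $\bar{b}^{i} = b^{i}$ and $\bar{a}^{ii} = 2 \sigma^{i}(\sigma^{i})^{T}$.

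Consequently $\Gamma = \tfrac{1}{2} \sum_{i \neq j} \bar{a}^{ij} : \nabla_{x_i} \nabla_{x_j}^{T}$, and the task reduces to writing the off-diagonal blocks in the form $\bar{a}^{ij}(\bar{x}) = 2\sigma^{i}(\bar{x})\, M_{ij}(\bar{x})\, (\sigma^{j}(\bar{x}))^{T}$ for measurable $M_{ij} : \bar{E} \to \mathbb{R}^{m_i \times m_j}$. By the standard Schur-complement criterion for block-positive-semidefinite matrices with prescribed diagonal, the identities $\bar{a}^{ii} = 2\sigma^{i}(\sigma^{i})^{T}$ force $\mathrm{Range}(\bar{a}^{ij}(\bar{x})) \subseteq \mathrm{Range}(\sigma^{i}(\bar{x}))$ and the analogous inclusion for the transpose, so the factorization exists pointwise; a Borel-measurable selection is obtained from Moore--Penrose pseudo-inverses, e.g.\ $M_{ij}(\bar{x}) = \tfrac{1}{2} \sigma^{i}(\bar{x})^{+}\, \bar{a}^{ij}(\bar{x})\, ((\sigma^{j}(\bar{x}))^{T})^{+}$. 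Finally, \cite[Theorem 1]{DNP2017} provides a decomposition of the form \eqref{eq:generator decomposition} in which $A_k^{i} = (\sigma^{i})_{\cdot k} \cdot \nabla_{x_i}$; since $\sigma^{j}(x_j)$ does not depend on $x_i$ for $i \neq j$, a short calculation gives $A_k^{i} A_l^{j} = \sum_{p,q} \sigma^{i}_{pk}\, \sigma^{j}_{ql}\, \partial_{x_{i,p}} \partial_{x_{j,q}}$, so setting $\alpha_{ijkl}(\bar{x}) := (M_{ij}(\bar{x}))_{kl}$ delivers the required representation $\Gamma = \sum_{(i,j,k,l) \in \mathcal{J}} \alpha_{ijkl}\, A_k^{i} A_l^{j}$.

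The main technical hurdle is controlling the regularity of the selection $M_{ij}$: the pseudo-inverse is automatically Borel measurable but may be discontinuous at points where the rank of $\sigma^{i}$ drops, and some care is needed to verify that the resulting class $\mathcal{U}$ of coefficient functions is compatible with condition~\ref{it:generator} of Definition~\ref{def:coupling operators} (e.g.\ for the positive maximum principle to propagate cleanly to $\bar{\mathcal{L}}_\Gamma$). This is a non-issue when $\sigma^{i}$ has constant rank (and in particular when it is invertible); in the general case one can either restrict attention to that setting, or appeal to a measurable selection theorem for the linear equation $\sigma^{i} M_{ij} (\sigma^{j})^{T} = \tfrac{1}{2}\bar{a}^{ij}$ on the (possibly varying) range of the induced map.
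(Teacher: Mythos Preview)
Your proposal is correct and follows essentially the same approach as the paper: the paper's proof is only a one-sentence sketch, stating that Courr{\`e}ge's theorem combined with \cite[Theorem 1]{DNP2017} yields the result, and your argument is precisely a detailed implementation of that strategy. The additional steps you supply---the continuity of paths for the joint process, the block identification via the kernel condition, and the Schur-complement/pseudo-inverse factorization of the off-diagonal blocks---are exactly the missing details one would need to fill in, and they are handled soundly.
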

\section{Examples of coupled processes}
\label{ch:coupling_examples}
Here we will illustrate the framework developed in the last section with concrete examples. Throughout we consider the task of sampling from the measure
\begin{equation}
\label{eq:coupling_invariant measure}
\pi(\mathrm{d}x) = \frac{1}{Z}e^{-V(x)}\mathrm{d}x, \quad x \in \mathbb{R}^d,
\end{equation}
where $V\in C^{\infty}(\mathbb{R}^d)$ is a potential satisfying
\begin{equation}
Z:= \int_{\mathbb{R}^d} e^{-V(x)} \mathrm{d}x < \infty.
\end{equation} 
\subsection{Overdamped Langevin dynamics}
\label{sec:overdamped_all}
Our first group of examples is concerned with the overdamped Langevin dynamics \cite[Section 4.5]{pavliotis2014stochastic}. Let us start with the one-dimensional case, already encountered in the introduction.
\subsubsection{Two particles in one dimension}
\label{sec:overdamped 1d}
We consider $n=2$ particles moving in dimension $d=1$, each of them according to the dynamics
\begin{equation}
\label{eq:1d overdamped}
\mathrm{d}X_t = -V'(X_t) \,\mathrm{d}t + \sqrt{2}\,\mathrm{d}W_t.
\end{equation}
Note that in order to precisely fit into our framework developed in the previous section, the process $(X_t)_{t \ge 0}$ is required to be a Feller process according to Definition \ref{def:Feller}. This property can be guaranteed by imposing certain growth conditions on the potential $V$, see \cite[Proposition 5.9]{LMS2016} and \cite[Theorem 5.3.2, Example 5.3.3]{LM2007}. However, we wish to remark that the Feller property is not crucial in practice and dispensing with this regularity requirement still leads to perfectly well-defined couplings as will become clear in Lemma \ref{lem:1d BMs} below. The generator of \eqref{eq:1d overdamped} is given by
\begin{equation}
\label{eq:decomp 1d overdamped}
\mathcal{L}  = -V'(x)\partial_x + \partial_x^2  
= -\partial_x^* \partial_x,
\end{equation}
where the adjoint is taken in $L^2(\pi)$. In particular, $\mathcal{L}$ can naturally be written in the form \eqref{eq:generator decomposition}, with $A=\partial_x$ and $B=0$. To illustrate trivial couplings (see Example \ref{ex:independent coupling}), consider first two independent identical copies of \eqref{eq:1d overdamped}, denoted by $(X_t,Y_t)_{t \ge 0}$, hence evolving according to the dynamics
\begin{subequations}
	\label{eq:1d independent coupling}
	\begin{align}
	\mathrm{d}X_t & = -V'(X_t) \,\mathrm{d}t + \sqrt{2}\,\mathrm{d}W^x_t, \\
	\mathrm{d}Y_t & = -V'(Y_t) \,\mathrm{d}t + \sqrt{2}\,\mathrm{d}W^y_t,
	\end{align}
\end{subequations} 
on the product space $\bar{E} = \mathbb{R}^2$. Since for now the processes $(W^x_t)_{t \ge 0}$ and $(W^y_t)_{t \ge 0}$ are supposed to be two independent standard Brownian motions, the generator of \eqref{eq:1d independent coupling} is given by
\begin{equation}
\bar{\mathcal{L}}_0 = \mathcal{L}_x + \mathcal{L}_y,
\end{equation}
with $\mathcal{L}_x = -V'(x)\partial_x + \partial_x^2 = -\partial_x^* \partial_x$ and $\mathcal{L}_y  = -V'(y)\partial_y + \partial_y^2 = -\partial_y^* \partial_y$, in agreement with Example \ref{ex:independent coupling}, equation \eqref{eq:L0 def}. The invariant measure of \eqref{eq:1d independent coupling} is given by the product
\begin{equation}
\bar{\pi}_0 = \pi_x \otimes \pi_y = \frac{1}{Z^2}e^{-\left(V(x)+V(y)\right)} \mathrm{d}x \mathrm{d}y.
\end{equation}
Now let us consider nontrivial couplings. Following Section \ref{sec:general construction}, we may set
\begin{equation}
\label{eq:1d overdamped Gamma}
\Gamma = 2\alpha(x,y)\partial_x \partial_y,
\end{equation}
for an appropriate measurable function $\alpha:\mathbb{R}^2 \rightarrow \mathbb{R}$ (we have  inserted a factor of $2$ for convenience). According to Proposition \ref{prop:general construction}, the set of operators of the form \eqref{eq:1d overdamped Gamma} exhausts the set of coupling operators $\mathcal{G}$.
Clearly, the first condition of Definition \ref{def:coupling operators} is satisfied for this set of operators (this is already guaranteed by using the construction from Section \ref{sec:general construction}).
The second condition enforces
\begin{equation}
\label{eq:1d overdamped constraint}
-1 \le \alpha(x,y) \le 1, \quad \mbox{for all }x,y \in \mathbb{R}.
\end{equation} 
Indeed, observe that
\begin{equation}
\label{eq:1d overdamped full generator}
\bar{\mathcal{L}}_{\Gamma} = \bar{\mathcal{L}}_0 + \Gamma = \nabla_z U(z) \cdot \nabla_z + Q(z) : \nabla_z \nabla_z, 
\end{equation}
where $z=(x,y)$, $U(z) = U(x,y) = V(x) + V(y)$, 
\begin{equation}
\label{eq:1d overdamped Q}
Q(z) = Q(x,y) = 
\begin{pmatrix}
1 & \alpha(x,y) \\
\alpha(x,y) & 1
\end{pmatrix},
\end{equation}
and where $:$ denotes the Frobenius inner product of matrices.
According to Courr{\`e}ge's theorem, $\bar{\mathcal{L}}_\Gamma$  satisfies the positive maximum principle (required by the Hille-Yosida-Ray Theorem) only if $Q(z)$ is nonnegative definite for every $z \in \mathbb{R}^2$ . From this, we immediately deduce the constraint \eqref{eq:1d overdamped constraint}. 
\begin{remark}
	\label{rem:regularity alpha}
	We are deliberately vague about the regularity properties of $\alpha$. If we restrict our attention to Feller processes, $\alpha$ certainly has to be at least continuous, and there are multiple results in the literature guaranteeing the Feller property under mild further assumptions on $\alpha$, in particular, H{\"o}lder regularity  \cite{LMS2010,LM2007}. For a discussion of the martingale problem for generators with discontinuous coefficients see the recent preprint \cite{Kuehn2018} and references therein. Note that even in this simple case, it is very challenging to characterise exactly the set $\mathcal{G}$ as introduced in Definition \ref{def:coupling operators}.  In applications, however, the Feller property is not crucial. Lemma \ref{lem:1d BMs} below shows that measurability of $\alpha$ is sufficient to ensure that a reasonable coupled process can be constructed.
\end{remark}
Assuming that \eqref{eq:1d overdamped constraint} is satisfied, the dynamics induced by the generator \eqref{eq:1d overdamped full generator} are (at least formally) given by
\begin{equation}
\label{eq:1d coupled overdamped}
\mathrm{d}
\begin{pmatrix}
X_t \\ Y_t
\end{pmatrix} =
\begin{pmatrix}
-V'(X_t) \\ -V'(Y_t)
\end{pmatrix} \mathrm{d}t + \sqrt{2} G(X_t,Y_t) 
\begin{pmatrix}
\mathrm{d}W^x_t \\ \mathrm{d}W^y_t
\end{pmatrix},
\end{equation}
where $G(x,y)G(x,y)^T = Q(x,y)$, for instance
\begin{equation}
\label{eq:1d coupling matrix}
G(x,y) = 
\begin{pmatrix}
\cos \beta(x,y) & g(x,y)\sin \beta(x,y) \\	g(x,y)\sin \beta(x,y) & \cos \beta(x,y)
\end{pmatrix},
\end{equation}
with $\beta (x,y) = \frac{1}{2}\arcsin \vert \alpha(x,y)\vert$ and $g(x,y) = \sgn \alpha(x,y)$. We have chosen this parametrisation since it generalises readily to higher dimensions (see below). Let us stress that writing the dynamics in the form \eqref{eq:1d coupled overdamped} is vital for applications, since it enables its simulation in a straightforward manner. The following lemma shows that the process constructed in this way is indeed a coupling in the sense of Definition \ref{def:coupling}. Furthermore, it turns out that only minimal regularity of $\alpha$ is required.
\begin{lemma}
	\label{lem:1d BMs}
	Let $\alpha$ be measurable. Then the dynamics \eqref{eq:1d coupled overdamped} can be written as 
	\begin{equation}
	\label{eq:1d coupled BM}
	\mathrm{d}
	\begin{pmatrix}
	X_t \\ Y_t
	\end{pmatrix} = 
	\begin{pmatrix}
	-V'(X_t) \\ -V'(Y_t)
	\end{pmatrix} \mathrm{d}t + \sqrt{2}  
	\begin{pmatrix}
	\mathrm{d}B^x_t \\ \mathrm{d}B^y_t
	\end{pmatrix},
	\end{equation} 
	with two Brownian motions $(B^x_t)_{t \ge 0}$ and $(B^y_t)_{t \ge 0}$ that are in general not independent.
\end{lemma}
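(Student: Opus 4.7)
The plan is to define candidate Brownian motions directly from the noise term of \eqref{eq:1d coupled overdamped} and then invoke L\'evy's characterisation theorem to identify them as standard Brownian motions. Concretely, set
\begin{equation*}
B^x_t := \int_0^t \cos\beta(X_s,Y_s) \,\mathrm{d}W^x_s + \int_0^t g(X_s,Y_s)\sin\beta(X_s,Y_s)\,\mathrm{d}W^y_s,
\end{equation*}
and analogously for $B^y_t$, with the two rows of $G$ interchanged. Because $|\cos\beta| \le 1$ and $|g\sin\beta|\le 1$ uniformly, these It\^o integrals are well defined as soon as $\alpha$ is Borel measurable and $(X_t,Y_t)_{t\ge 0}$ is an adapted (continuous) process; this is precisely the mild regularity alluded to in Remark \ref{rem:regularity alpha}.

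Next I would compute the mutual quadratic variations. Since $G(x,y)G(x,y)^T = Q(x,y)$ --- a fact one checks entrywise using $\cos^2\beta + g^2 \sin^2\beta = 1$ together with $2g\sin\beta\cos\beta = g\sin(2\beta) = \sgn(\alpha)|\alpha| = \alpha$, with the harmless convention that when $\alpha = 0$ one has $\sin\beta = 0$ and the factor of $g$ becomes irrelevant --- one obtains
\begin{equation*}
\langle B^x\rangle_t = \langle B^y\rangle_t = t, \qquad \langle B^x,B^y\rangle_t = \int_0^t \alpha(X_s,Y_s)\,\mathrm{d}s.
\end{equation*}
L\'evy's characterisation theorem then immediately implies that $(B^x_t)_{t\ge 0}$ and $(B^y_t)_{t\ge 0}$ are each standard one-dimensional Brownian motions with respect to the filtration generated by $(W^x,W^y)$. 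The non-vanishing cross-variation shows that they are in general not independent, which accounts for the closing clause of the lemma.

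Finally, substituting the definitions of $B^x$ and $B^y$ back into \eqref{eq:1d coupled overdamped} one sees that $\sqrt{2}\,G(X_t,Y_t)(\mathrm{d}W^x_t,\mathrm{d}W^y_t)^T$ coincides with $\sqrt{2}\,(\mathrm{d}B^x_t,\mathrm{d}B^y_t)^T$, so that \eqref{eq:1d coupled overdamped} and \eqref{eq:1d coupled BM} describe the same dynamics. I do not foresee a substantial obstacle: the argument reduces to a short trigonometric verification of $G G^T = Q$ followed by L\'evy's theorem. The only point deserving genuine attention is that boundedness of the entries of $G$ makes the construction go through for merely measurable $\alpha$, which is precisely the \emph{minimal regularity} message that this lemma is designed to isolate.
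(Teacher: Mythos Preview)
Your proof is correct and follows essentially the same approach as the paper: the paper invokes an auxiliary lemma on random orthogonal transformations of Brownian motion (applied to each row of $G$ with $N=1$, $M=2$), which is itself nothing but L\'evy's characterisation together with the row-wise orthogonality $\cos^2\beta + g^2\sin^2\beta = 1$ that you verify directly. The only cosmetic difference is that the paper does not bother computing the cross-variation, since the lemma only claims each marginal is a Brownian motion.
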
 
\begin{proof}
	The claim follows from applying Lemma \ref{lem:random orthogonal transformations} to the components of the SDE
	\begin{equation}
	\begin{pmatrix}
	\mathrm{d}B^x_t \\ \mathrm{d}B^y_t
	\end{pmatrix} =  G
	\begin{pmatrix}
	\mathrm{d} W^x_t \\ \mathrm{d} W^y_t
	\end{pmatrix},
	\end{equation}
	noting that $G^x := (g \cos \beta  \, \sin \beta)$ and $G^y := (g \sin \beta  \, \cos \beta)$ indeed satisfy condition \eqref{eq:orthogonality condition} with $N=1$ and $M=2$.
\end{proof}
Note that the function $\alpha$ (equivalently the pair $\beta$ and $g$) encodes the coupling between the Brownian motions $(B^x_t)_{t \ge 0}$ and $(B^y_t)_{t \ge 0}$. The parameter $\beta \in [0,\frac{\pi}{4}]$ is related to the \emph{strength} of the coupling, whereas $g \in \{-1,1\}$ is related to its \emph{direction}.
Indeed, if $\beta \equiv 0$, then $(B_t^x)_{t \ge 0}$ and $(B_t^y)_{t \ge 0}$ are independent (`trivial coupling', see \eqref{eq:1d independent coupling} and Example \ref{ex:independent coupling}).
If $\beta \equiv \frac{\pi}{4}$ and $g \equiv 1$, then  $B^x_t = B^y_t$ for $t \ge 0$, almost surely (`synchronous coupling'). Likewise, if $\beta \equiv \frac{\pi}{4}$ and $g \equiv -1$, then $B^x_t = -B^y_t$ (`mirror coupling').
\begin{remark}[Ergodic couplings]
	\label{rem:erg loose couplings}
	If the bound \eqref{eq:1d overdamped constraint} is satisfied with strict inequalities, then the generator \eqref{eq:1d overdamped full generator} is elliptic, and hence, by Corollary \ref{cor:regular couplings}, the coupled process is ergodic. Let us mention that this condition is not necessary for ergodicity. Indeed, consider the coupling operator $\Gamma = -2 \partial_x \partial_y$, inducing the so-called `two-point motion' \cite{Baxendale1991}, i.e. $(X_t)_{t \ge 0}$ and $(Y_t)_{t \ge 0}$ are driven by the same Brownian motion, only differing by their initial laws. Under mild regularity conditions (i.e. Lipschitz continuity of the coefficients), it can be shown that $(X_t,Y_t)_{t \ge 0}$ is ergodic with respect to $\bar{\pi}_\Gamma = \frac{1}{Z} e^{-V(x)}\delta_{x-y}  (\mathrm{d}x\mathrm{d}y)$, see for instance \cite[Theorem 2.1]{LPP2015}.
\end{remark} 
\subsubsection{The general case} 
\label{ex:overdamped}
Here, we will extend the discussion from the previous section to the general case of $n$ particles moving in $d$ dimensions, i.e. we are concerned with couplings of the dynamics
\begin{equation}
\label{eq:overdamped trivial coupling}
\mathrm{d}X^i_t = -\nabla V(X^i_t)\,\mathrm{d}t + \sqrt{2} \,\mathrm{d}W^i_t, \quad i=1,\ldots, n,
\end{equation}
the processes $(X_t^i)_{t \ge 0}$ being $\mathbb{R}^d$-valued. If the Brownian motions $(W_t^i)_{t \ge 0}$ are independent, then the joint process $(\bar{X}_t)_{t \ge 0} = (X^1_t,\ldots,X^n_t)_{t \ge 0}$ is ergodic with respect to the product measure $\bar{\pi}_0 = \bigotimes_{i=1}^n \pi_i$ on $\mathbb{R}^{nd}$ (see Example \ref{ex:independent coupling})
and the corresponding generator is given by
\begin{equation}
\label{eq:generator overdamped}
\bar{\mathcal{L}}_0 = \sum_{i=1}^n \mathcal{L}_i, \quad \mathcal{L}_i = -\nabla V(x_i) \cdot \nabla_{x_i} + \Delta_{x_i} = -\sum_{k=1}^d (\partial_k^{i})^*\partial_k^{i}.
\end{equation}
Here, $\partial_k^{i}$ denotes the derivative with respect to the $k$-th component of $x_i$, and the adjoints are taken in the spaces $L^2(\pi_i)$. Clearly, the generators $\mathcal{L}_i$ are decomposed as in \eqref{eq:generator decomposition}, with $A_k^i = \partial_k^i$ and $B^i = 0$.
\begin{remark}
	\label{rem:general dynamics overdamped}
	Instead of \eqref{eq:overdamped trivial coupling}, we can also consider the more general dynamics
	\begin{equation}
	\mathrm{d}X^i_t = -Q_i(X^i_t)\nabla V(X^i_t)\,\mathrm{d}t + (\nabla \cdot Q_i)(X^i_t)\, \mathrm{d}t + J_i \nabla V(X_t^i)\,\mathrm{d}t +  \sqrt{2Q_i(X^i_t)}\,\mathrm{d}W^i_t,
	\end{equation} 
	with $i=1,\ldots,n$, $J_i \in \mathbb{R}^{d \times d}_{\mathrm{skew}}$ being skew-symmetric matrices and $Q_i :\mathbb{R}^d \rightarrow \mathbb{R}^{d \times d}_{\mathrm{sym}}$ being positive definite matrix-valued functions, as discussed in \cite[Section 2]{DNP2017}. 
	Note that in this case the processes $(X^i_t)_{t \ge 0}$ are not copies of each other, since $Q_i$ and $J_i$ may not be the same for different particles.
\end{remark}
To construct nontrivial couplings, we may set 
\begin{equation}
\label{eq:overdamped Gamma}
\Gamma = \sum_{i,j =1, i\neq j}^n \sum_{k,l=1}^d \alpha_{ijkl} \partial_k^i\partial_l^j
\end{equation}
for appropriate\footnote{Concerning the regularity of these functions, the discussion from the previous section applies, see in particular Remark \ref{rem:regularity alpha}.}  functions $\alpha_{ijkl}:\mathbb{R}^{nd} \rightarrow \mathbb{R}$, following Section \ref{sec:general construction}. Note that $\partial_k^i\partial_l^j$ is symmetric with respect to the interchange of indices $(i,k) \leftrightarrow (j,l)$, and so we may assume that $\alpha_{ijkl} = \alpha_{jilk}$. As in the one-dimensional case, the generator of the coupled system $\bar{\mathcal{L}}_{\Gamma} = \bar{\mathcal{L}}_0 + \Gamma$ is a second order differential operator which we require to be (possibly degenerately) elliptic in order for the second condition of Definition \ref{def:coupling operators} to be satisfied (again with reference to Courr{\`e}ge's theorem). 

To derive more easily verifiable conditions on the functions $\alpha_{ijkl}$, let us introduce a matrix-valued function (or \emph{matrix field}) $Q:(\mathbb{R}^d)^n \rightarrow \mathbb{R}^{nd \times nd}$ as follows. Firstly, it is helpful to view the target space of $Q$ as $\mathbb{R}^{nd \times nd} \equiv (\mathbb{R}^{d \times d})^{n \times n}$, i.e. we think of $Q(x_1,\ldots,x_n)$ as an $n \times n$-matrix the entries of which are themselves $d \times d$ matrices. In other words, $Q_{ij}(x_1,\ldots,x_n)$ is a $d\times d$ matrix for every pair $(i,j) \in \{1,\ldots,n\}^2$. The matrix field $Q$ can then be defined by
\begin{equation}
\label{eq:Q definition}
Q_{ij}(x_1,\ldots,x_n) = 
\begin{cases}
I_{d \times d},\quad & i=j, \\
\alpha_{ij}(x_1,\ldots,x_n),\quad & i \neq j,
\end{cases}
\end{equation}
where $\alpha_{ij}(x_1,\ldots,x_n) \in \mathbb{R}^{d \times d}$ denotes the matrix with entries $(\alpha_{ijkl}(x_1,\ldots,x_n))_{k,l=1,\ldots,d}$. Note that $Q(x_1,\ldots,x_n)$ as defined in \eqref{eq:Q definition} is symmetric as a matrix in $\mathbb{R}^{nd \times nd}$ by our assumption that $\alpha_{ijkl} = \alpha_{jilk}$. 
\begin{remark}
	The matrices $Q_{ij}$ can now be thought of as describing the coupling between the particles $i$ and $j$.
\end{remark}
As in the one-dimensional case, the generator of the fully coupled system can be written as
\begin{equation}
\label{eq:overdamped full generator}
\bar{\mathcal{L}}_{\Gamma} = \bar{\mathcal{L}}_0 + \Gamma = \nabla_z U(z) \cdot \nabla_z + Q(z) : \nabla_z \nabla_z, 
\end{equation}
introducing the notation $z\equiv (x_1,\ldots,x_n) \in \mathbb{R}^{nd \times nd}$ and $U(z) = \sum_{i=1}^n V(x_i)$. 
To make the connection to SDEs and arrive at a description analogous to \eqref{eq:1d coupled overdamped}, let us consider matrix fields $G:(\mathbb{R}^d)^n \rightarrow (\mathbb{R}^{d \times d})^{n \times n}$ satisfying
\begin{equation}
\label{eq:G constraint}
\sum_{j=1}^n G_{ij}(x_1,\ldots,x_n) G_{ij}(x_1,\ldots,x_n)^T = I_{d \times d},
\end{equation}
for all $i=1,\ldots,n$. Here the transposition $^T$ is taken in $\mathbb{R}^{d \times d}$. Note that matrix fields of this form give rise to the matrix fields $Q:(\mathbb{R}^d)^n \rightarrow \mathbb{R}^{nd \times nd}$ defined in \eqref{eq:Q definition} via $Q=GG^T$ (where the transposition is taken in $\mathbb{R}^{nd \times nd}$). 
\begin{remark}
	\label{rem:orthogonal matrices}
	The advantage of constructing the coupling in terms of the matrix field $G$ is that pointwise positive semi-definiteness of $Q$ is automatically satisfied.
	A practical way to fulfil the constraint \eqref{eq:G constraint} is to choose matrix fields $g_{ij} :(\mathbb{R}^d)^n \rightarrow \mathbb{R}^{d \times d}$ that are orthogonal pointwise, i.e.
	\begin{equation}
	\label{eq:g orthogonality}
	g_{ij}(x_1,\ldots,x_n) g_{ij}(x_1,\ldots,x_n)^T = g_{ij}(x_1,\ldots,x_n)^T g_{ij}(x_1,\ldots,x_n) = I_{d \times d},
	\end{equation} 
	for all $i,j=1,\ldots,n$ and $(x_1,\ldots,x_n) \in (\mathbb{R}^d)^n$, as well as weights $w_{ij}:(\mathbb{R}^d)^n \rightarrow \mathbb{R}$ satisfying
	\begin{equation}
	\label{eq:weights}
	\sum_{j=1}^n w_{ij}^2(x_1,\ldots,x_n) = 1,
	\end{equation}
	for all $i=1,\ldots,n$. Then, setting $G_{ij} = w_{ij} g_{ij}$, condition \eqref{eq:G constraint} holds. Intuitively, the orthogonal matrices $g_{ij}$ encode the coupling between  particle $i$ and $j$ through a rotation of the noise. The weights $w_{ij}$ can be interpreted as the relative coupling strengths between the particles. Observe that both $g_{ij}$ and $w_{ij}$ may depend on $(x_1,\ldots,x_n)$, i.e. on the locations of all the particles. This construction is a direct generalisation of \eqref{eq:1d coupling matrix}. 
\end{remark}
Assuming $GG^T=Q$, the dynamics associated to the generator \eqref{eq:overdamped full generator} is (again, at least formally) given by
\begin{equation}
\label{eq:overdamped full SDE}
\mathrm{d}X_t^i = - \nabla V(X_t^i) \,\mathrm{d}t + \sqrt{2}\sum_{j=1}^n G_{ij}(X_t^1,\ldots,X_t^n) \,\mathrm{d}W_t^j, \quad i=1,\ldots,n,
\end{equation} 
where $\left((W_t^j)_{t\ge 0}\right)_{j=1}^n$ are assumed to be independent standard Brownian motions. As in the one-dimensional case, we have the following lemma:
\begin{lemma}
	\label{lem:overdamped BM}
	There exist $\mathbb{R}^d$-valued standard Brownian motions $\left((B_t^j)_{t \ge 0}\right)_{ j=1}^n$, not necessarily independent, such that the dynamics \eqref{eq:overdamped full SDE} can be written as
	\begin{equation}
	\label{eq:overdamped full BM}
	\mathrm{d}X_t^i = - \nabla V(X_t^i)\, \mathrm{d}t + \sqrt{2} \,\mathrm{d}B_t^i, \quad i=1,\ldots,n.
	\end{equation}
\end{lemma}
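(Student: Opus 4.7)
The plan is to construct the Brownian motions $B^i$ directly from the driving noises $W^j$ and apply Lévy's characterisation (equivalently, to invoke Lemma \ref{lem:random orthogonal transformations}, just as was done in the one-dimensional case in the proof of Lemma \ref{lem:1d BMs}). Concretely, I would set
\begin{equation*}
B_t^i := \sum_{j=1}^n \int_0^t G_{ij}(X_s^1,\ldots,X_s^n)\,\mathrm{d}W_s^j, \qquad i=1,\ldots,n,
\end{equation*}
so that substituting into \eqref{eq:overdamped full SDE} immediately yields the form \eqref{eq:overdamped full BM}. Since each $G_{ij}$ is assumed measurable and (implicitly) locally bounded enough for the original SDE to make sense, the stochastic integrals above are well defined and each $B^i$ is a continuous $\mathbb{R}^d$-valued local martingale starting at the origin.

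The key step is then to verify that $B^i$ has the quadratic variation of a standard $d$-dimensional Brownian motion. For components $k,l \in \{1,\ldots,d\}$ one computes, using the independence of the $(W^j)_{j=1}^n$ and Itô's isometry,
\begin{equation*}
\langle (B^i)_k,(B^i)_l\rangle_t
= \sum_{j=1}^n \int_0^t \sum_{m=1}^d (G_{ij})_{km}(G_{ij})_{lm}\,\mathrm{d}s
= \int_0^t \Bigl(\sum_{j=1}^n G_{ij}G_{ij}^T\Bigr)_{kl}\,\mathrm{d}s
= t\,\delta_{kl},
\end{equation*}
where the last equality is precisely the constraint \eqref{eq:G constraint}. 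Lévy's characterisation then concludes that $(B_t^i)_{t\ge 0}$ is a standard $\mathbb{R}^d$-valued Brownian motion. Alternatively, this is exactly the hypothesis of Lemma \ref{lem:random orthogonal transformations} applied row-by-row to the matrix field $G$, with $N=d$ and $M=nd$, so the argument can be stated as a direct appeal to that lemma, parallel to the proof of Lemma \ref{lem:1d BMs}.

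Finally, I would remark that the lemma makes no claim whatsoever about the joint law of the family $\{B^i\}_{i=1}^n$: for $i\ne i'$ the quadratic covariations $\langle (B^i)_k,(B^{i'})_l\rangle_t = \int_0^t (Q_{ii'})_{kl}\,\mathrm{d}s$ are in general nonzero, and indeed it is precisely this dependence, encoded by the off-diagonal blocks of $Q=GG^T$, that implements the nontrivial coupling of the particles. The only real obstacle is the usual one when passing to stochastic integrals with measurable (rather than continuous) coefficients, namely ensuring $\int_0^t \|G_{ij}(X_s^1,\ldots,X_s^n)\|^2\,\mathrm{d}s<\infty$ almost surely; this is inherited from whatever assumptions guarantee existence of solutions to \eqref{eq:overdamped full SDE} in the first place, and is therefore not a new difficulty.
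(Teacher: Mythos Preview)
Your proposal is correct and follows essentially the same approach as the paper: the paper's proof simply states that the argument is identical to that of Lemma~\ref{lem:1d BMs}, which in turn is an application of Lemma~\ref{lem:random orthogonal transformations} (i.e., L\'evy's characterisation) using the constraint \eqref{eq:G constraint}. Your explicit computation of the quadratic variation and the remark about the off-diagonal blocks $Q_{ii'}$ encoding the coupling are spot on and spell out what the paper leaves implicit.
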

\begin{proof}
	The argument is identical to the one used in the proof of Lemma \ref{lem:1d BMs}.
\end{proof}
Concerning the ergodicity of couplings, we have similar findings to those of Remark \ref{rem:erg loose couplings}. If the matrix field \eqref{eq:Q definition} is positive definite at every point (i.e. the generator \eqref{eq:overdamped full generator} is elliptic), then Corollary \ref{cor:regular couplings} implies that the coupled process is ergodic. An example of nonergodic couplings can be found in \cite[Section 3.1]{LPP2015}. 
\begin{example}[Two particles]
	\label{ex:overdamped 2 particles}
	The foregoing constructions become more explicit when considering only $n=2$ particles. To simplify the notation, we denote their positions by $x \equiv x_1$ an $y \equiv x_2$.
	The diffusion matrices \eqref{eq:Q definition} reduce to
	\begin{equation}
	Q(x,y) = 
	\begin{pmatrix}
	I_{d \times d} & \alpha(x,y) \\
	\alpha^T(x,y) & I_{d \times d}
	\end{pmatrix},
	\end{equation}
	with $\alpha:(\mathbb{R}^d)^2 \rightarrow \mathbb{R}^{d \times d}$. Using Schur complements \cite[Appendix 5.5]{BV2004}, we see that $Q(x,y)$ is positive semidefinite if and only if
	\begin{equation}
	\label{eq:overdamped 2 alpha bound}
	\alpha(x,y)^T  \alpha(x,y) \le I_{d \times d},
	\end{equation}
	in the sense of symmetric matrices (Loewner ordering). The corresponding coupling operator is given by
	\begin{equation}
	\label{eq:Gamma 2 overdamped}
	(\Gamma f)(x,y) = 2\Tr\left(\alpha^T(x,y) \nabla^2_{xy}f(x,y)\right), \quad f \in C_c^{\infty}(\mathbb{R}^d \times \mathbb{R}^d),
	\end{equation} where the matrix of mixed derivatives $\nabla^2_{xy}\phi$ is given by
	$
	\left(\nabla_{xy}^2 \phi \right)_{ij} = (\partial_{x_i}\partial_{y_j}\phi)_{ij}
	$.
	In order to illustrate the construction from Remark \ref{rem:orthogonal matrices}, let $g:(\mathbb{R}^d \times \mathbb{R}^d) \rightarrow \mathbb{R}^{d \times d}$ be a field of orthogonal matrices (see \eqref{eq:g orthogonality}) and put 
	\begin{equation}
	G(x,y) =
	\begin{pmatrix}
	\cos \beta(x,y) I_{d \times d} & \sin \beta (x,y) g(x,y) \\
	\sin \beta(x,y)g^T(x,y) & \cos \beta (x,y) I_{d \times d}
	\end{pmatrix},
	\end{equation}
	the function $\beta:\mathbb{R}^d \times \mathbb{R}^d \rightarrow [0,\frac{\pi}{4}]$ again regulating the strength of the coupling, and being associated with the weights $w_{ij}$ in \eqref{eq:weights}. From $GG^T = Q$ it follows that $\alpha$ and $g$ are connected via 
	\begin{equation}
	\label{eq:alpha g}
	\alpha = 2 \cos \beta \sin \beta \cdot g.
	\end{equation}
\end{example}

\subsection{Underdamped Langevin dynamics}
\label{ex:underdamped}
For fixed $\gamma > 0$ (`friction') and symmetric positive definite $M \in \mathbb{R}^{d\times d}_{\mathrm{sym}}$ (`mass'), the dynamics 
\begin{subequations}
	\begin{align}
	\mathrm{d}q_t & =  M^{-1}p_t \,\mathrm{d}t, \\
	\mathrm{d}p_t & =  -\nabla V(q_t) \,\mathrm{d}t - \gamma p_t \,\mathrm{d}t + \sqrt{2\gamma} \,\mathrm{d} W_t,
	\end{align}
\end{subequations}
is ergodic with respect to the measure
\begin{equation}
\pi = \frac{1}{\bar{Z}}e^{-\left(V(q) + \frac{1}{2}p^T M^{-1}p\right)} \mathrm{d}q\mathrm{d}p, \quad (q,p) \in \mathbb{R}^{2d},
\end{equation}
$\bar{Z}$ being an appropriate normalisation constant (see \cite[Chapter 6]{pavliotis2014stochastic} for details).
The generator is given by
\begin{subequations}
	\begin{align}
	\label{eq:generator underdamped}
	\mathcal{L} & = M^{-1}p \cdot \nabla_q - \nabla_q V(q) \cdot \nabla_p + \gamma (-p\cdot \nabla_p + \Delta_p) \\
	\label{eq:decomp underdamped}
	& = B - \sum_{k=1}^d A_k^* A_k,
	\end{align}
\end{subequations}	
where $\mathcal{T} = M^{-1}p \cdot \nabla_q - \nabla_q V(q) \cdot \nabla_p$ is skew-symmetric in $L^2(\pi)$, $A_k = \sqrt{\gamma} \partial_{p_k}$, and the adjoint is taken in $L^2(\pi)$. To construct a coupled sampler of $n$ processes, we may proceed as in the overdamped case and set
\begin{equation}
\Gamma = \sum_{i,j =1, i\neq j}^n \sum_{k,l=1}^d \alpha_{ijkl} \partial_{p_k}^i\partial_{p_l}^j,
\end{equation} 
for appropriate functions $\alpha_{ijkl}:(\mathbb{R}^{2d})^n \rightarrow \mathbb{R}$, denoting by $\partial_{p_k}^i$ the derivative with respect to the $k$-th component of $p$ of the $i$-th particle. Following very closely the discussion in Section  \ref{sec:overdamped_all}, we can introduce matrix fields $Q:(\mathbb{R}^{2d})^n \rightarrow (\mathbb{R}^{d \times d})^{ n \times n}$ and $G:(\mathbb{R}^{2d})^n \rightarrow (\mathbb{R}^{d \times d})^{ n \times n}$ satisfying \eqref{eq:Q definition} and \eqref{eq:G constraint} (with $(x_1,\ldots,x_n)$ replaced by $(q_1,p_1,\ldots,q_n,p_n)$) such that the generator of the coupled system is given by 
\begin{equation}
\label{eq:underdamped coupling operator}
\bar{\mathcal{L}}_{\Gamma} = \sum_{i=1}^n \left( M^{-1} p_i \cdot \nabla_{q_i} - \nabla_q V(q_i) \cdot \nabla_{p_i} - \gamma p_i \cdot \nabla_{p_i}\right) + Q(z):\nabla_{z_p} \nabla_{z_p},
\end{equation}
making again use of the notations $z \equiv (q_1,p_1,\ldots,q_n,p_n)$, $z_p \equiv (p_1,\ldots,p_n)$ and such that the associated dynamics are given by
\begin{subequations}
	\begin{align}
	\mathrm{d}q^i_t & =  M^{-1}p^i_t \,\mathrm{d}t, \\
	\mathrm{d}p^i_t & =  -\nabla_q V(q^i_t) \,\mathrm{d}t - \gamma p^i_t \,\mathrm{d}t + \sqrt{2\gamma} \sum_{j=1}^n G_{ij}(q_1,p_1,\ldots,q_n,p_n) \,\mathrm{d}W^j_t,
	\end{align}
\end{subequations}
for $i=1,\ldots,n$. Also in this case, it is straightforward to see that an appropriate version of the Lemmas \ref{lem:1d BMs} and \ref{lem:overdamped BM} holds.
\begin{remark}
	Generalising the above to the case where the particles have different frictions $\gamma_i$ and masses $M_i$ or some or all of them move according to perturbed versions of underdamped Langevin dynamics as considered in \cite{DNP2017} is straightforward (see also Remark \ref{rem:general dynamics overdamped}).
\end{remark} 
\begin{example}[Overdamped and underdamped Langevin dynamics]
	\label{ex:overdamped underdamped}
	It is also possible to couple different types of dynamics (such as overdamped and underdamped Langevin dynamics). For instance, consider the generators
	\begin{subequations}
		\begin{align}
		\mathcal{L}_1 & = -\nabla V(x) \cdot\nabla_x + \Delta_x, \\
		\mathcal{L}_2 & = M^{-1}p \cdot \nabla_q - \nabla_q V(q) \cdot \nabla_p + \gamma (-p\cdot \nabla_p + \Delta_p),
		\end{align}
	\end{subequations}
	as in \eqref{eq:generator overdamped} and \eqref{eq:generator underdamped}.
	Setting 
	\begin{equation}
	\Gamma = \sum_{k,l=1}^d \alpha_{kl}(x,q,p) \partial_{x_k}\partial_{p_l}
	\end{equation}
	and following along the lines of Sections \ref{sec:overdamped_all} and \ref{ex:underdamped} will result in the coupled dynamics
	\begin{subequations}
		\begin{align}
		\mathrm{d}X_t & = -\nabla V(X_t) \,\mathrm{d}t + \sqrt{2} \mathrm{d}B^{(1)}_t, \\
		\mathrm{d}q_t & = M^{-1}p_t \,\mathrm{d} t, \\
		\mathrm{d}p_t & = - \nabla_q V(q) \mathrm{d}t - \gamma p_t \,\mathrm{d}t + \sqrt{2\gamma}\,\mathrm{d}B_t^{(2)},
		\end{align}
	\end{subequations}
	where the Brownian motions $(B_t^{(1)})_{t \ge 0}$ and $(B_t^{(2)})_{t \ge 0}$ are in general not independent (and the exact dependence results from the choice of the functions $\alpha_{kl}$).
\end{example}
\subsection{The zigzag process} 
\label{sec:zigzag}
In recent years, there has been a growing interest in using piecewise deterministic Markov processes (PDMPs) \cite{Davis1984} in the context of sampling problems. These are processes that move deterministically between random events, usually along the trajectories of an ODE. At those events, a random transition (e.g. a `jump') occurs. Both the deterministic dynamics as well as the random transitions can be chosen with a great deal of flexibility, resulting in a range of possible PDMP algorithms. Let us mention here the bouncy particle sampler (BPS) \cite{BVD2017}, the zigzag sampler \cite{BFR2016}, randomised Hamiltonian Monte Carlo \cite{BouRabee17},  and event-chain Monte Carlo techniques \cite{MKK2014,Michel17}. The recent papers \cite{fearnhead2016piecewise} and \cite{VBDD2017} provide good overviews in a general framework.

The objective of this section is to show how the framework from Section \ref{sec:general framework} can be employed in the construction of coupled samplers from piecewise deterministic Markov processes, using the example of the zigzag process.
For ease of exposition, we furthermore restrict our attention to the one-dimensional case. The treatment here follows \cite{BFR2016} and \cite{BRZ2018} in style and notation.

The state space under consideration is $E = \mathbb{R} \times \{-1,+1\}$, and the generator of the zigzag process reads
\begin{equation}
\label{eq:zigzag generator}
\mathcal{L} f(x,\theta) = \theta \partial_x f(x,\theta) + \lambda(x,\theta) \left(f(x,-\theta) - f(x,\theta)\right), \quad f \in C_c^{\infty}(E),
\end{equation} 
where the \emph{switching rate} $\lambda$ is given by
\begin{equation}
\lambda(x,\theta) = \max(0,\theta V'(x)) + \gamma(x).
\end{equation}
Here, $\gamma:\mathbb{R} \rightarrow \mathbb{R}_{\ge 0}$ is a nonnegative continuous function, called the \emph{excess switching rate}.
Roughly speaking, the zigzag process moves along straight lines in the direction determined by $\theta \in \{-1,1\}$. At random times a switch occurs, i.e. $\theta$ is replaced by $-\theta$. Those events are sampled according to the switching rate $\lambda$, i.e. at a point $(x,\theta) \in E$, the probability for the switch $\theta \mapsto -\theta$ in the time span $[t,t + \varepsilon]$ is given by $\lambda(x,t) \varepsilon + o(\varepsilon)$. For more details on the construction and simulation of zigzag processes we refer the reader to \cite{BD2017} an \cite{BFR2016}, as well as to \cite{Davis1984} for more general piecewise deterministic Markov processes. According to \cite[Proposition 1]{BD2017} and \cite[Appendix B.2]{VBDD2017}, the zigzag process satisfies the Feller property;  for more general piecewise deterministic Markov processes this topic has been studied in \cite[see Theorem 27.6]{Davis1993}.
The measure
\begin{equation}
\label{eq:zigzag invariant measure}
\pi = \frac{1}{2Z} e^{-V(x)} \mathrm{d}x \otimes (\delta_{-1} + \delta_{+1})
\end{equation}
is invariant, and, under some additional assumptions\footnote{See \cite{BD2017} and \cite{BRZ2018}. Let us mention in particular that ergodicity is guaranteed whenever the excess switching rate $\gamma$ is strictly positive.}, ergodic.
The generator \eqref{eq:zigzag generator} can be decomposed in the form
\begin{equation}
\mathcal{L} = -A^*A + B,
\end{equation}
where
\begin{equation}
\label{eq:zigzag decomp}
A= \left( \frac{1}{4}\vert V'\vert + \frac{1}{2}\gamma \right)^{1/2}(R-1), \quad B = \theta\partial_x + \frac{1}{2}\theta V'(R-1),
\end{equation}
and the `flip operator' $R$ is given by
\begin{equation}
\quad (Rf)(x,\theta) = f(x,-\theta), \quad f \in C_c^{\infty}(\bar{E}).
\end{equation}
A short calculation shows that indeed $B$ is antisymmetric in $L^2(\pi)$, whereas $A$ is symmetric.

To construct a coupled sampler from two zigzag processes, let us introduce the following notation:
We consider the state space $\bar{E} = \mathbb{R}^2 \times \{-1,+1\}^2$, denoting its elements by $(x,y,\theta_x,\theta_y)$. Furthermore, we will make use of the flip operators
\begin{equation}
(R_x f)(x,y,\theta_x,\theta_y) = f(x,y,-\theta_x,\theta_y), \quad 
(R_y f)(x,y,\theta_x,\theta_y) = f(x,y,\theta_x,-\theta_y).
\end{equation}
Following Section \ref{sec:general construction}, let us set
\begin{equation}
\Gamma = \alpha(x,y,\theta_x,\theta_y) (R_x - 1)(R_y - 1),
\end{equation}
for an appropriate function $\alpha:\bar{E} \rightarrow \mathbb{R}$, i.e. $\Gamma$ acts as
\begin{subequations}
	\label{eq:zigzag Gamma}
	\begin{align}
	(\Gamma f)&(x,y,\theta_x,\theta_y) =  \alpha(x,y,\theta_x,\theta_y) \cdot  \tag{\ref*{eq:zigzag Gamma}}\\
	& \cdot \left(f(x,y,\theta_x,\theta_y) - f(x,y,-\theta_x,\theta_y) - f(x,y,\theta_x,-\theta_y) + f(x,y,-\theta_x,-\theta_y)\right)  \nonumber
	\end{align}
\end{subequations}
on test functions $f \in C_c^{\infty}(\bar{E})$. Note that $\Gamma$ vanishes on functions that either depend on only $x$ and $\theta_x$ or only on $y$ and $\theta_y$.
The next task is to obtain bounds on $\alpha$ that ensure that the second condition in Definition \ref{def:coupling operators} is satisfied. To this end, let us expand
\begin{subequations}
	\label{eq:generator 1d zigzag}
	\begin{align}
	(\bar{\mathcal{L}}_{\Gamma}f)(x,y,\theta_x,\theta_y) =\, &(\mathcal{L}_x + \mathcal{L}_y + \Gamma)f(x,y,\theta_x,\theta_y) 
	\\
	=\, &\theta_x \partial_x f(x,y,\theta_x,\theta_y) + \theta_y \partial_y f(x,y,\theta_x,\theta_y)
	\label{eq:zigzag derivatives}
	\\
	&-\left(\lambda(x,\theta_x) + \lambda(y,\theta_y) - \alpha(x,y,\theta_x,\theta_y)\right) f(x,y,\theta_x,\theta_y)
	\label{eq:lambda alpha1}
	\\
	&+ \left(\lambda(x,\theta_x) - \alpha(x,y,\theta_x,\theta_y) \right) f(x,y,-\theta_x,\theta_y) \\
	&+ \left(\lambda(y,\theta_y) - \alpha(x,y,\theta_x,\theta_y) \right) f(x,y,\theta_x,-\theta_y)
	\label{eq:lambda alpha3}
	\\
	\label{eq:double switch}
	&+\alpha(x,y,\theta_x,\theta_y)f(x,y,-\theta_x,-\theta_y).
	\end{align}
\end{subequations}
For \eqref{eq:generator 1d zigzag} to be the generator of a Markov process (in particular, for it to satisfy the positive maximum principle), the following inequalities have to be satified:
\begin{subequations}
	\label{eq:nonnegativity conditions}
	\begin{align}
	\lambda(x,\theta_x)  + \lambda(y,\theta_y) - \alpha(x,y,\theta_x,\theta_y) & \ge 0, \\
	\lambda(x,\theta_x) - \alpha(x,y,\theta_x,\theta_y) & \ge 0, \\
	\lambda(y,\theta_y) - \alpha(x,y,\theta_x,\theta_y) & \ge 0, \\
	\alpha(x,y,\theta_x,\theta_y) & \ge 0, \quad\quad (x,y,\theta_x,\theta_y) \in \bar{E}.
	\end{align}
\end{subequations}
These conditions can be interpreted as saying that the transition probabilities for the coupled piecewise deterministic Markov process cannot be negative. Clearly, the conditions \eqref{eq:nonnegativity conditions} are equivalent to 
\begin{equation}
\label{eq:zig zag constraint}
0 \le \alpha(x,y,\theta_x,\theta_y) \le \min \left(\lambda(x,\theta_x),\lambda(y,\theta_y)\right), \quad (x,y,\theta_x,\theta_y) \in \bar{E}.
\end{equation}
Let us briefly comment on the dynamical behaviour that the coupling operator \eqref{eq:zigzag Gamma} introduces. As can be seen from \eqref{eq:double switch}, $\alpha$ is connected to `double flips', i.e. the event that both particles change their directions at the same time. Setting $\alpha$ to either the lower or the upper bound in \eqref{eq:zig zag constraint} will either discourage or encourage those double flips. As in the case of the overdamped and underdamped Langevin dynamics, the coupling behaviour (encoded in $\alpha$) is allowed to depend on the point $(x,y,\theta_x,\theta_y) \in \bar{E}$. We also remark that the process generated by $\bar{\mathcal{L}}_\Gamma$ as in \eqref{eq:generator 1d zigzag} can be simulated conveniently by using the methods summarised in \cite[Appendix B]{BD2017}. 
\begin{remark}
	The construction in this section can be generalised to couplings of multiple zigzag processes in arbitrary dimensions by following a similar approach to the one taken in Section \ref{sec:overdamped_all}.
\end{remark}
\section{Asymptotic variance and optimal transport}
\label{ch:coupling_asymvar}
In the following we analyse the asymptotic variance associated to estimators based on coupled processes (Section \ref{sec:coupling_CLT}) and connect the result to the theory of optimal transportation (Section \ref{sec:OT}).
\subsection{A central limit theorem for coupled processes}
\label{sec:coupling_CLT}
The objective of this section is to establish a central limit theorem characterising the convergence in \eqref{eq:extended ergodicity} and to find an expression for the associated asymptotic variance in terms of ergodic coupling operators $\Gamma \in \mathcal{G}^0$ and invariant measures $\bar{\pi}_\Gamma$. In particular, our aim is to compare between estimators based on couplings (as in \eqref{eq:extended ergodicity}) and the one-particle estimators \eqref{eq:ergodic averages}.
Naturally, Assumption \ref{ass:marginal ergodicity} is still in force. Moreover, let us assume the following:
\begin{assumption}[Invertibility of the one-particle generators]
	\label{ass:one-particle decay}
	The generators $\mathcal{L}_i$ are invertible on $L_0^2(\pi_i)$, i.e. for all $f \in L_0^2(\pi_i)$ there exists $\phi \in \mathcal{D}(\mathcal{L}_i) \cap L_0^2(\pi_i)$ such that
	\begin{equation}
	\label{eq:invertibility}
	-\mathcal{L}_i \phi = f.
	\end{equation}
\end{assumption}
It is well-known that the validity of the foregoing assumption is guaranteed by sufficiently fast decay of the semigroups $(S_t^i)_{t \ge 0}$ in $L^2(\pi_i)$, see for instance \cite{KomorowskiLandimOlla2012}.
In the following, let us fix observables of interest $f_i \in L_0^2(\pi_i)$ and denote the corresponding solutions to the Poisson equations \eqref{eq:invertibility} by $\phi_i$. Supposing $X^i_0 \sim \pi_i$, Assumption \ref{ass:one-particle decay} implies the central limit theorems
\begin{equation}
\label{eq:coupling_CLT}
\sqrt{T} \left( \frac{1}{T} \int_0^T f_i(X^i_t) \,\mathrm{d}t - \pi_i(f_i) \right) \xrightarrow[T \rightarrow \infty]{d} \mathcal{N}(0,2\sigma_{f_i}^2),
\end{equation}
where the asymptotic variances are given by
\begin{equation}
\label{eq:asymvar_onep}
\sigma_{f_i}^2 = \langle f_i, \phi_i \rangle_{L^2(\pi_i)},
\end{equation} 
see \cite{B1982,KomorowskiLandimOlla2012}.

We will now establish a similar central limit theorem for the coupled process $(\bar{X}_t)_{t \ge 0}$ induced by ergodic coupling operators $\Gamma \in \mathcal{G}^0$ and associated to extended observables of the form
\begin{equation}
F = \frac{1}{n} \sum_{i=1}^n f_i.
\end{equation}
\begin{theorem}
	\label{thm:asym var}
	\emph{Central limit theorem for coupled processes.}
	Let Assumption \ref{ass:one-particle decay} be satisfied and assume that $\Gamma \in \mathcal{G}^0$. Furthermore, let $\bar{X}_0 \sim \bar{\pi}_\Gamma$.
	Then
	\begin{equation}
	\label{eq:extended_CLT}
	\sqrt{T}\left(\frac{1}{T}\int_0^T F(\bar{X}_t)\,\mathrm{d}t - \bar{\pi}_{\Gamma}(F) \right) \xrightarrow[T \rightarrow \infty]{d} \mathcal{N}(0,2\sigma_{F}^2).
	\end{equation} 
	The asymptotic variance $\sigma_F^2$ is given by
	\begin{equation}
	\label{eq:asym_var}
	\sigma_F^2 = \frac{1}{n^2} \sum_{i=1}^n \sigma_{f_i}^2 - \int_{\bar{E}} \bar{\mathcal{L}}_0 \xi \,\mathrm{d} \bar{\pi}_{\Gamma}, 
	\end{equation}
	where
	\begin{equation}
	\label{eq:definition xi}
	\xi = \frac{1}{n^2}\sum_{i < j} \phi_i \phi_j,
	\end{equation}
	the functions $(\phi_i)_{i = 1}^n$ being the solutions to the Poisson equations \eqref{eq:invertibility}.
\end{theorem}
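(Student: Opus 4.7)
The plan is to reduce the claim to the standard central limit theorem for additive functionals of ergodic Markov processes by explicitly solving the associated Poisson equation on the coupled space, and then to simplify the resulting inner product using the kernel property of $\Gamma$ and the fact that $\bar{\pi}_\Gamma$ has the $\pi_i$ as marginals.

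First I would construct a solution $\Phi\in L^2(\bar{\pi}_\Gamma)$ to $-\bar{\mathcal{L}}_\Gamma\Phi = F$. The natural candidate is
\begin{equation*}
\Phi = \frac{1}{n}\sum_{i=1}^n \phi_i,
\end{equation*}
where each $\phi_i$ solves \eqref{eq:invertibility}. Indeed, $\bar{\mathcal{L}}_0\Phi = \frac{1}{n}\sum_i \mathcal{L}_i\phi_i = -F$, since $\mathcal{L}_k\phi_i = 0$ for $k\neq i$ when $\phi_i$ is viewed as a function on $\bar{E}$, and since the kernel property of $\Gamma$ (Definition \ref{def:coupling operators}, condition \ref{it:kernel}), extended to $\mathcal{D}(\mathcal{L}_i)$ by closability of $\bar{\mathcal{L}}_\Gamma$ and density of $C_c^\infty(E_i)$ in $\mathcal{D}(\mathcal{L}_i)$, implies $\Gamma\phi_i = 0$. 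Membership of $\Phi$ in $L^2(\bar{\pi}_\Gamma)$ follows from the marginal property $P_i\bar{\pi}_\Gamma = \pi_i$ together with $\phi_i\in L^2(\pi_i)$.

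Next I would invoke the standard CLT for ergodic Markov processes (for instance via the Kipnis--Varadhan martingale approximation, cf. \cite{KomorowskiLandimOlla2012}): writing
\begin{equation*}
\int_0^T F(\bar{X}_s)\,\mathrm{d}s = \Phi(\bar{X}_0) - \Phi(\bar{X}_T) + M_T,
\end{equation*}
with $(M_T)_{T\ge 0}$ a square-integrable martingale obtained from Dynkin's formula applied to $\Phi(\bar{X}_t)$, the martingale CLT combined with ergodicity of $\bar{\pi}_\Gamma$ yields \eqref{eq:extended_CLT} with $\sigma_F^2 = \langle F, \Phi\rangle_{L^2(\bar{\pi}_\Gamma)}$.

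It then remains to verify that this inner product has the form \eqref{eq:asym_var}. Expanding
\begin{equation*}
\sigma_F^2 = \frac{1}{n^2}\sum_{i,j=1}^n \int_{\bar{E}} f_i\,\phi_j\,\mathrm{d}\bar{\pi}_\Gamma,
\end{equation*}
the diagonal terms $i=j$ reduce to $\int_{E_i}f_i\phi_i\,\mathrm{d}\pi_i = \sigma_{f_i}^2$ by the marginal property, producing the first summand in \eqref{eq:asym_var}. For the off-diagonal contributions I would exploit that $\phi_i$ depends only on $x_i$ and $\phi_j$ only on $x_j$, so that $\mathcal{L}_k(\phi_i\phi_j) = 0$ for $k \notin \{i,j\}$, while $\mathcal{L}_i(\phi_i\phi_j) = -f_i\phi_j$ and $\mathcal{L}_j(\phi_i\phi_j) = -\phi_i f_j$. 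Summing over $i<j$ and relabelling the index pair gives the pointwise identity $\bar{\mathcal{L}}_0\xi = -\frac{1}{n^2}\sum_{i\neq j}f_i\phi_j$, so that integration against $\bar{\pi}_\Gamma$ recovers exactly the second summand in \eqref{eq:asym_var}.

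The main technical obstacle is the justification of the Poisson equation step: one must ensure that $\Phi$ lies in an appropriate extended domain of $\bar{\mathcal{L}}_\Gamma$ (either the $L^2(\bar{\pi}_\Gamma)$-closure or the extended generator framework of Remark \ref{rem:constants}) so that the martingale approximation is applicable, and that the kernel property $\Gamma\phi_i = 0$ transfers from $C_c^\infty(E_i)$ to $\phi_i\in\mathcal{D}(\mathcal{L}_i)$. Once this domain issue is settled (via closability, which is built into condition \ref{it:generator} of Definition \ref{def:coupling operators}, together with the fact that $C_c^\infty(E_i)$ is a core for $\mathcal{L}_i$), the remaining calculations are algebraic and the CLT itself is a direct consequence of classical results.
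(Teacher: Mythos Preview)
Your proposal is correct and follows essentially the same route as the paper: construct $\Phi=\frac{1}{n}\sum_i\phi_i$ as an explicit solution of the coupled Poisson equation via the kernel property of $\Gamma$, apply the standard CLT to obtain $\sigma_F^2=\langle F,\Phi\rangle_{L^2(\bar{\pi}_\Gamma)}$, and then split this inner product into diagonal and off-diagonal parts, rewriting the latter as $-\int_{\bar E}\bar{\mathcal{L}}_0\xi\,\mathrm{d}\bar{\pi}_\Gamma$. The paper cites Bhattacharya's CLT \cite{B1982} rather than the Kipnis--Varadhan framework, and is somewhat less explicit about the domain issue you flag, but the argument is otherwise identical.
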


\begin{remark}
	Note that in the case of the trivial coupling $\Gamma = \mathbf{0}$ (see Example \ref{ex:independent coupling}), we have that
	\begin{equation}
	\int_{\bar{E}} \bar{\mathcal{L}}_0 \xi \,\mathrm{d} \bar{\pi}_{0} = 0,
	\end{equation}
	since $\bar{\pi}_0$ is the invariant measure associated to the process generated by $\bar{\mathcal{L}}_0$. Therefore in this case, we obtain the result that the asymptotic variance $\sigma_F^2$ is given by the arithmetic mean of the asymptotic variances $\sigma_{f_i}^2$ of the one-particle processes, divided by $n$. This result is expected, since the computational cost of computing the evolution of the processes $(X^i_t)_{t \ge 0}$ is likewise increased by a factor of $n$.
\end{remark}
\begin{remark}
	Observe furthermore that
	\begin{equation}
	\int_{\bar{E}} \bar{\mathcal{L}}_{\Gamma} \xi \,\mathrm{d}\bar{\pi}_{\Gamma} = 0,
	\end{equation}
	hence using $\bar{\mathcal{L}}_{\Gamma} = \bar{\mathcal{L}}_0 + \Gamma$
	we can equivalently express the asymptotic variance as
	\begin{equation}
	\label{eq:asym var Gamma}
	\sigma_F^2 = \frac{1}{n^2} \sum_{i=1}^n \sigma_{f_i}^2 + \int_{\bar{E}} \Gamma \xi \,\mathrm{d} \bar{\pi}_{\Gamma}.
	\end{equation}
\end{remark}
\begin{proof}[Proof of Theorem \ref{thm:asym var}]
	First observe that by the fact that $\bar{\pi}_\Gamma$ is a coupling of $(\pi_i)_{i=1}^n$, we have that $\bar{\pi}_\Gamma(F) = 0$. The Poisson equation
	\begin{equation}
	\label{eq:extended_Poisson}
	-\bar{\mathcal{L}}_{\Gamma} \Phi = F, \quad \bar{\pi}_{\Gamma}(\Phi) = 0, 	
	\end{equation}
	has a solution given by
	\begin{equation}
	\Phi = \frac{1}{n} \sum_{i=1}^n \phi_i,
	\end{equation}
	resting on the fact that $\Gamma \Phi = 0$ by the first condition of Definition \ref{def:coupling operators}. Again, the condition $\bar{\pi}_\Gamma(\Phi) = 0$ is satisfied by the coupling property of $\bar{\pi}_\Gamma$. Using \cite[Theorem 2.1]{B1982}, we see that the  central limit theorem \eqref{eq:extended_CLT} holds with asymptotic variance
	\begin{equation}
	\sigma_F^2 = \langle F, \Phi \rangle_{L^2(\bar{\pi}_{\Gamma})}.
	\end{equation}
	Expanding the above yields
	\begin{subequations}
		\begin{eqnarray}
		\sigma_{F}^{2} & = & \int_{\bar{E}} \left( \frac{1}{n}  \sum_{i=1}^n f_i \right) \left( \frac{1}{n} \sum_{j=1}^n \phi_j \right) \mathrm{d} \bar{\pi}_{\Gamma}  \nonumber 
		=  \frac{1}{n^2} \sum_{i=1}^n \int_{\bar{E}} f_i \phi_i \,\mathrm{d} \bar{\pi}_{\Gamma} + \frac{1}{n^2} \sum_{\substack{i,j = 1 \\ i \neq j}}^n \int_{\bar{E}} f_i \phi_j \,\mathrm{d} \bar{\pi}_{\Gamma} \nonumber \\
		& = & \frac{1}{n^2} \sum_{i = 1}^n \sigma_{f_i}^2 - \int_{\bar{E}} \bar{\mathcal{L}}_{0} \xi \,\mathrm{d}\bar{\pi}_{\Gamma}, \nonumber  
		\end{eqnarray}
	\end{subequations}
	where in the last equation we used the fact that $\bar{\pi}_{\Gamma}$ has marginal $\pi_i$ in the $i$-th coordinate, expression \eqref{eq:asymvar_onep}, as well as the definition of $\xi$ in \eqref{eq:definition xi}. 
\end{proof}
\subsection{Connections to the theory of optimal transportation}
\label{sec:OT}

In this section we will always assume that Assumption \ref{ass:one-particle decay} is satisfied, so that the central limit theorems from the previous section hold.
Theorem \ref{thm:asym var} then shows that, in order to reduce the asymptotic variance, we are led to the problem of minimising the expression
\begin{equation*}
\int_{\bar{E}} (-\bar{\mathcal{L}}_0 \xi)\, \mathrm{d}\bar{\pi}_{\Gamma}.
\end{equation*}
Remarkably, this expression depends on $\Gamma$ through the measure $\bar{\pi}_{\Gamma}$ only\footnote{Another way of saying this is that the map $\Gamma \mapsto \sigma_F^2$ factors through the map $\Gamma \mapsto \bar{\pi}_\Gamma$, i.e. with respect to the asymptotic variance, no information is lost by considering only the invariant measure of the joint process.}. We provide a sketch of this situation in Figure \ref{fig:commutative}. 
\begin{figure}
	\centering
	\large{
		
		\begin{tikzpicture}[every node/.style={midway}]
		
		\matrix[column sep={8em,between origins},
		row sep={8em}] at (0,0)		
		{ \node(fp)   {$\mathcal{G}^0 \ni \Gamma$}  ; & \node(gen) {$\bar{\pi}_\Gamma \in \mathcal{C}^0(\pi_1,\ldots,\pi_n)$}; \\
			
			\node(schr) {$\sigma_F^2(\bar{\mathcal{L}}_{\Gamma})$};                   \\};
		
		\draw[<-|] (schr) -- (fp) node[anchor=east]  {};
		
		\draw[<-|] (schr) -- (gen) node[anchor=north]  {};
		
		\draw[|->] (fp)   -- (gen) node[anchor=south] {};
		
		\end{tikzpicture}}
	
	\caption{Relationship between ergodic coupling operators, admissible couplings between the marginal invariant measures and the associated asymptotic variance. The diagramme commutes, in particular, all the information relevant for computing the asymptotic variance is contained in the invariant measure.}
	\label{fig:commutative}
\end{figure}
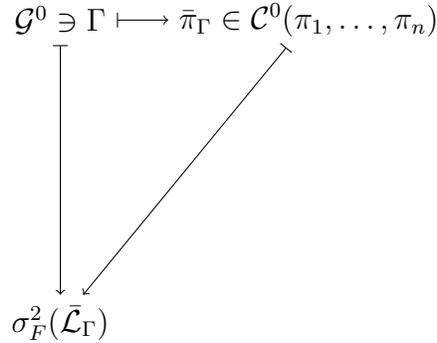
Since $\bar{\pi}_\Gamma$ has fixed marginals (i.e. they do not depend on $\Gamma$), this task is very reminiscent of the Kantorovich problem \cite[Chapter 1]{V2009} appearing in the theory of optimal transportation \cite{V2003,V2009}. To make this connection more precise, let us introduce the following terminology:

\begin{definition}[Admissible couplings]
	\label{def:admissible couplings}
	The set of couplings of the marginal invariant measures $(\pi_i)_{i=1}^n$ will be denoted by $\mathcal{C}$.
	A coupling $\bar{\pi} \in \mathcal{C}$ is called \emph{admissible}, if it arises as the invariant measure of an ergodically coupled process, i.e. if there exists $\Gamma \in \mathcal{G}^0$ such that 
	\begin{equation}
	\int_{\bar{E}} \bar{\mathcal{L}}_\Gamma f\,\mathrm{d}\bar{\pi} = 0,
	\end{equation}
	for all $f \in \mathcal{D}(\bar{\mathcal{L}}_\Gamma)$. The set of admissible measures will be denoted by $\mathcal{C}^0$, or, stressing the dependence on the marginal measures, by $\mathcal{C}^0(\pi_1,\ldots,\pi_n)$.
\end{definition}
Our aim in this section can be summarised in the following form, only replacing $\mathcal{C}$ by the subset $\mathcal{C}^0$ in the standard formulation of the Kantorovich problem:
\begin{problem}
	\label{prob:OT C0}
	For a fixed cost function $c \in C_b(\bar{E})$, find $\bar{\pi} \in \mathcal{C}^0$ such that
	\begin{equation}
	\label{eq:OT problem C0}
	\bar{\pi} \in \argmin_{\bar{\pi} \in \mathcal{C}^0} \int_{\bar{E}} c \, \mathrm{d}\bar{\pi}.
	\end{equation}
	Equivalently, find minimisers of the function
	\begin{equation}
	\label{eq:OT objective}
	\mathcal{G}^0 \rightarrow \mathbb{R}, \quad \Gamma \mapsto \int_{\bar{E}} c \, \mathrm{d}\bar{\pi}_\Gamma.
	\end{equation} 
\end{problem}
\begin{remark}
	As already pointed out, setting
	$c = -\bar{\mathcal{L}}_0\xi,
	$
	with $\xi$ as defined in \eqref{eq:definition xi},
	is equivalent to the problem of optimising the asymptotic variance for a particular observable. Other choices for $c$ might be of interest. For instance, one might aim to optimise the asymptotic variance across a set of observables simultaneously. In this case, it seems reasonable to consider cost functions of the form $c = \sum_{j} c_j$, where $c_j$ is the cost function associated with the $j$th observable.
	Assuming that all the particles evolve in the same state space $E$, another natural objective would be to maximise the average distance of the particles at equilibrium, leading to a cost function of the type
	\begin{equation}
	c(x_1,\ldots,x_n) = -\sum_{\substack{i,j = 1 \\ i \neq j}}^n d(x_i,x_j),
	\end{equation}
	for some metric $d$ on $E$. More generally, for some function $g: \mathbb{R} \rightarrow \mathbb{R}$ it might be worthwhile to consider
	\begin{equation}
	c(x_1,\ldots,x_n) = -\sum_{\substack{i,j = 1 \\ i \neq j}}^n g(d(x_i,x_j)).
	\end{equation}
	A cost function of this type would be reasonable if one aims to use the empirical measure of an ensemble of particles in order to precondition the dynamics (see \cite{LMW2018}), in which case the particles should neither be too close nor too far away from each other. We emphasize that since in our framework the marginal processes are held fixed, our results are not directly applicable to the algorithm presented in \cite{LMW2018}.  However, we expect that the results might be generalised to this context.
	
	Let us also remark that the assumption $c \in C_b(\bar{E})$ is mainly for technical convenience and both the continuity and the boundedness assumption can be weakened. Since we are interested is situations where the process $(\bar{X})_{t \ge 0}$ takes values in a compact set with high probability (i.e. the target measures $\pi_i$ are concentrated in a compact set), boundedness of $c$ is not a severe restriction.  
\end{remark}
\begin{remark}
	\label{rem:OT lower bound}
	Clearly, it holds that
	\begin{equation}
	\inf_{\bar{\pi} \in \mathcal{C}^0} \int_{\bar{E}} c \,\mathrm{d}\bar{\pi} \ge \inf_{\bar{\pi} \in \mathcal{C}} \int_{\bar{E}} c \,\mathrm{d}\bar{\pi},
	\end{equation} 
	so the solutions to the usual optimal transport problems provide lower bounds for Problem \ref{prob:OT C0}.
	
	In the Kantorovich formulation,  the cost function $c$ is often induced by a distance (for instance $c(x,y) = (d(x,y))^p$ for $n=2$, $1 \le p < \infty$), penalising couplings that put probability mass on pairs of points $(x,y)$ where $x$ and $y$ are far apart from each other (hence the name optimal transport). In the setting of MCMC (in particular in the context of variance reduction), it is plausible to encourage the particles to stay away from each other, leading to sample diversity and improved exploration of the state space. In this respect, our setting bears certain similarities with the use of optimal transport problems in functional density theory, see \cite{CFK2013}. 
\end{remark}
The set $\mathcal{C}^0$ depends on the generators $\mathcal{L}_i$. Furthermore, $\mathcal{C}^0$ is a strict subset of $\mathcal{C}$. The support of an ergodic invariant measure for a Markov process with continuous paths is necessarily connected, for instance, while in general the support of a coupling is not. The following example illustrates that $\mathcal{C}^0$ is indeed usually significantly smaller than $\mathcal{C}$:
\begin{example}[The set $\mathcal{C}^0$ contains only few singular measures]
	\label{ex:singular invariant measures}
	Consider the setting from Section \ref{sec:overdamped 1d}, i.e. two particles moving in one dimension according to overdamped Langevin dynamics. Let us fix a coupling $\Gamma \in \mathcal{G}^0$ and assume that the invariant measure $\bar{\pi}_\Gamma$ is supported on the zero set of a smooth function $H:\bar{E} \rightarrow \mathbb{R}$ with nowhere vanishing gradient, i.e. 
	\begin{equation}
	\label{eq:invariant measure zero set}
	\supp \bar{\pi}_\Gamma \subseteq \{(x,y) \in \bar{E}: \quad H(x,y) = 0\}.
	\end{equation}
	This implies that $\bar{\pi}_\Gamma$ is supported on a submanifold of $\bar{E}$ and is hence necessarily singular with respect to the Lebesgue measure. Frequently, optimisers of standard optimal transport problems are of this type (see for instance \cite[Theorem 1.2]{MPW2012}). It{\^o}'s formula implies that
	\begin{equation}
	H(\bar{X}_t) = H(\bar{X}_0) + \int_0^t (G^T \nabla H)(\bar{X}_s)\cdot \mathrm{d}W_s + \int_0^t (\bar{\mathcal{L}}_\Gamma H) (\bar{X}_s) \, \mathrm{d}s, \quad t \ge 0,
	\end{equation}
	where $G$ is given in \eqref{eq:1d coupling matrix}. Choosing the initial condition $\bar{X}_0 \sim \bar{\pi}_\Gamma$ results in $H(\bar{X}_t) = H(\bar{X}_0) = 0$ almost surely, for all $t \ge 0$. It then follows that both of the remaining integral terms individually have to be zero (owing to the decomposition into martingale and bounded variation part). The quadratic variation of the martingale part is given by
	\begin{equation}
	\int_0^t w(\bar{X}_s) \, \mathrm{d}s,\quad w = (\partial_x H)^2 + 4 \cos \beta \sin \beta \cdot (\partial_x H) (\partial_y H) + (\partial_y H)^2. 
	\end{equation} 
	Since the quadratic variation has be to be zero for all $t \ge 0$, it follows that $\partial_x H = \pm \partial_y H$ on $\supp \bar{\pi}_\Gamma$. Since $H$ is smooth with nonvanishing gradient, it turns out that $\supp \bar{\pi}_\Gamma$ is  contained in either one of the diagonals $x=y$ or $x=-y$, in fact either $\bar{\pi}_\Gamma = \frac{1}{Z}e^{-V(x)} \delta_{x-y}(\mathrm{d}x\mathrm{d}y)$ or $\bar{\pi}_\Gamma = \frac{1}{Z}e^{-V(x)} \delta_{x+y}(\mathrm{d}x\mathrm{d}y)$, noting that the latter is only possible if $V$ has the symmetry property $V(x) = V(-x)$. We conclude that, at least in the example considered here, $\mathcal{C}^0$ contains only very few singular measures. 
\end{example}  
From the theory of optimal transportation it is known that solutions of the Kantorovich problem are typically quite singular, in the sense that they are supported on small sets (see for instance \cite[Theorem 1.2]{MPW2012}).
As Example \ref{ex:singular invariant measures} shows, these measures often do not belong to $\mathcal{C}^0$. The aim of this section is to show a similar singularity property for Problem \ref{prob:OT C0}. Informally speaking, we will see that under reasonable conditions, the optimisers of \eqref{eq:OT objective} are not attained for coupling operators in the interior of $\mathcal{G}^0$.
To make this statement precise,  let us fix the decompositions
\begin{equation}
\label{eq:generator decomposition2}
\mathcal{L}_i = -\sum_{k=1}^{K_i} (A_k^i)^*A_k^i + B^i
\end{equation}
of the underlying generators (see Section \ref{sec:general construction}) and lay the focus on coupling operators of the form \eqref{eq:Gamma construction}, denoting this set by $\mathcal{G}(A)$:
\begin{equation}
\label{def:G(A)}
\mathcal{G}(A) = \left\{\Gamma = \sum_{(i,j,k,l) \in \mathcal{J}} \alpha_{ijkl} A_{k}^i A_{l}^j: \quad \alpha_{ijkl}: \bar{E} \rightarrow \mathbb{R} \right\} \cap \mathcal{G},
\end{equation}
where we recall the set $\mathcal{J}$ of admissible indices, defined in \eqref{eq:admissible indices}.
We wish to stress however that the distinction between $\mathcal{G}(A)$ and $\mathcal{G}$ is often obsolete (see Proposition \ref{prop:general construction}). The subset of ergodic coupling operators will similarly be denoted by $\mathcal{G}^0(A)$. Let us now introduce the `tangent space' to $\mathcal{G}(A)$:
\begin{equation}
\label{eq:def tangent space}
T \mathcal{G}(A) = \left\{\sum_{(i,j,k,l) \in \mathcal{J}}\alpha_{ijkl} A_{k}^i A_{l}^j\, \vert \,\alpha_{ijkl} \in C_c^{\infty} (\bar{E}) \right\}.
\end{equation} 
\begin{remark}
	The definition of $T\mathcal{G}(A)$ encapsulates the first condition of Definition \ref{def:coupling operators} in the sense that elements of $T\mathcal{G}(A)$ vanish on functions that only depend on one variable, whereas the second condition is not accounted for.
\end{remark}
In order to state our main result, we need the following definition:
\begin{definition}[Interior points]
	\label{def:interior point}
	An operator $\Gamma \in \mathcal{G}^0(A)$ is called an \emph{interior point}, if
	\begin{enumerate}
		\item 
		\label{it:curves}
		for all $\mathrm{d}\Gamma \in T\mathcal{G}(A)$ there exists $C>0$ such that $\bar{\mathcal{L}}_\Gamma + \varepsilon \mathrm{d}\Gamma \in \mathcal{G}^0(A)$ for all ${\varepsilon \in (-C,C)}$,
		\item
		\label{it:invertibility} 
		the operator $\bar{\mathcal{L}}_\Gamma$ is invertible on $L_0^2(\bar{\pi}_\Gamma)$,
		\item 
		\label{it:continuous composition}
		$\mathrm{d}\Gamma \mathcal{L}^{-1}_\Gamma f \in C(\bar{E})$ for all $\mathrm{d}\Gamma \in T\mathcal{G}(A)$ and all $f \in C(\bar{E}) \cap L_0^2(\bar{\pi}_\Gamma)$.  
	\end{enumerate}
\end{definition}
\begin{remark}
	The first condition is the essence of the foregoing definition, describing the geometric intuition of interior points. The third condition is mostly technical, since in applications $\mathcal{L}_{\Gamma}^{-1}$ usually possesses sufficient smoothing properties in order for the composition $\mathrm{d}\Gamma \mathcal{L}_\Gamma^{-1}$ to preserve continuity.  
\end{remark}
\begin{remark}[Lyapunov functions]
	It is possible and often convenient to replace the second condition by the weaker requirement that invertibility holds on a suitable subspace $\mathcal{V}_\Gamma$ of $L_0^2(\bar{\pi}_\Gamma)$. Our results in this section will then continue to hold, provided that the cost function $c$ satisfies $c - \bar{\pi}_\Gamma(c) \in \mathcal{V}_\Gamma$ for all interior points $\Gamma$. As an example, assume that there exist Lyapunov functions $\mathcal{K}_i: E_i \rightarrow [1,\infty)$ for the  one-particle dynamics, i.e.
	\begin{equation}
	\mathcal{L}_i \mathcal{K}_i \le -a_i \mathcal{K}_i + b_i,
	\end{equation} 
	for suitable constants $a_i > 0$, $b_i \ge 0$. Defining $\bar{\mathcal{K}} = \sum_{i =1}^n \mathcal{K}_i$, it follows immediately from $\Gamma \bar{\mathcal{K}} = 0$ that $\bar{\mathcal{K}}$ is a Lyapunov function for $\bar{\mathcal{L}}_\Gamma$, independently of the coupling operator $\Gamma$. Under certain minorisation (irreducibility) conditions (see \cite{HairerMattingly2011}, \cite[Chapter 2.4]{LS2016}), one can show that $\mathcal{L}_\Gamma^{-1}$ is invertible on 
	\begin{equation}
	\mathcal{V}_\Gamma = \left\{ f \in L_0^2(\bar{\pi}_\Gamma): \, \left\Vert \frac{f}{\bar{\mathcal{K}}}\right\Vert_{\infty} < \infty\right\}.
	\end{equation}
	See also \cite[Theorem 3.2]{GlynnMeyn1996}.
\end{remark}
\begin{example}
	\label{ex:interior loose}
	In the setting of Section \ref{sec:overdamped 1d} (overdamped Langevin dynamics), it is straightforward to see that $\Gamma$ as defined in \eqref{eq:1d overdamped Gamma} satisfies condition \ref{it:curves} of Definition \ref{def:interior point} if and only if $-1 < \alpha(x,y) < 1$ for all $(x,y) \in \mathbb{R}^2$, i.e. if and only if the bound \eqref{eq:1d overdamped constraint} is strict. More generally, $\Gamma$ as defined in \eqref{eq:overdamped Gamma} satisfies condition \ref{it:curves} if and only if the matrix $Q$ as defined in \eqref{eq:Q definition} is (strictly) positive definite pointwise. Those conditions are clearly equivalent to the (pointwise) ellipticity of the corresponding generators $\bar{\mathcal{L}}_\Gamma$. In the case of underdamped Langevin dynamics (Section \ref{ex:underdamped}), analogous statements are valid. Similarly, couplings of zigzag processes (Section \ref{sec:zigzag}) satisfy condition \ref{it:curves} if and only if the bound \eqref{eq:zig zag constraint} is strictly satisfied. 
\end{example}
For our further discussion, we will need the following derivative formula:
\begin{proposition}
	\label{prop:derivative}
	Let $\Gamma \in \mathcal{G}^0(A)$ be an interior point, $\mathrm{d}\Gamma \in T\mathcal{G}(A)$, and consider the family of operators $\bar{\mathcal{L}}_{\Gamma} + \varepsilon\mathrm{d}\Gamma \in \mathcal{G}^0$, for $\varepsilon$ small enough. Let the associated family of invariant measures be denoted by $\bar{\pi}^{\varepsilon}_{\Gamma}$ and fix $c \in C_b(\bar{E})$. Then the function $\varepsilon \mapsto \int_{\bar{E}} c \,\mathrm{d}\bar{\pi}^{\varepsilon}_{\Gamma}$ is differentiable in $\varepsilon = 0$, and the derivative is given by
	\begin{equation}
	\label{eq:derivative}
	\frac{\mathrm{d}}{\mathrm{d}\varepsilon} \bigg\rvert_{\varepsilon = 0} \left( \int_{\bar{E}} c \, \mathrm{d}\bar{\pi}^{\varepsilon}_{\Gamma} \right) = - \int_{\bar{E}} c \left[ \bar{\mathcal{L}}_{\Gamma} ^*\right]^{-1} (\mathrm{d}\Gamma^* \mathbf{1}) \,\mathrm{d}\bar{\pi}_{\Gamma}^{0},  
	\end{equation}
	where the adjoints are taken in $L^2(\bar{\pi}_{\Gamma}^0)$.
\end{proposition}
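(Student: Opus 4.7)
The plan is to exploit the Poisson equation associated with $\bar{\mathcal{L}}_\Gamma$ together with the invariance identity for $\bar{\pi}^\varepsilon_\Gamma$, in the spirit of linear response formulas for Markov processes. By condition \ref{it:invertibility} of Definition \ref{def:interior point}, define
\begin{equation*}
\phi := -\bar{\mathcal{L}}_\Gamma^{-1}\!\left( c - \bar{\pi}^0_\Gamma(c) \right) \in L^2_0(\bar{\pi}^0_\Gamma),
\end{equation*}
so that $-\bar{\mathcal{L}}_\Gamma \phi = c - \bar{\pi}^0_\Gamma(c)$. Since $\bar{\pi}^\varepsilon_\Gamma$ is invariant for $\bar{\mathcal{L}}_\Gamma + \varepsilon \,\mathrm{d}\Gamma$, testing against $\phi$ yields $\int \bar{\mathcal{L}}_\Gamma \phi \, \mathrm{d}\bar{\pi}^\varepsilon_\Gamma = -\varepsilon \int \mathrm{d}\Gamma \, \phi \, \mathrm{d}\bar{\pi}^\varepsilon_\Gamma$, which after substituting the Poisson equation becomes the \emph{exact} difference-quotient identity
\begin{equation*}
\frac{1}{\varepsilon}\!\left( \int_{\bar{E}} c \,\mathrm{d}\bar{\pi}^\varepsilon_\Gamma - \int_{\bar{E}} c \,\mathrm{d}\bar{\pi}^0_\Gamma \right) = \int_{\bar{E}} \mathrm{d}\Gamma \, \phi \, \mathrm{d}\bar{\pi}^\varepsilon_\Gamma,
\end{equation*}
valid for every sufficiently small $\varepsilon \neq 0$.

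The next step is to pass to the limit $\varepsilon \to 0$, which requires weak convergence of $\bar{\pi}^\varepsilon_\Gamma$ to $\bar{\pi}^0_\Gamma$ applied to the test function $\mathrm{d}\Gamma \, \phi$. Here condition \ref{it:continuous composition} of Definition \ref{def:interior point} is crucial, as it guarantees $\mathrm{d}\Gamma \, \phi \in C(\bar{E})$; boundedness comes for free from boundedness of $c$ and the regularising properties of $\bar{\mathcal{L}}_\Gamma^{-1}$ typical in our examples. Since all measures $\bar{\pi}^\varepsilon_\Gamma$ share the fixed marginals $(\pi_i)_{i=1}^n$, the family is automatically tight by the multimarginal version of \cite[Lemma 4.4]{V2009} already used in the proof of Lemma \ref{lem:coupled ergodicity}. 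Combining tightness with the continuity of $\mathrm{d}\Gamma \, \phi$ gives subsequential convergence of the right-hand side, and the difference-quotient identity itself forces the limit to be unique, yielding
\begin{equation*}
\frac{\mathrm{d}}{\mathrm{d}\varepsilon}\bigg\rvert_{\varepsilon=0}\!\int_{\bar{E}} c \,\mathrm{d}\bar{\pi}^\varepsilon_\Gamma = \int_{\bar{E}} \mathrm{d}\Gamma \, \phi \,\mathrm{d}\bar{\pi}^0_\Gamma.
\end{equation*}

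The final step recasts the right-hand side in the adjoint form appearing in \eqref{eq:derivative}. Since $\mathrm{d}\Gamma$ annihilates functions of a single variable (Definition \ref{def:coupling operators}, condition \ref{it:kernel}), applying the defining adjoint relation with test function $f = \mathbf{1}$ gives $\int \mathrm{d}\Gamma^* \mathbf{1}\,\mathrm{d}\bar{\pi}^0_\Gamma = \int \mathrm{d}\Gamma\,\mathbf{1}\,\mathrm{d}\bar{\pi}^0_\Gamma = 0$, so $\mathrm{d}\Gamma^* \mathbf{1} \in L^2_0(\bar{\pi}^0_\Gamma)$ and $[\bar{\mathcal{L}}_\Gamma^*]^{-1}(\mathrm{d}\Gamma^* \mathbf{1})$ is well-defined by condition \ref{it:invertibility}. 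Two applications of the adjoint relation in $L^2(\bar{\pi}^0_\Gamma)$ then give
\begin{equation*}
\int_{\bar{E}} \mathrm{d}\Gamma \, \phi \,\mathrm{d}\bar{\pi}^0_\Gamma = \int_{\bar{E}} \phi \cdot \mathrm{d}\Gamma^* \mathbf{1} \,\mathrm{d}\bar{\pi}^0_\Gamma = -\int_{\bar{E}} \bigl(c - \bar{\pi}^0_\Gamma(c)\bigr) \cdot [\bar{\mathcal{L}}_\Gamma^*]^{-1}(\mathrm{d}\Gamma^* \mathbf{1}) \,\mathrm{d}\bar{\pi}^0_\Gamma,
\end{equation*}
and the constant $\bar{\pi}^0_\Gamma(c)$ drops out because $[\bar{\mathcal{L}}_\Gamma^*]^{-1}(\mathrm{d}\Gamma^* \mathbf{1})$ is centred.

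The algebraic steps (Poisson substitution and the two adjoint manipulations) are essentially forced once the setup is in place; the main obstacle is the limit passage in the middle paragraph, which is where all three conditions of Definition \ref{def:interior point} are simultaneously needed. Condition \ref{it:curves} is what makes the perturbed operators lie in $\mathcal{G}^0$ so that $\bar{\pi}^\varepsilon_\Gamma$ exists in the first place, condition \ref{it:invertibility} provides the Poisson solution $\phi$, and condition \ref{it:continuous composition} produces the continuous bounded test function $\mathrm{d}\Gamma \, \phi$ against which weak convergence is applicable. An alternative, more quantitative route would be a resolvent expansion $(\bar{\mathcal{L}}_\Gamma + \varepsilon\,\mathrm{d}\Gamma)^{-1} = \bar{\mathcal{L}}_\Gamma^{-1} - \varepsilon \bar{\mathcal{L}}_\Gamma^{-1}\,\mathrm{d}\Gamma\, \bar{\mathcal{L}}_\Gamma^{-1} + O(\varepsilon^2)$ combined with duality; this would additionally yield differentiability of $\bar{\pi}^\varepsilon_\Gamma$ in a stronger topology, but is not needed for the statement at hand.
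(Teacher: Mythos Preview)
Your argument is correct in substance and takes a genuinely different route from the paper. The paper works on the \emph{dual} side: it sets $g=-(\bar{\mathcal{L}}_\Gamma^*)^{-1}\mathrm{d}\Gamma^*\mathbf{1}$, verifies by direct expansion that $\int \bar{\mathcal{L}}_\Gamma^\varepsilon c\,(1+\varepsilon g)\,\mathrm{d}\bar{\pi}_\Gamma^0 = O(\varepsilon^2)$, combines this with the invariance identity $\int \bar{\mathcal{L}}_\Gamma^\varepsilon c\,\mathrm{d}\bar{\pi}_\Gamma^\varepsilon=0$, and then applies a second-order pseudo-inverse $Q_\varepsilon = \Pi^\perp\bar{\mathcal{L}}_\Gamma^{-1}\Pi^\perp - \varepsilon\,\Pi^\perp\bar{\mathcal{L}}_\Gamma^{-1}\Pi^\perp\mathrm{d}\Gamma\,\Pi^\perp\bar{\mathcal{L}}_\Gamma^{-1}\Pi^\perp$ to obtain $\int c\,(1+\varepsilon g)\,\mathrm{d}\bar{\pi}_\Gamma^0 - \int c\,\mathrm{d}\bar{\pi}_\Gamma^\varepsilon = O(\varepsilon^2)$. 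This is exactly the resolvent-expansion route you mention as an alternative at the end. Your \emph{primal} approach via the Poisson solution $\phi$ is more direct: the exact difference-quotient identity drops out in one line, and no pseudo-inverse is needed. What the paper's route buys is a quantitative $O(\varepsilon^2)$ remainder without any compactness argument; what yours buys is conceptual simplicity.

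Two small points. First, your sentence ``the difference-quotient identity itself forces the limit to be unique'' is not the right justification for the weak convergence $\bar{\pi}_\Gamma^\varepsilon\rightharpoonup\bar{\pi}_\Gamma^0$: tightness gives subsequential limits, and any such limit $\mu$ inherits $\int\bar{\mathcal{L}}_\Gamma f\,\mathrm{d}\mu=0$ for test functions $f$ by passing to the limit in $\int(\bar{\mathcal{L}}_\Gamma+\varepsilon\mathrm{d}\Gamma)f\,\mathrm{d}\bar{\pi}_\Gamma^\varepsilon=0$, whence $\mu=\bar{\pi}_\Gamma^0$ by the assumed ergodicity $\Gamma\in\mathcal{G}^0$. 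Second, boundedness of $\mathrm{d}\Gamma\phi$ does not rely on vague ``regularising properties'': since $\mathrm{d}\Gamma\in T\mathcal{G}(A)$ has coefficients $\alpha_{ijkl}\in C_c^\infty(\bar{E})$, the function $\mathrm{d}\Gamma\phi$ is continuous by condition~\ref{it:continuous composition} and supported in the (compact) union of the supports of the $\alpha_{ijkl}$, hence bounded. This is the same observation the paper uses to control its $O(\varepsilon^2)$ remainder uniformly in $\varepsilon$.
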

\begin{proof}
	The proof can be found in Appendix \ref{sec:proofs_OT}.
\end{proof}
\begin{remark}
	Notice that our notation entails that $\bar{\pi}_\Gamma \equiv \bar{\pi}^0_\Gamma$. Moreover, the right-hand side of \eqref{eq:derivative} is well defined. Indeed, by the second condition in Definition  \ref{def:interior point}, $\left[ \bar{\mathcal{L}}_{\Gamma} ^*\right]^{-1}$ is well defined on $L^2_0(\bar{\pi}_{\Gamma}^0)$ and furthermore $\Ran \mathrm{d}\Gamma^* \subseteq L^2_0(\bar{\pi}_{\Gamma}^0)$ due to 
	\begin{equation}
	\int_{\bar{E}} \mathrm{d}\Gamma^* f \, \mathrm{d}\bar{\pi}_{\Gamma}^0 = \int_{\bar{E}} (\mathrm{d} \Gamma \mathbf{1}) f \, \mathrm{d}\bar{\pi}_{\Gamma}^0= 0, \quad  f \in C_c^{\infty}(\bar{E}), 
	\end{equation}
	using that $\mathrm{d}\Gamma \mathbf{1} = 0$ according to Lemma \ref{lem:decomposition kernel}.
\end{remark}
For an interior point $\Gamma \in \mathcal{G}^0(A)$ and $\mathrm{d}\Gamma \in T \mathcal{G}(A)$ let us introduce the suggestive notation
\begin{equation}
\frac{\mathrm{d}}{\mathrm{d}\Gamma} \int_{\bar{E}} c \, \mathrm{d} \bar{\pi}_\Gamma := \frac{\mathrm{d}}{\mathrm{d}\varepsilon} \bigg\rvert_{\varepsilon = 0} \left( \int_{\bar{E}} c \, \mathrm{d}\bar{\pi}^{\varepsilon}_{\Gamma} \right),
\end{equation}
as well as the following terminology: 
\begin{definition}[Critical points]
	\label{def:critical point}
	Let $\Gamma \in \mathcal{G}(A)$ be an interior point. Then $\Gamma$ is called \emph{critical} if 
	\begin{equation}
	\frac{\mathrm{d}}{\mathrm{d}\Gamma} \int_{\bar{E}} c \, \mathrm{d} \bar{\pi}_\Gamma = 0
	\end{equation}
	for all $\mathrm{d}\Gamma \in T\mathcal{G}(A)$.
\end{definition}
In our aim to find minimisers of the function $\Gamma \mapsto \int_{\bar{E}} c \, \mathrm{d}\bar{\pi}_\Gamma$, it is natural to seek critical points. The following is our main result in this section:
\begin{theorem}
	\label{thm:max boundary}
	Let $c \in C_b(\bar{E})$. Then either all interior points are critical, or no interior point is critical.
\end{theorem}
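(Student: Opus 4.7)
The plan is to show that criticality at any single interior point forces the map $\Gamma \mapsto \bar{\pi}_\Gamma(c)$ to be constant on the whole of $\mathcal{G}^0(A)$, which then automatically propagates criticality to every other interior point.

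First, I would recast Proposition \ref{prop:derivative} in a more workable form. Let $\phi_\Gamma \in L_0^2(\bar{\pi}_\Gamma)$ denote the unique solution of the Poisson equation $-\bar{\mathcal{L}}_\Gamma \phi_\Gamma = c - \bar{\pi}_\Gamma(c)$, whose existence is guaranteed by condition \ref{it:invertibility} of Definition \ref{def:interior point}. Setting $\psi := [\bar{\mathcal{L}}_\Gamma^*]^{-1}(\mathrm{d}\Gamma^* \mathbf{1}) \in L_0^2(\bar{\pi}_\Gamma)$, using $\bar{\pi}_\Gamma(\psi) = 0$, and moving $\bar{\mathcal{L}}_\Gamma^*$ onto $\phi_\Gamma$ via the adjoint identity, the right-hand side of \eqref{eq:derivative} rearranges into
\begin{equation*}
-\int_{\bar{E}} c\, \psi\, \mathrm{d}\bar{\pi}_\Gamma = \int_{\bar{E}} (\bar{\mathcal{L}}_\Gamma \phi_\Gamma)\, \psi\, \mathrm{d}\bar{\pi}_\Gamma = \int_{\bar{E}} \phi_\Gamma\, (\mathrm{d}\Gamma^* \mathbf{1})\, \mathrm{d}\bar{\pi}_\Gamma = \int_{\bar{E}} \mathrm{d}\Gamma\, \phi_\Gamma\, \mathrm{d}\bar{\pi}_\Gamma.
\end{equation*}
Thus $\Gamma$ is critical if and only if $\int_{\bar{E}} \mathrm{d}\Gamma\, \phi_\Gamma\, \mathrm{d}\bar{\pi}_\Gamma = 0$ for every $\mathrm{d}\Gamma \in T\mathcal{G}(A)$. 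Expanding $\mathrm{d}\Gamma = \sum_{(i,j,k,l) \in \mathcal{J}} \alpha_{ijkl} A_k^i A_l^j$ and varying the $\alpha_{ijkl} \in C_c^\infty(\bar{E})$ one coefficient at a time, the criterion collapses (via the continuity $A_k^i A_l^j \phi_\Gamma \in C(\bar{E})$ furnished by condition \ref{it:continuous composition}) to
\begin{equation*}
A_k^i A_l^j \phi_\Gamma = 0 \ \text{ on } \ \bar{E}, \qquad (i,j,k,l) \in \mathcal{J}.
\end{equation*}

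Next, I would pick any other interior point $\Gamma' \in \mathcal{G}^0(A)$, expanded as $\Gamma' = \sum_{(i,j,k,l) \in \mathcal{J}} \alpha^{\Gamma'}_{ijkl} A_k^i A_l^j$. Feeding these specific coefficients into the pointwise identities of the previous step yields both $\Gamma \phi_\Gamma = 0$ and $\Gamma' \phi_\Gamma = 0$ identically on $\bar{E}$. The first of these, combined with the Poisson equation, gives $-\bar{\mathcal{L}}_0 \phi_\Gamma = c - \bar{\pi}_\Gamma(c)$, hence $-\bar{\mathcal{L}}_{\Gamma'} \phi_\Gamma = c - \bar{\pi}_\Gamma(c)$ as well; integrating against the invariant measure $\bar{\pi}_{\Gamma'}$ (which annihilates $\bar{\mathcal{L}}_{\Gamma'}$) produces $\bar{\pi}_{\Gamma'}(c) = \bar{\pi}_\Gamma(c)$. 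Consequently $\phi_{\Gamma'} := \phi_\Gamma - \bar{\pi}_{\Gamma'}(\phi_\Gamma)$ is the unique centred solution of the Poisson equation for $\Gamma'$, and since the $A_k^i$ kill constants by Lemma \ref{lem:decomposition kernel}, we still have $A_k^i A_l^j \phi_{\Gamma'} = A_k^i A_l^j \phi_\Gamma = 0$. By the criterion of the first paragraph, $\Gamma'$ is critical, and the contrapositive yields the claimed dichotomy.

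The hard part will be the lift from $\bar{\pi}_\Gamma$-almost everywhere vanishing to pointwise vanishing on $\bar{E}$, which is needed both to ensure that $\Gamma' \phi_\Gamma$ is an honest zero function (so that it survives integration against the potentially distinct measure $\bar{\pi}_{\Gamma'}$) and to guarantee that the conclusion is valid at every other interior point. Condition \ref{it:continuous composition} of Definition \ref{def:interior point} supplies the continuity needed for this lift, but an additional support hypothesis on the invariant measures $\bar{\pi}_\Gamma$ is also required; this is automatic for the elliptic couplings of Sections \ref{sec:overdamped_all} and \ref{ex:underdamped}, and I expect it can be relaxed via the Lyapunov and minorisation framework alluded to in the remarks following Definition \ref{def:interior point} to cover more singular situations such as the two-point motion of Remark \ref{rem:erg loose couplings}.
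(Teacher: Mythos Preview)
Your proof is correct and follows essentially the same approach as the paper. Both arguments reduce criticality at $\Gamma$ to the pointwise system $A_k^i A_l^j \phi_\Gamma = 0$ (where $\phi_\Gamma$ solves the Poisson equation for $\bar{\mathcal{L}}_\Gamma$), then observe that this forces $\bar{\mathcal{L}}_0 \phi_\Gamma = c - \text{const}$, a condition independent of $\Gamma$; the paper packages this last observation as a standalone equivalence (Lemma~\ref{lem:no_Gamma_equivalence}), whereas you carry out the transport from $\Gamma$ to $\Gamma'$ explicitly, but the content is the same. Your discussion of the $\bar{\pi}_\Gamma$-a.e.\ versus pointwise issue is in fact more careful than the paper's own treatment, which passes over that step silently.
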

\begin{example}
	Let $c$ be of the form
	\begin{equation}
	c(x_1,\ldots,x_n) = g_1(x_1) + \ldots + g_n(x_n),
	\end{equation}
	for appropriate functions $g_i: E_i \rightarrow \mathbb{R}$. Then, since $\bar{\pi}_\Gamma$ is a coupling of the fixed marginals $(\pi_i)_{i = 1}^n$, the function $\Gamma \mapsto \int_{\bar{E}} c \,\mathrm{d}\bar{\pi}_\Gamma = \sum_{i=1}^n \int_{E_i}g_i \, \mathrm{d}\pi_i$ is constant, and hence all interior points are critical.
\end{example}
Before proceeding to the proof of the theorem, let us give a few remarks:
\begin{remark}
	\label{rem:discussion main theorem}
	Informally, Theorem \ref{thm:max boundary} states that the mapping $\mathcal{G}^0(A) \ni \Gamma \mapsto \int_{\bar{E}} c \,\mathrm{d}\bar{\pi}_{\Gamma}$ is either locally constant or does not attain its extrema on interior points. In other words, if $\Gamma \mapsto \int_{\bar{E}} c \, \mathrm{d}\bar{\pi}_\Gamma$ is not constant, then its extrema lie `at the boundary' of $\mathcal{G}^0(A)$, although we have not rigorously defined this term, and moreover, $\int_{\bar{E}} c \,\mathrm{d}\bar{\pi}_\Gamma$ is not even well-defined for nonergodic couplings. 
\end{remark}
\begin{remark}
	A striking consequence of Theorem \ref{thm:max boundary} is that  under mild conditions, independent coupling (associated to $\mathbf{0} \in \mathcal{G}^0$) of overdamped or underdamped Langevin dynamics is not optimal for \emph{any} criterion of the form $\int_{\bar{E}}c \, \mathrm{d}\bar{\pi}_\Gamma$. Theorem \ref{thm:max boundary} complements results from the theory of optimal transportation that state that optimal couplings are generically singular in terms of their support. Indeed, considering the example of overdamped or underdamped Langevin dynamics, the `boundary of $\mathcal{G}^0(A)$' consists of couplings that lead to degenerately elliptic generators that are in general not hypoelliptic. In particular, the corresponding invariant measures are not in general absolutely continuous with respect to the Lebesgue measure.  
\end{remark}
\begin{remark}
	Theorem \ref{thm:max boundary} also supports the folklore that optimal Markov chain Monte Carlo samplers use as little noise as possible to guarantee ergodicity, as degenerately elliptic operators correspond to dynamics where noise only acts in certain directions. For example, it is by now well-documented that nonreversible samplers outperform their reversible counterparts in various settings (see for instance \cite{DLP2016,Hwang2005,ottobre2016markov,RBS2016}). The process of making a reversible sampler nonreversible can be thought of informally as decreasing the ratio between random and deterministic behaviour.
\end{remark}
\begin{remark}
	\label{rem:power expansion}
	Let us examine the function $\mathcal{G}^0(A) \ni \Gamma \mapsto \int_{\bar{E}} c \,\mathrm{d} \bar{\pi}_{\Gamma}$ along a ray. More precisely, fix $\mathrm{d}\Gamma \in T \mathcal{G}(A)$, set $\bar{\mathcal{L}}_{\varepsilon}:= \bar{\mathcal{L}}_0 + \varepsilon \mathrm{d}\Gamma$ for $\varepsilon$ small enough, and consider the function $\varepsilon \mapsto \int_{\bar{E}} c \, \mathrm{d}\bar{\pi}_{\varepsilon}$, where $(\bar{\pi}_{\varepsilon})_\varepsilon$ denotes the corresponding family of invariant measures. Since $\mathrm{d}\Gamma$ is relatively bounded with respect to $\bar{\mathcal{L}}_0$ in $L^2(\bar{\pi}_0)$, we have the following Neumann power expansion for $\varepsilon$ small enough:
	\begin{equation}
	\label{eq:power expansion ray}
	\int_{\bar{E}} c \, \mathrm{d} \bar{\pi}_{\varepsilon} = \int_{\bar{E}} c \left( 1 + \sum_{j=1}^{\infty} (-\varepsilon)^j [(\mathrm{d}\Gamma \bar{\mathcal{L}}_0^{-1})^*]^j \right) \mathbf{1}\, \mathrm{d}\bar{\pi}_0. 
	\end{equation} 
	For details, see \cite[Theorem 5.2]{LS2016}. The factor of $(-\varepsilon)^j$ in expression \eqref{eq:power expansion ray} signals oscillatory behaviour, and indeed it is straightforward to construct examples (for instance in the Gaussian case), where $\eqref{eq:power expansion ray}$ exhibits multiple local minima and maxima as a function of $\varepsilon$ (see for instance the graph related to linear coupling in Figure \ref{fig:mixed_obs} below). This finding is not in contradiction with Theorem \ref{thm:max boundary}. Indeed, as Theorem \ref{thm:max boundary} shows, at those extrema there are directions of ascent (or descent) in $T\mathcal{G}(A)$ not aligned with the considered ray and thus, those extrema turn out not to be critical when considered in the whole of $\mathcal{G}^0(A)$.
\end{remark}
Let us now prove Theorem \ref{thm:max boundary} and start with the following key lemma. Its significance derives from the fact that the second statement manifestly does not depend on $\Gamma$. 
\begin{lemma}
	\label
	{lem:no_Gamma_equivalence}
	Let $c \in C_b(\bar{E}) \cap L_0^2(\bar{\pi}_0)$ and $\Gamma \in \mathcal{G}^0(A)$ be an interior point.
	Then the following conditions are equivalent:
	\begin{enumerate}
		\item 
		\label{it:include_gamma}
		The following holds for all admissible indices $(i,j,k,l) \in \mathcal{J}$:
		\begin{equation}
		\label{eq:Gamma_pde}
		A_k^i A_l^j \bar{\mathcal{L}}_\Gamma^{-1}\left(c - \bar{\pi}_{\Gamma}(c)\right) = 0.
		\end{equation}
		\item
		\label{it:no_Gamma}
		There exists $f \in \mathcal{D}(\bar{\mathcal{L}}_0)$ such that both of the following hold:
		\begin{enumerate}
			\item
			\label{it:As_zero}
			for all admissible indices $(i,j,k,l) \in \mathcal{J}$ it holds that
			\begin{equation}
			\label{eq:As_zero}
			A_k^i A_l^j f = 0,
			\end{equation}
			\item
			\label{it:cost L0}
			\begin{equation}
			\label{eq:Poisson c}
			\bar{\mathcal{L}}_0 f = c.
			\end{equation}
		\end{enumerate} 
	\end{enumerate}
\end{lemma}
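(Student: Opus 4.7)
My strategy is to exploit the structural fact that $\Gamma = \sum_{(i,j,k,l)\in\mathcal{J}} \alpha_{ijkl} A_k^i A_l^j$ is a superposition of the operators $A_k^i A_l^j$, so that whenever all $A_k^i A_l^j f$ vanish we automatically have $\Gamma f = 0$, and hence the Poisson equations for $\bar{\mathcal{L}}_\Gamma$ and $\bar{\mathcal{L}}_0$ coincide on such an $f$. The proof of equivalence is then a short round-trip, with the real work hidden in the domain-theoretic step of moving between $L^2(\bar{\pi}_\Gamma)$ and $L^2(\bar{\pi}_0)$.

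\emph{Direction $(1)\Rightarrow(2)$.} Using that $\Gamma$ is an interior point, define
\begin{equation*}
f := \bar{\mathcal{L}}_\Gamma^{-1}\bigl(c - \bar{\pi}_\Gamma(c)\bigr),
\end{equation*}
which is well defined in $L_0^2(\bar{\pi}_\Gamma)$ by condition \ref{it:invertibility} of Definition \ref{def:interior point}. Hypothesis (1) is precisely condition (a) for this $f$. Since $\Gamma$ is a linear combination of $A_k^i A_l^j$'s, hypothesis (1) further gives $\Gamma f = 0$, whence $\bar{\mathcal{L}}_0 f = (\bar{\mathcal{L}}_\Gamma - \Gamma) f = c - \bar{\pi}_\Gamma(c)$. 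To upgrade this to (b) it suffices to show $\bar{\pi}_\Gamma(c) = 0$; I will do this by integrating the identity against $\bar{\pi}_0$, using that $\bar{\pi}_0(\bar{\mathcal{L}}_0 f) = 0$ for $f$ in the domain, combined with $\bar{\pi}_0(c) = 0$ from the hypothesis $c \in L_0^2(\bar{\pi}_0)$. The technical bridge is to verify that $f \in \mathcal{D}(\bar{\mathcal{L}}_0)$ as an operator in $L^2(\bar{\pi}_0)$; this is exactly the purpose of condition \ref{it:continuous composition} of Definition \ref{def:interior point}, which ensures that $f$ is continuous and that $\bar{\mathcal{L}}_0 f = c - \bar{\pi}_\Gamma(c)$ holds pointwise on $\bar{E}$ and hence in $L^2(\bar{\pi}_0)$.

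\emph{Direction $(2)\Rightarrow(1)$.} Given $f$ with properties (a) and (b), condition (a) yields $\Gamma f = \sum \alpha_{ijkl} A_k^i A_l^j f = 0$, so that $\bar{\mathcal{L}}_\Gamma f = \bar{\mathcal{L}}_0 f = c$. Integrating against the invariant measure $\bar{\pi}_\Gamma$ gives $\bar{\pi}_\Gamma(c) = 0$, so $\bar{\mathcal{L}}_\Gamma f = c - \bar{\pi}_\Gamma(c)$. Subtracting the $\bar{\pi}_\Gamma$-mean to centre $f$ and applying $\bar{\mathcal{L}}_\Gamma^{-1}$ gives $f - \bar{\pi}_\Gamma(f) = \bar{\mathcal{L}}_\Gamma^{-1}(c - \bar{\pi}_\Gamma(c))$. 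Since $A_k^i A_l^j$ annihilates constants by Lemma \ref{lem:decomposition kernel}, applying $A_k^i A_l^j$ to this identity and invoking (a) once more gives (1).

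\emph{Main obstacle.} The serious issue is not algebraic but functional-analytic: moving the function $f$ between the two Hilbert spaces $L^2(\bar{\pi}_\Gamma)$ and $L^2(\bar{\pi}_0)$, whose norms are not a priori comparable, and justifying that both $\bar{\mathcal{L}}_\Gamma^{-1}(c-\bar{\pi}_\Gamma(c))$ (which a priori lives only in $L_0^2(\bar{\pi}_\Gamma)$) makes sense as an element of $\mathcal{D}(\bar{\mathcal{L}}_0)$ in direction $(1)\Rightarrow(2)$, and symmetrically that $f\in\mathcal{D}(\bar{\mathcal{L}}_0)$ can be viewed in $L_0^2(\bar{\pi}_\Gamma)$ in direction $(2)\Rightarrow(1)$. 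I expect condition \ref{it:continuous composition} of Definition \ref{def:interior point}, which forces continuity of the relevant compositions, together with boundedness of $c$, to supply exactly the regularity needed to pass between the two $L^2$-settings via pointwise identities.
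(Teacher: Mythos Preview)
Your proposal is correct and follows essentially the same route as the paper's proof: in both directions the key observation is that condition (a) forces $\Gamma f = 0$, so that the Poisson equations for $\bar{\mathcal{L}}_\Gamma$ and $\bar{\mathcal{L}}_0$ collapse into one another, and integrating against the appropriate invariant measure forces $\bar{\pi}_\Gamma(c)=0$. Your treatment is in fact slightly more detailed than the paper's (which dispatches $(2)\Rightarrow(1)$ in a single sentence and does not explicitly discuss the passage between $L^2(\bar{\pi}_0)$ and $L^2(\bar{\pi}_\Gamma)$); your invocation of Lemma~\ref{lem:decomposition kernel} to handle the constant $\bar{\pi}_\Gamma(f)$ and your flagging of condition~\ref{it:continuous composition} for the domain issues are both to the point.
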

\begin{proof} First assume that \eqref{eq:Gamma_pde} holds for all $(i,j,k,l) \in \mathcal{J}$. Then setting $$f = \bar{\mathcal{L}}_{\Gamma}^{-1} \left(c - \bar{\pi}_{\Gamma}(c)\right)$$ immediately implies \eqref{eq:As_zero}. Furthermore, from \eqref{eq:As_zero} and \eqref{def:G(A)} it follows that $\bar{\mathcal{L}}_\Gamma f = \bar{\mathcal{L}}_0 f$,
	implying \begin{equation}
	\label{eq:L0 Pi_G}
	\bar{\mathcal{L}}_0 f = \left(c - \bar{\pi}_{\Gamma}(c)\right),
	\end{equation}
	as well as $f \in \mathcal{D}(\bar{\mathcal{L}}_0)$. Equation \eqref{eq:L0 Pi_G} clearly implies that $\bar{\pi}_0(c) = \bar{\pi}_\Gamma(c)$, and hence $\bar{\pi}_\Gamma(c) = 0$, leading to \eqref{eq:Poisson c}. 
	
	The reverse implication follows similarly by first observing that \eqref{eq:As_zero} and \eqref{eq:Poisson c} imply that $\bar{\mathcal{L}}_\Gamma f = c$, and hence $\bar{\pi}_\Gamma(c)=0$. Combining this with \eqref{eq:As_zero} shows that \eqref{eq:Gamma_pde} holds.
\end{proof}

\begin{proof} [Proof of Theorem \ref{thm:max boundary}.]
	Clearly, we can without loss of generality assume that $\bar{\pi}_0 (c) = 0$.
	According to Definition \ref{def:critical point} and Proposition \ref{prop:derivative}, an interior point $\Gamma \in \mathcal{G}^0(A)$ is critical if and only if 
	\begin{equation}
	\label{eq:der_zero}
	\int_{\bar{E}} c \left[ \bar{\mathcal{L}}_{\Gamma} ^*\right]^{-1} (\mathrm{d}\Gamma^* 1) \,\mathrm{d}\bar{\pi}_{\Gamma} = 0,
	\end{equation}
	for all $\mathrm{d}\Gamma \in T\mathcal{G}(A)$, which is equivalent to 
	\begin{equation}
	\label{eq:der_paramaters}
	\sum_{(i,j,k,l) \in \mathcal{J}} \int_{\bar{E}} \left( A_k^i A_l^j \bar{\mathcal{L}}_\Gamma^{-1}\left(c - \bar{\pi}_\Gamma(c) \right)  \right) \cdot \alpha_{ijkl} \, \mathrm{d}\bar{\pi}_\Gamma = 0,
	\end{equation}
	for all $(\alpha_{ijkl})_{(i,j,k,l) \in \mathcal{J}} \subset C_c^\infty(\bar{E})$. The latter statement is clearly equivalent to the first statement in Lemma \ref{lem:no_Gamma_equivalence}. 
	The result now follows by noting that the second statement in Lemma \ref{lem:no_Gamma_equivalence} does not depend on $\Gamma$.
\end{proof}\section{A perturbative approach for the study of the asymptotic variance}
\label{ch:perturbative approach}

Informally speaking, Theorem \ref{thm:max boundary} shows that the objective of optimising the asymptotic variance $\sigma_F^2$ leads to the requirement that $\Gamma \in \mathcal{G}^0(A)$ should be chosen to be a `boundary point' (see the discussion in Remark \ref{rem:discussion main theorem}). While being an interesting theoretical result, it does not give much guidance about how to choose a suitable coupling in practice (after all, both minima and maxima are obtained `at the boundary' of $\mathcal{G}^0(A)$). In this section we therefore develop a perturbative approach, based on operators of the form
\begin{equation}
\bar{\mathcal{L}}^{\varepsilon}_{\Gamma} = \bar{\mathcal{L}}_0 + \varepsilon \mathrm{d}\Gamma, \quad \mathrm{d}\Gamma \in T \mathcal{G}(A),
\end{equation}
for $\varepsilon$ small enough.
In the following, we will assume that $\varepsilon \in \mathcal{I}$, where $\mathcal{I} \subset \mathbb{R}$ is an appropriate interval such that $\bar{\mathcal{L}}^{\varepsilon}_{\Gamma} \in \mathcal{G}^0$ for all $\varepsilon \in \mathcal{I}$. As usual, we consider observables of the form $F = \frac{1}{n}\sum_{i=1}^n f_i$, for some $f_i \in L_0^2(\pi_i)$, and suppose that Assumption \ref{ass:one-particle decay} is satisfied. To stress the dependence of the asymptotic variance on the parameter $\varepsilon$ we will write $\sigma_F^2(\varepsilon)$. Note that a similar setting has already been considered in Remark \ref{rem:power expansion}. There, we investigated the dependence of the asymptotic variance (or more generally, of the quantity $\int_{\bar{E}} c \, \mathrm{d}\bar{\pi}_{\varepsilon}$) on the parameter $\varepsilon$. Here, we are rather interested in the choice of the `direction' $\mathrm{d}\Gamma \in T \mathcal{G}(A)$, starting from the trivial (independent) coupling $\bar{\mathcal{L}}_0$. 

Combining the expression \eqref{eq:asym_var} with either \eqref{eq:power expansion ray} or \eqref{eq:derivative} we see that
\begin{equation}
\label{eq:asym var local}
\frac{\mathrm{d}}{\mathrm{d}\varepsilon} \sigma_F^2 \big\vert_{\varepsilon = 0} = \int_{\bar{E}} \mathrm{d}\Gamma \xi \, \mathrm{d}\bar{\pi}_0,
\end{equation}
where $\xi$ is given by
\begin{equation}
\xi = \frac{1}{n^2}\sum_{i < j} \phi_i \phi_j,
\end{equation}
in terms of the solutions to the Poisson equations \eqref{eq:invertibility}.
The benefit of \eqref{eq:asym var local} is that its right-hand side consists of expressions that are known in principle, as the measure $\bar{\pi}_0$ is given by the product $\bar{\pi}_0 = \bigotimes_{i=1}^n \pi_i$. It therefore serves as a starting point for finding a suitable coupling operator $\Gamma \in \mathcal{G}$. Let us summarise our approach in this section in the following form:
\begin{problem}
	\label{prob:linearised OT}
	Given invariant measures $\pi_i \in \mathcal{P}(E_i)$ and observables $f_i \in L_0^2(\pi_i)$, find a coupling operator $\Gamma \in \mathcal{G}$ such that
	\begin{equation}
	\label{eq:linearised objective}
	\int_{\bar{E}} \Gamma \xi \, \mathrm{d}\bar{\pi}_0
	\end{equation}
	is minimised.
\end{problem}
Problem \ref{prob:linearised OT} can be thought of in two different ways: Firstly, it can be interpreted as a linearisation of Problem \ref{prob:OT C0}. Indeed, \eqref{eq:linearised objective} depends linearly on $\Gamma$, whereas \eqref{eq:OT objective} is highly nonlinear (for an illustration of this fact, see the power expansion \eqref{eq:power expansion ray}). Another way of seeing this is by noting the similarity between \eqref{eq:linearised objective} and the second term appearing on the right-hand side of \eqref{eq:asym var Gamma}. Not surprisingly, Problem \ref{prob:linearised OT} turns out to be much  easier to (approximately) solve in practice. Note that by linearity, properties similar to the one expressed in Theorem \ref{thm:max boundary} hold for Problem \ref{prob:linearised OT} (at least if $\mathcal{G}$ is convex). Choosing a coupling $\Gamma \in \mathcal{G}$ according to the formulation of Problem \ref{prob:linearised OT} is clearly heuristic. However, we have had good results with it in numerical experiments (see below).  

Secondly, when a solution of Problem \ref{prob:linearised OT} is available, it is reasonable in practice to only implement a small perturbation of the independent sampler (i.e. choose $\varepsilon$ to be small). Such a choice will not be optimal over all couplings in $\mathcal{G}$ according to Theorem \ref{thm:max boundary}. However, it is then guaranteed that the performance of the sampler is at least slightly improved. Let us also note that the formulation of Problem \ref{prob:linearised OT} does not require the coupling to be ergodic, as opposed to the formulation of Problem \ref{prob:OT C0}.  

The aim of this section is to analyse Problem \ref{prob:linearised OT} for some of the examples presented in Chapter \ref{ch:coupling_examples} and to present some numerical experiments.
To this end, let us introduce the shorthand notation
\begin{equation}
\label{eq:local asym var}
\delta\sigma_F^2 (\Gamma) := \int_{\bar{E}}\Gamma \xi \,\mathrm{d}\bar{\pi}_0,
\end{equation} 
stressing the infinitesimal (approximate) nature of the objective in Problem \ref{prob:linearised OT}.
In the sequel, $\Gamma$ will be given in terms of a function $\alpha$, belonging to a set $\mathcal{A}$. To emphasize this dependence we will write $\Gamma_{\alpha}$. We will not impose regularity constraints on the function $\alpha$ (beyond measurability), so that the operators $\Gamma_\alpha$ will in general not induce couplings that satisfy the Feller property (see Remark \ref{rem:regularity alpha}) and hence strictly speaking do not belong to $\mathcal{G}$.

\subsection{Overdamped Langevin dynamics in one dimension with two particles}
\label{sec:numerics 1d overdamped}
Consider the setting from the example presented in Section \ref{sec:overdamped 1d}. Then, \eqref{eq:local asym var} takes the form
\begin{equation}
\label{eq:1d overdamped asym var}
\delta\sigma_F^2 (\Gamma_{\alpha}) =\int_{\mathbb{R}^2} \alpha(x,y) \phi'(x)\phi'(y) e^{-\left(V(x)+V(y)\right)}\,\mathrm{d}x\mathrm{d}y,
\end{equation}
where $\phi$ is the solution to the Poisson equation
\begin{equation}
\label{eq:1d Poisson}
-\left(-V'\phi' + \phi'' \right) = f, \quad \pi(\phi) = 0, 
\end{equation}
and $f \in L_0^2(\pi)$ is an observable of interest. Furthermore, $\Gamma$ is given as in \eqref{eq:1d overdamped Gamma}, with 
\begin{equation}
\alpha \in  \mathcal{A} := \{\alpha: \mathbb{R}^2 \rightarrow \mathbb{R} \, \vert \, \alpha \, \text{measurable, } -1 \le \alpha \le 1\}.
\end{equation}
Recall from Section \ref{sec:overdamped 1d} that $\alpha \in \mathcal{A}$ induces a well-defined coupled process (Lemma \ref{lem:1d BMs}) that however does not satisfy the Feller property in general (further regularity assumptions would be required).
The following optimality result is immediate from an inspection of \eqref{eq:1d overdamped asym var}:
\begin{proposition}
	\label{prop:1d overdamped opt asym var}
	Let $\alpha^* \in \mathcal{A}$ be given by
	\begin{equation}
	\label{eq:1d overdamped optimal alpha}
	\alpha^*(x,y) = 
	\begin{cases}
	1 \quad & \mbox{if } \;\phi'(x) \phi'(y) \le 0, \\
	-1 \quad & \mbox{if } \;\phi'(x) \phi'(y) > 0, \\
	\end{cases}
	\end{equation}
	Then $\Gamma_{\alpha^*}$ solves Problem \ref{prob:linearised OT} in the sense that $\delta\sigma_F^2 (\Gamma_{\alpha^*}) \le \delta\sigma_F^2 (\Gamma_{\alpha})$ for all $\alpha \in \mathcal{A}$.
\end{proposition}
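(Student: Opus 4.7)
The plan is to exploit the fact that the objective functional \eqref{eq:1d overdamped asym var} is pointwise linear in $\alpha$ and that the admissibility constraint defining $\mathcal{A}$ is itself pointwise. Concretely, set
\[
\psi(x,y) := \phi'(x)\phi'(y)\, e^{-(V(x)+V(y))},
\]
so that $\delta\sigma_F^2(\Gamma_\alpha) = \int_{\mathbb{R}^2} \alpha(x,y)\,\psi(x,y)\,\mathrm{d}x\mathrm{d}y$. Since the bound $-1 \le \alpha(x,y) \le 1$ is imposed separately at each $(x,y)$ and the integrand depends on $\alpha$ only through its value at that point, the optimisation problem decouples and reduces to minimising $t \mapsto t\,\psi(x,y)$ over $t \in [-1,1]$ for each fixed $(x,y)$.

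The elementary one-variable minimisation yields $t = -\operatorname{sgn}(\psi(x,y))$ whenever $\psi(x,y) \neq 0$ and is arbitrary otherwise. Since $e^{-(V(x)+V(y))} > 0$, the sign of $\psi(x,y)$ agrees with the sign of $\phi'(x)\phi'(y)$, and so the pointwise minimiser may be taken to equal $\alpha^*(x,y)$ as defined in \eqref{eq:1d overdamped optimal alpha} (the choice on the null set $\{\phi'(x)\phi'(y)=0\}$ is immaterial for the value of the integral). Consequently, for any $\alpha \in \mathcal{A}$ and every $(x,y) \in \mathbb{R}^2$ one has $\alpha^*(x,y)\psi(x,y) \le \alpha(x,y)\psi(x,y)$, and integrating this inequality gives $\delta\sigma_F^2(\Gamma_{\alpha^*}) \le \delta\sigma_F^2(\Gamma_\alpha)$.

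The only remaining detail is to verify that $\alpha^* \in \mathcal{A}$, i.e.\ that $\alpha^*$ is measurable. This is immediate since $\phi'$ is continuous (it is a classical solution of the Poisson equation \eqref{eq:1d Poisson} with $V \in C^\infty(\mathbb{R})$), hence the set $\{(x,y) : \phi'(x)\phi'(y) > 0\}$ is open in $\mathbb{R}^2$, so $\alpha^*$ is Borel measurable and clearly takes values in $[-1,1]$. I do not anticipate any genuine obstacle in this argument; the only subtlety worth double-checking is that the integral defining $\delta\sigma_F^2(\Gamma_\alpha)$ makes sense for all $\alpha \in \mathcal{A}$, which follows from $|\alpha \psi| \le |\psi|$ together with $\phi' \in L^2(\pi)$ (a consequence of Assumption~\ref{ass:one-particle decay}, since $\|\phi'\|_{L^2(\pi)}^2 = \langle f,\phi\rangle_{L^2(\pi)}$ by an integration by parts against $-\mathcal{L}\phi = f$), which by Cauchy--Schwarz gives $\psi \in L^1(\mathrm{d}x\mathrm{d}y)$.
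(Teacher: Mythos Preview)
Your argument is correct and is precisely the pointwise minimisation that the paper has in mind when it says the result is ``immediate from an inspection of \eqref{eq:1d overdamped asym var}''; the paper gives no further detail. Your additional checks on measurability of $\alpha^*$ and integrability of $\psi$ are sound and go slightly beyond what the paper makes explicit.
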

\begin{remark}
	\label{rem:optimal coupling improves}
	Clearly, we have
	\begin{equation}
	\int_{\bar{E}} \Gamma_{\alpha^*} \xi \, \mathrm{d}\bar{\pi}_\Gamma \le 0,
	\end{equation}
	for $\alpha^*$ as defined in \eqref{eq:1d overdamped optimal alpha}. We stress the difference between this expression and \eqref{eq:1d overdamped asym var}, where we compute the same integral, but with respect to $\bar{\pi}_0$. By comparison with \eqref{eq:asym var Gamma}, we see that $\sigma_F^2(\Gamma_{\alpha^*}) \le \sigma_F^2(\mathbf{0})$, i.e. $\Gamma_{\alpha^*}$ always improves on independent coupling. However, we do not know whether $\Gamma_{\alpha^*}$ is optimal in the sense of Problem \ref{prob:OT C0}. 
\end{remark}
\begin{figure}
	\thisfloatpagestyle{empty}
	\captionsetup[subfigure]{justification=centering}
	\caption{Comparison of invariant measures associated to solutions of Problem \ref{prob:linearised OT} and optimal transport maps as solutions to the Kantorovich problem for the example of overdamped Langevin dynamics in dimension one with quadratic potential.}
	\label{fig:overdamped coupling comparison}
	\begin{subfigure}[b]{0.5 \textwidth}
		\includegraphics[width=\textwidth]{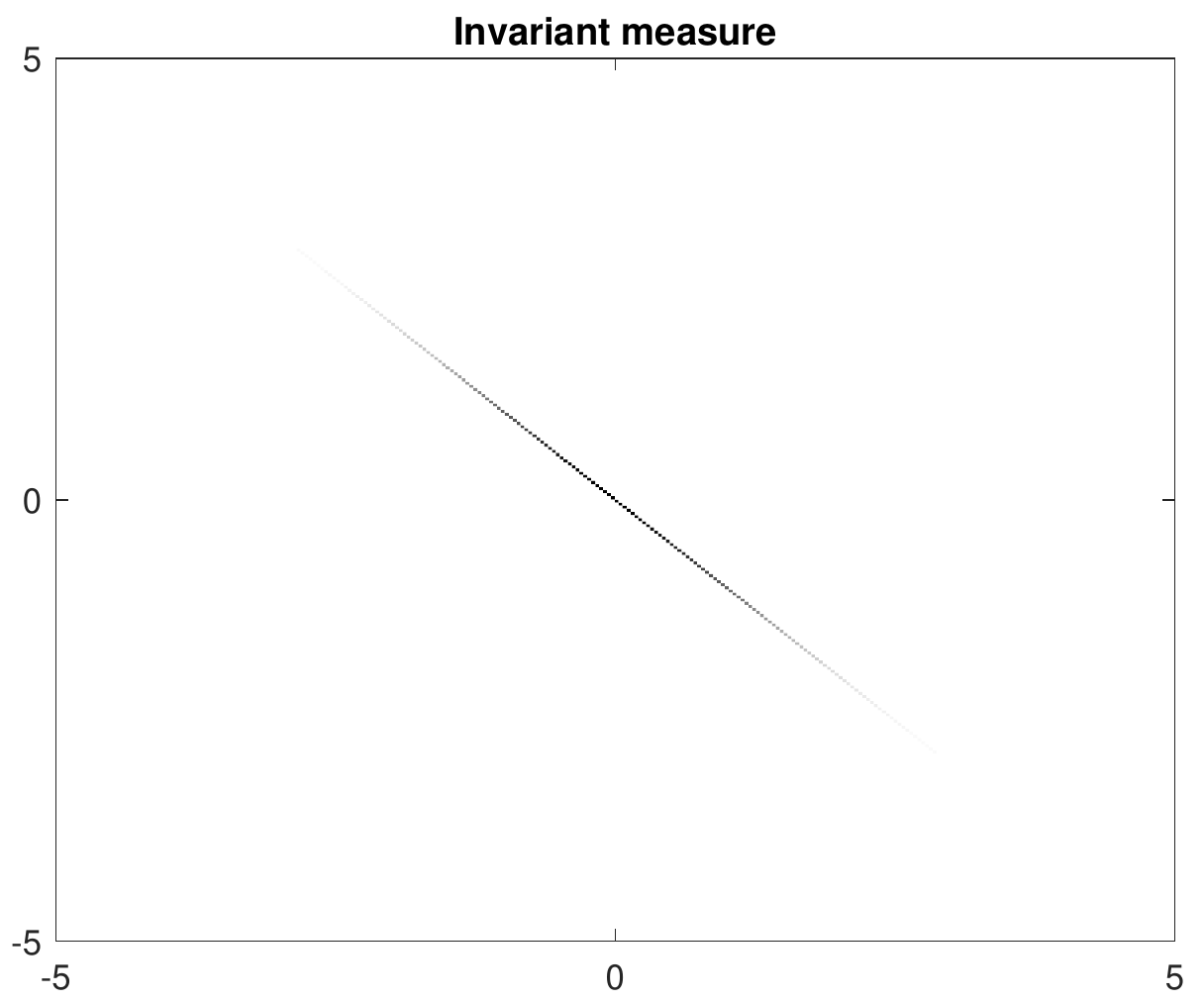}
	\end{subfigure}
	\begin{subfigure}[b]{0.5 \textwidth}
		\includegraphics[width=\textwidth]{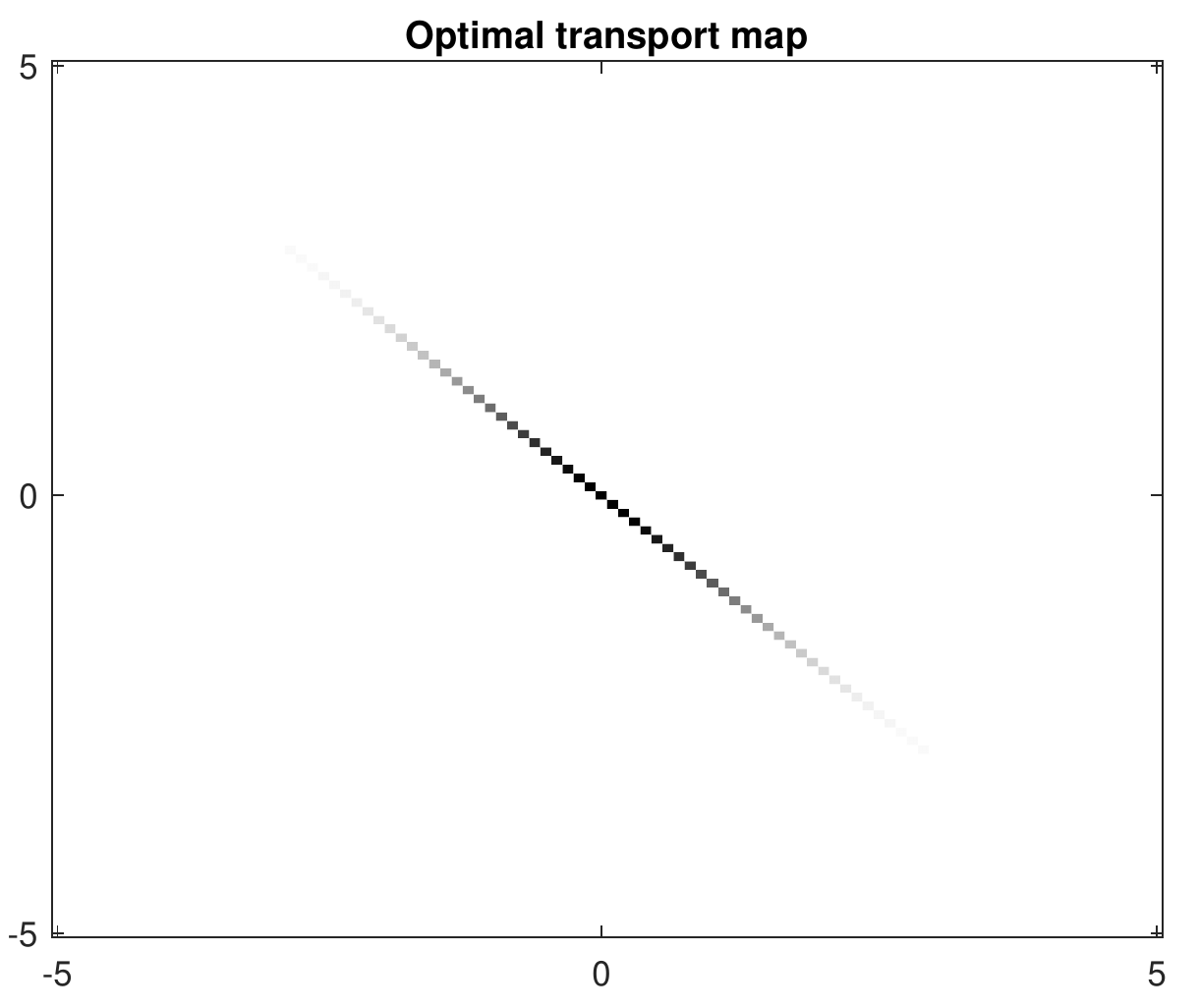}
	\end{subfigure}
	\subcaption{Linear observable: $f_1(x) = x$.}
	\label{fig:lin OT map}
	\begin{subfigure}[b]{0.5 \textwidth}
		\includegraphics[width=\textwidth]{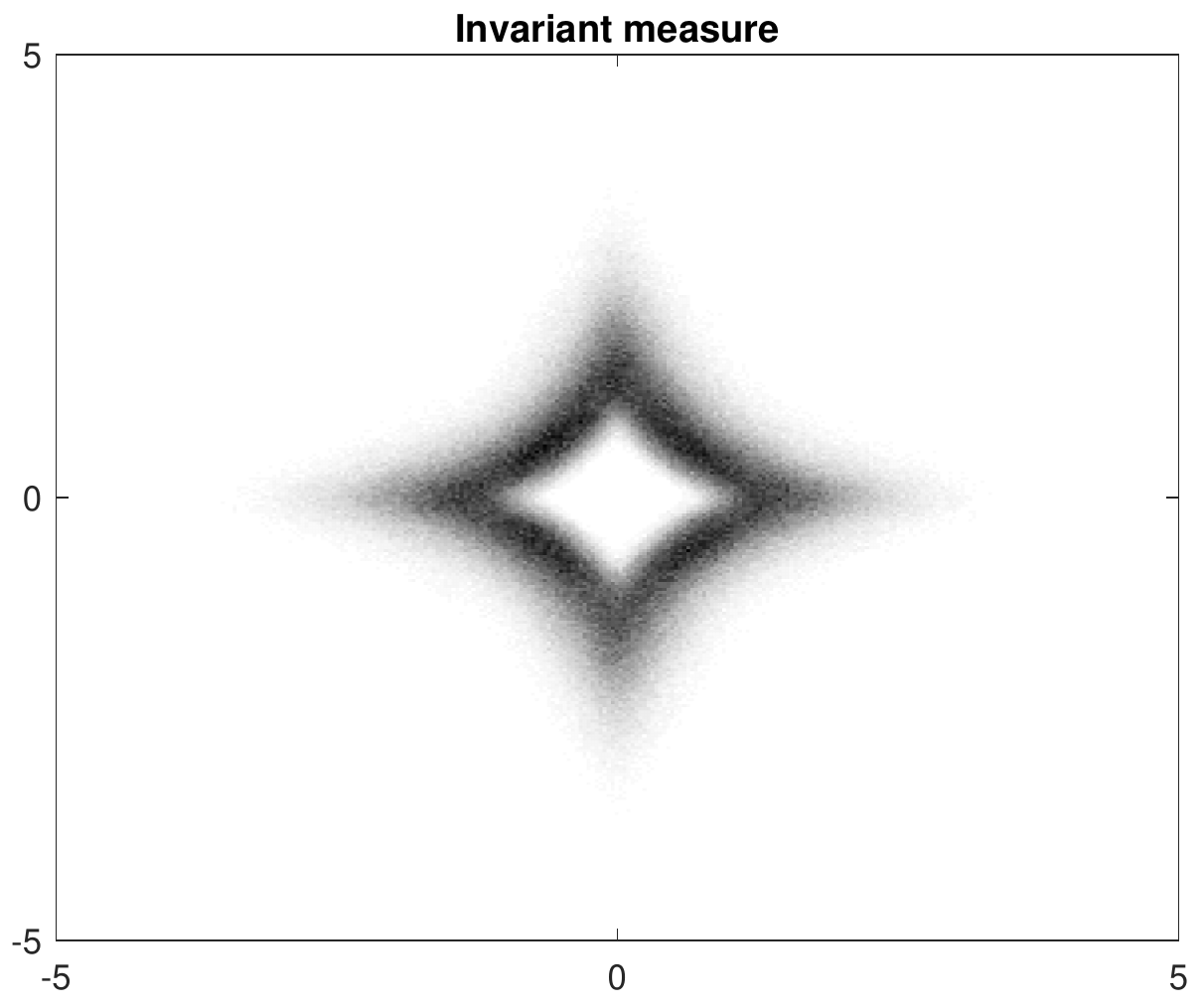}
	\end{subfigure}
	\begin{subfigure}[b]{0.5 \textwidth}
		\includegraphics[width=\textwidth]{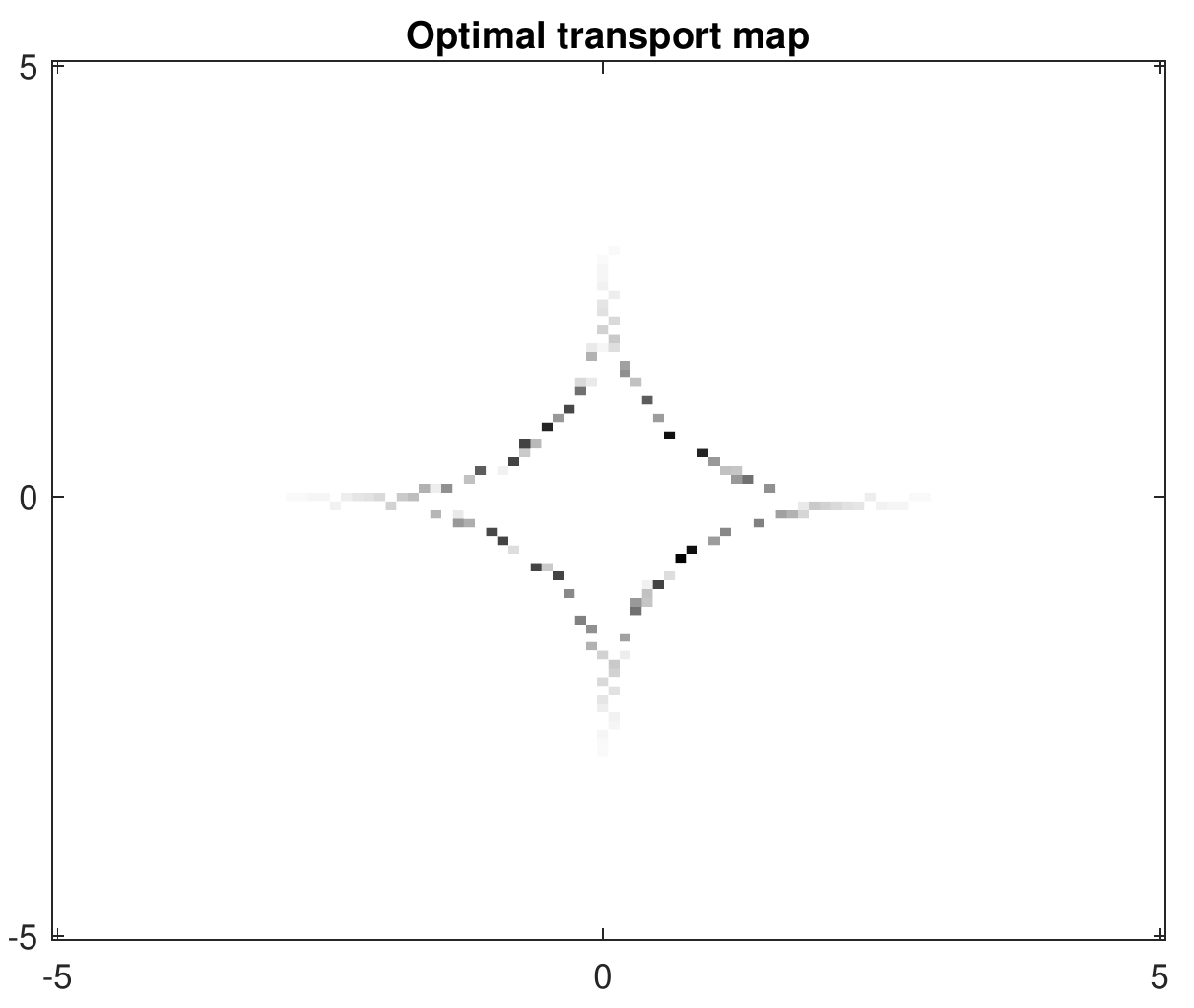}
	\end{subfigure}
	\subcaption{Quadratic observable: $f_2(x)= x^2$.}
	\begin{subfigure}[b]{0.5 \textwidth}
		\includegraphics[width=\textwidth]{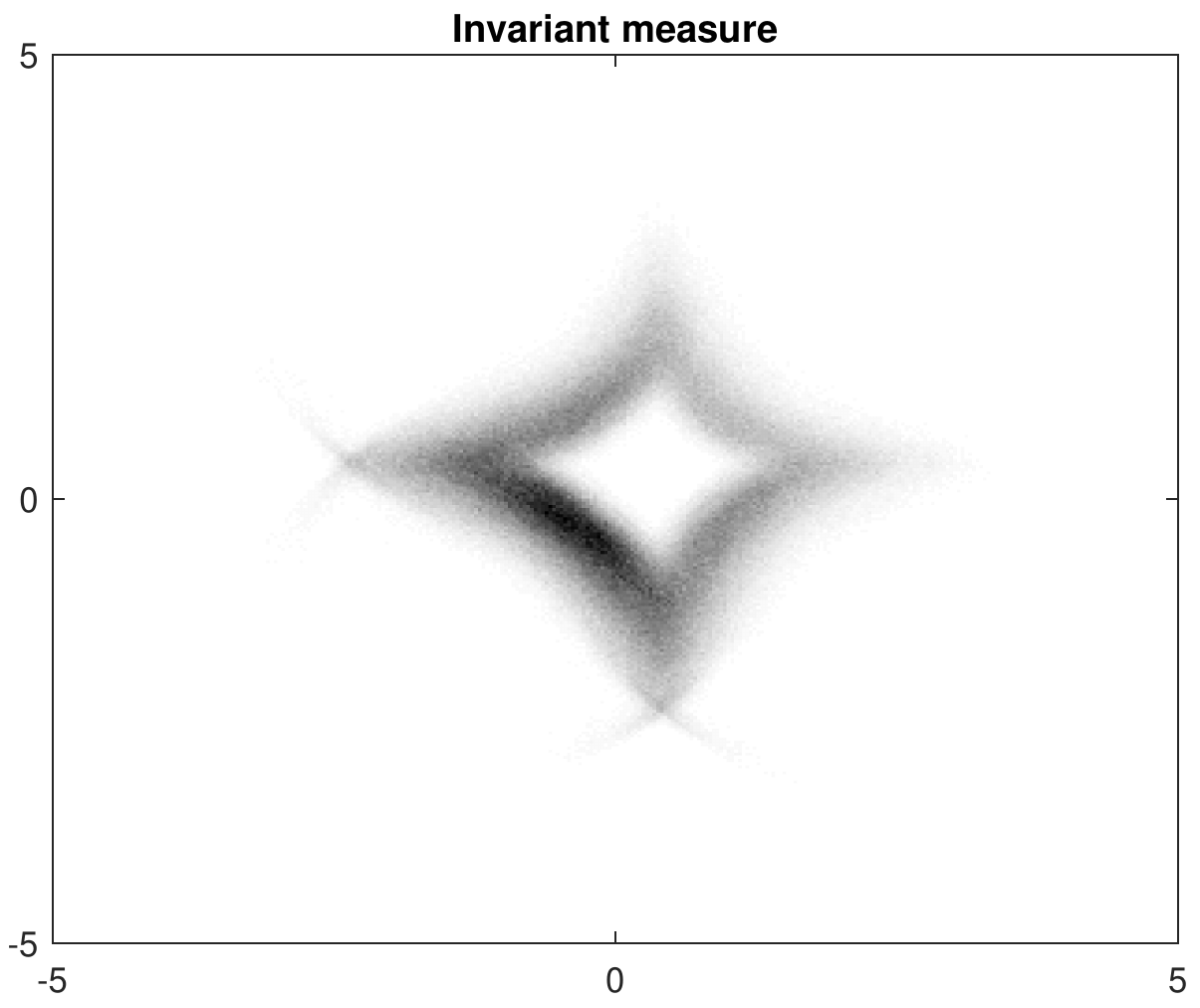}
	\end{subfigure}
	\begin{subfigure}[b]{0.5 \textwidth}
		\includegraphics[width=\textwidth]{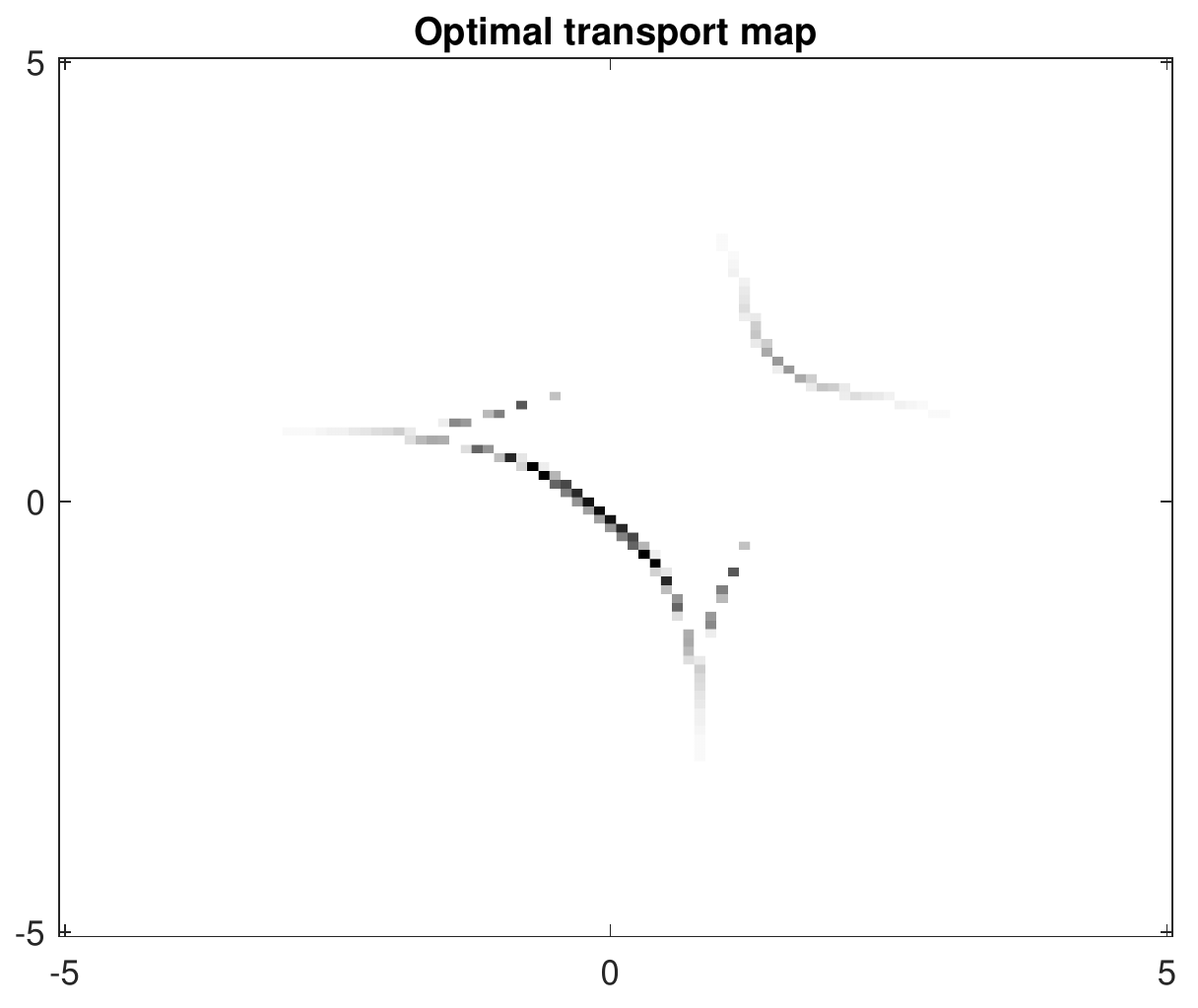}
	\end{subfigure}
	\subcaption{Mixed observable: $f_3(x) = x^2 -x$.}
\end{figure}
It is instructive to compare the solution of Problem \ref{prob:linearised OT} found in Proposition \ref{prop:1d overdamped opt asym var} to the solution of the usual Kantorovich problem. Recall that Problem \ref{prob:linearised OT} can be considered to be a linearisation of Problem \ref{prob:OT C0}, which in turn is related to the Kantorovich problem in the sense that the minimisation is carried out over a smaller set of couplings (namely those couplings that are invariant measures of coupled processes, see Definition \ref{def:admissible couplings}). For our experiments, we choose the quadratic potential $V(x) = \frac{1}{2} x^2$, i.e. the task of sampling from a Gaussian measure. Furthermore, we consider the linear observable $f_1(x) = x$, the quadratic observable $f_2(x) = x^2$ and the `mixed' observable $f_3(x) = x^2 - x$. In Figure \ref{fig:overdamped coupling comparison}, we plot the invariant measure of the coupled processes induced by \eqref{eq:1d overdamped optimal alpha} (left-hand side) and compare them to the solutions of the Kantorovich problem\footnote{The optimal transport map was computed using the Python library POT 0.4.0 (accessible from \href{https://pypi.python.org/pypi/POT/0.4.0}{https://pypi.python.org/pypi/POT/0.4.0}) which is based on the algorithm proposed in \cite{bonneel2011displacement}. } (right-hand side), with the appropriate cost function $c= -\bar{\mathcal{L}}_0 \xi$ as given in Section \ref{sec:OT}. As it turns out, the solutions to Problem \ref{prob:linearised OT} and the standard Kantorovich problem look remarkably similar (at least in shape). We hence conclude that in the example considered here, Problem \ref{prob:linearised OT} is a good approximation of Problem \ref{prob:OT C0}, keeping in mind that the solution of the Kantorovich problem provides a lower bound for the objective function of Problem \ref{prob:OT C0} (see Remark \ref{rem:OT lower bound}). 

The following lemma serves to examine a few test cases and gain further intuition. For convenience, let us assume that $f$ (and therefore, by elliptic regularity $\phi$) are smooth.
\begin{lemma}
	\label{lem:1d Poisson}
	Let $\phi \in L_0^2(\pi)$ solve the Poisson equation \eqref{eq:1d Poisson}.
	\begin{enumerate}
		\item 
		\label{it:monotone}
		Assume that $f \in L_0^2(\pi)$ is monotonically increasing (decreasing). Then $\phi'$ is nonnegative (nonpositive).
		\item
		\label{it:symmetry}
		Assume that $V$ and $f$ are symmetric, i.e. $V(-x)=V(x)$ and $f(-x)=f(x)$ for all $x \in \mathbb{R}$. Furthermore, let $f$ be monotonically decreasing (increasing) on $(-\infty, 0]$. Then $\phi'(x) \cdot x \le 0$ ($\phi'(x)\cdot x \ge 0$) for all $x \in \mathbb{R}$.
	\end{enumerate}
\end{lemma}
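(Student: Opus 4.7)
The plan is to derive a closed-form representation for $\phi'$ using an integrating factor, and then read off the sign directly from the formula under the monotonicity and symmetry hypotheses.

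First, rewrite the Poisson equation \eqref{eq:1d Poisson} as $\phi'' - V'\phi' = -f$. Multiplication by the integrating factor $e^{-V}$ turns this into the exact equation
\[
\bigl(e^{-V(x)}\phi'(x)\bigr)' = -f(x) e^{-V(x)}.
\]
Integrating from $-\infty$ to $x$ and using that $e^{-V}\phi'$ vanishes at $-\infty$ (which needs to be justified from the $L^2(\pi)$ regularity of $\phi$ combined with elliptic regularity and the constraint $\pi(f)=0$) yields the two equivalent representations
\[
\phi'(x) = -e^{V(x)} \int_{-\infty}^x f(y) e^{-V(y)}\,\mathrm{d}y = e^{V(x)} \int_x^{\infty} f(y) e^{-V(y)}\,\mathrm{d}y,
\]
the second equality being a consequence of $\pi(f)=0$. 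Both parts of the lemma reduce to sign analysis of the integrals in this representation.

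For part \ref{it:monotone}, monotonicity of $f$ together with $\pi(f)=0$ forces $f$ to change sign exactly once, at some $x_0\in\mathbb{R}$. On $(-\infty,x_0]$ the integrand $fe^{-V}$ has one constant sign, so the first representation gives the sign of $\phi'(x)$; on $[x_0,\infty)$ the integrand has the opposite constant sign, and the second representation gives the same sign of $\phi'(x)$. Thus $\phi'$ has uniform sign on $\mathbb{R}$, and the decreasing case follows by linearity ($f \leadsto -f$ sends $\phi \leadsto -\phi$).

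For part \ref{it:symmetry}, first show that $\phi$ inherits the symmetry of the data: the map $x \mapsto \phi(-x)$ satisfies the same Poisson equation (using $V'(-x) = -V'(x)$) with the same centering condition $\pi(\,\cdot\,)=0$, so by uniqueness it equals $\phi$; consequently $\phi'$ is odd and it suffices to determine the sign of $\phi'$ on $[0,\infty)$. Symmetry of $fe^{-V}$ gives $\int_0^\infty f e^{-V} = \tfrac{1}{2}\int_{-\infty}^\infty f e^{-V} = 0$, and the assumed monotonicity of $f$ on $(-\infty,0]$ together with its symmetry forces a unique sign change of $f$ on $[0,\infty)$ at some $x_*\ge 0$. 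Applying the case split from part \ref{it:monotone} on the half-line $[0,\infty)$ — now using the identity $\int_x^\infty f e^{-V} = -\int_0^x fe^{-V}$ on $[0,x_*]$ in place of the vanishing at $-\infty$ — shows that $\phi'$ has one constant sign on $[0,\infty)$, which together with its oddness yields the claimed sign of $\phi'(x)\cdot x$ on all of $\mathbb{R}$.

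The main technical obstacle is justifying the boundary term in the integrating-factor step, i.e. $e^{-V(x)}\phi'(x) \to 0$ as $x\to -\infty$. This is a standard consequence of $\phi\in L^2(\pi)$ combined with elliptic regularity for the Poisson equation, but it must be checked carefully against the growth assumptions on $V$ imposed for the Feller property; everything downstream is then a clean sign-tracking exercise.
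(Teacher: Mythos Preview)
Your approach is essentially the same as the paper's: both derive the representation $\phi'(x)=-e^{V(x)}\int_{-\infty}^x f(s)e^{-V(s)}\,\mathrm{d}s$ via an integrating factor (equivalently, variation of constants) and then read off the sign using the single sign change of $f$ forced by $\pi(f)=0$. The only notable difference is how the boundary term is handled: the paper keeps the integration constant $C$ and eliminates it a posteriori via an L'H\^opital argument showing $C\neq 0$ would violate $\phi\in L^1(\pi)$, whereas you assume $e^{-V}\phi'\to 0$ at $-\infty$ up front and flag it as the technical point to justify; your symmetry argument for part~\ref{it:symmetry} via oddness of $\phi'$ is a minor cosmetic variant of the paper's direct computation that $\Phi(0)=0$.
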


The proof can be found in Appendix \ref{sec:proofs_pert}. The following two corollaries are direct consequences of Lemma \ref{lem:1d Poisson} and Proposition \ref{prop:1d overdamped opt asym var}:
\begin{corollary}[`Mirror coupling']
	\label{cor:monotone}
	In the setting from the first part of Lemma \ref{lem:1d Poisson}, $$\alpha^* \equiv -1$$ solves Problem \ref{prob:linearised OT}, in the sense that $\delta\sigma_F^2 (\Gamma_{\alpha^*}) \le \delta\sigma_F^2 (\Gamma_{\alpha})$ for all $\alpha \in \mathcal{A}$.
\end{corollary}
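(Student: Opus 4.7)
The plan is to combine the sign information for $\phi'$ provided by Lemma \ref{lem:1d Poisson}, part \ref{it:monotone}, with the pointwise characterisation of the optimiser given by Proposition \ref{prop:1d overdamped opt asym var}. Since the objective \eqref{eq:1d overdamped asym var} is linear in $\alpha$ and the weight $\phi'(x)\phi'(y) e^{-(V(x)+V(y))}$ has a definite sign, the problem reduces to a pointwise sign inspection.

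First I would apply Lemma \ref{lem:1d Poisson}, part \ref{it:monotone}: in the monotonically increasing case $\phi' \geq 0$ on $\mathbb{R}$, while in the decreasing case $\phi' \leq 0$. In either situation the product $\phi'(x)\phi'(y)$ is nonnegative for every $(x,y) \in \mathbb{R}^2$. Consequently the integrand in \eqref{eq:1d overdamped asym var} satisfies $\phi'(x)\phi'(y)\, e^{-(V(x)+V(y))} \geq 0$ pointwise.

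Next I would invoke Proposition \ref{prop:1d overdamped opt asym var}: the minimiser $\alpha^*$ is characterised by $\alpha^*(x,y) = 1$ on $\{\phi'(x)\phi'(y) \leq 0\}$ and $\alpha^*(x,y) = -1$ on $\{\phi'(x)\phi'(y) > 0\}$. By the previous step the second set covers (up to a $\bar{\pi}_0$-null set where $\phi' = 0$) all of $\mathbb{R}^2$, so the constant function $\alpha^* \equiv -1$ agrees with an admissible minimiser up to values on the zero set of the integrand, which do not affect \eqref{eq:1d overdamped asym var}. Therefore $\delta\sigma_F^2(\Gamma_{\alpha^*}) \leq \delta\sigma_F^2(\Gamma_\alpha)$ for every $\alpha \in \mathcal{A}$, which is the desired conclusion.

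There is no genuine obstacle here: the whole argument is a direct specialisation of the two cited results, and the only point worth mentioning is the harmless ambiguity on $\{\phi'(x)\phi'(y) = 0\}$, which is why the constant choice $\alpha^* \equiv -1$ still attains the minimum even though Proposition \ref{prop:1d overdamped opt asym var} prescribes $\alpha^* = +1$ there.
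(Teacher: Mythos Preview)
Your argument is correct and is exactly the approach the paper takes: it states that the corollary is a direct consequence of Lemma~\ref{lem:1d Poisson} and Proposition~\ref{prop:1d overdamped opt asym var}, which is precisely the combination you spell out. Your remark about the harmless ambiguity on the set $\{\phi'(x)\phi'(y)=0\}$ is a helpful clarification that the paper leaves implicit.
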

\begin{corollary}[`Symmetric coupling']
	\label{cor:symmetry}
	In the setting from the second part of Lemma \ref{lem:1d Poisson},
	\begin{equation}
	\label{eq:1d overdamped optimal alpha symmetric}
	\alpha^*(x,y) = 
	\begin{cases}
	1 \quad & \mbox{if } x\cdot y \le 0, \\
	-1 \quad & \mbox{if } x \cdot y > 0, \\
	\end{cases}
	\end{equation}
	solves Problem \ref{prob:linearised OT}, in the sense that $\delta\sigma_F^2 (\Gamma_{\alpha^*}) \le \delta\sigma_F^2 (\Gamma_{\alpha})$ for all $\alpha \in \mathcal{A}$.
\end{corollary}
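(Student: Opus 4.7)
The plan is to obtain Corollary \ref{cor:symmetry} as a direct consequence of the pointwise-optimal formula in Proposition \ref{prop:1d overdamped opt asym var} combined with the sign information supplied by part \ref{it:symmetry} of Lemma \ref{lem:1d Poisson}. In other words, I would reduce the problem of minimising \eqref{eq:1d overdamped asym var} over $\mathcal{A}$ to an algebraic comparison of two sign functions.

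First, I would recall from Proposition \ref{prop:1d overdamped opt asym var} that the pointwise-optimal coupling function can be written as
\[
\alpha_{\mathrm{opt}}(x,y) \;=\; \begin{cases} +1, & \phi'(x)\phi'(y) \le 0, \\ -1, & \phi'(x)\phi'(y) > 0, \end{cases}
\]
and that the integrand of \eqref{eq:1d overdamped asym var} vanishes on $\{\phi'(x)\phi'(y) = 0\}$, so the value assigned to $\alpha$ there is immaterial.

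Next, I would invoke Lemma \ref{lem:1d Poisson}(\ref{it:symmetry}) which, under the symmetry and monotonicity hypotheses on $V$ and $f$, yields $\phi'(x)\cdot x \le 0$ for every $x\in\mathbb{R}$. Hence at every point with $\phi'(x)\neq 0$ we have $\sgn(\phi'(x)) = -\sgn(x)$. Multiplying the relations for the two variables gives $\sgn(\phi'(x)\phi'(y)) = \sgn(xy)$ on the set where $\phi'(x)\phi'(y)\neq 0$, so that on that set
\[
\alpha_{\mathrm{opt}}(x,y) \;=\; \begin{cases} +1, & xy \le 0, \\ -1, & xy > 0, \end{cases}
\]
which coincides with the $\alpha^*$ defined in the corollary.

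Finally, I would observe that any disagreement between $\alpha^*$ and $\alpha_{\mathrm{opt}}$ is confined to $\{\phi'(x)\phi'(y) = 0\}$, on which both choices contribute zero to $\delta\sigma_F^2$. The desired inequality $\delta\sigma_F^2(\Gamma_{\alpha^*}) \le \delta\sigma_F^2(\Gamma_\alpha)$ for all $\alpha\in\mathcal{A}$ then follows directly from Proposition \ref{prop:1d overdamped opt asym var}. No substantial obstacle is anticipated; the argument is a short sign-tracking exercise, and the only delicate step — a careful treatment of the zero set of $\phi'(x)\phi'(y)$ — is rendered vacuous by the structure of the integrand in \eqref{eq:1d overdamped asym var}.
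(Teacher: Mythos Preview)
Your proposal is correct and matches the paper's approach exactly: the paper states that the corollary is a direct consequence of Lemma~\ref{lem:1d Poisson} and Proposition~\ref{prop:1d overdamped opt asym var}, and you have simply spelled out the straightforward sign comparison that makes this precise. The only minor point worth noting is that your claim $\sgn(\phi'(x)) = -\sgn(x)$ whenever $\phi'(x)\neq 0$ could in principle fail at $x=0$, but this is harmless since $\{x=0\}$ has Lebesgue measure zero (and in fact the symmetry of $V$ and $f$ forces $\phi$ to be even, hence $\phi'(0)=0$).
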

A few comments on the findings  from Corollaries \ref{cor:monotone} and \ref{cor:symmetry} are in order. If the observable is monotone (Corollary \ref{cor:monotone}), then it turns out that choosing the `mirror coupling' $\mathrm{d}B_t^x = - \mathrm{d}B_t^y$ in \eqref{eq:1d coupled BM} is optimal in the sense of Problem \ref{prob:linearised OT}. This result has a clear connection to popular variance reduction techniques such as `antithetic variates' \cite[Chapter 9.2]{kroese2013handbook}, where correlations between random variables are used to produce cancellations. In the case of symmetric observables (Corollary \ref{cor:symmetry}), optimal coupling in the sense of Problem \ref{prob:linearised OT} leads to a more sophisticated strategy: When the two particles (the locations of which are again denoted by $x$ and $y$) `are on the same side of the potential' (meaning that $x \ge 0$ and $y \ge 0$ or $x \le 0$ and $y \le 0$), then the Brownian motions should be coupled according to $\mathrm{d}B_t^x = - \mathrm{d}B_t^y$, as in the case of monotone observables. When the particles are on opposite sides ($x \ge 0$ and $y \le 0$ or $x \le 0$ and $y \ge 0$), according to Corollary \ref{cor:symmetry} it is best to switch to `synchronous coupling', $\mathrm{d}B_t^x = \mathrm{d}B_t^y$. Intuitively this can be understood as follows: By symmetry, the situation where $x \ge 0$ and $y \le 0$ with synchronous coupling ($\mathrm{d}B_t^x = \mathrm{d}B_t^y$) is equivalent to $x \ge 0$ and $y \ge 0$, with mirror coupling ($\mathrm{d}B_t^x = -\mathrm{d}B_t^y$). Since $f$ is monotone on $[0,\infty)$, this argument provides a plausible explanation for optimality by appealing to Corollary \ref{cor:monotone}. Finally, let us mention that numerical experiments show that using mirror coupling in the case of observables of the type encountered in Corollary \ref{cor:symmetry} (`naive antithetic variates') actually leads to a less effective sampler in terms of the asymptotic variance (see Figure \ref{fig:quad_obs}).

Let us consider now the same set-up as in the numerical experiments presented in Figure \ref{fig:overdamped coupling comparison}, i.e. we consider a Gaussian target measure ($V(x)=\frac{1}{2}x^2$), and the observables $f_1(x)=x$ (`linear'), $f_2(x)=x^2$ (`quadratic') and $f_3(x)=x^2-x$ (`mixed'). In Figure \ref{fig:overdamped variance}, we plot the asymptotic variances for $f_1$, $f_2$ and $f_3$, associated to different coupling schemes as a function of the coupling strength $\beta$. To be precise, the `Poisson' coupling is defined by 
\begin{equation}
\label{eq:beta coupling}
\alpha(x,y) = 
\begin{cases}
2 \sin \beta \cos \beta \quad & \mbox{if }\; \phi'(x) \phi'(y) \le 0, \\
-2\sin \beta \cos \beta \quad & \mbox{if }\; \phi'(x) \phi'(y) > 0, \\
\end{cases}
\end{equation}
$\phi$ being the solution to the Poisson equation \eqref{eq:1d Poisson} for the corresponding observable, and $\beta \in [0,\frac{\pi}{4}]$ denoting the coupling strength\footnote{For $\beta\in [0,\frac{\pi}{4}]$, the function $2 \sin \beta \cos$ is monotone, taking values in $[0,1]$. We chose this parametrisation in order for it to be consistent with \eqref{eq:1d coupling matrix}.}. For $\beta = 0$, we recover independent coupling, whereas $\beta = \frac{\pi}{4}$ leads to the optimal coupling from Proposition \ref{prop:1d overdamped opt asym var}. According to Corollaries \ref{cor:monotone} and \ref{cor:symmetry}, Poisson coupling coincides with mirror coupling for $f_1$ and with symmetric coupling for $f_2$. To illustrate the effect of couplings that are not tailored to the observable of interest, we also plot the asymptotic variances associated to symmetric coupling for $f_1$, mirror coupling for $f_2$, and both mirror and symmetric coupling for $f_3$. For $f_3$, we furthermore consider a coupling strategy that uses the derivative of the observable instead of the derivative of the solution to the Poisson equation, specifically, the coupling induced by \begin{equation}
\label{eq:observable coupling}
\alpha(x,y) = 
\begin{cases}
2 \sin \beta \cos \beta \quad & \mbox{if } \;f'(x) f'(y) \le 0, \\
-2\sin \beta \cos \beta \quad & \mbox{if } \; f'(x) f'(y) > 0. \\
\end{cases}
\end{equation}
The motivation for this is that in applications, the solution to the Poisson equation is often hard to obtain\footnote{However, often one aims to approximate the solution to the Poisson equation in order to use it as a control variate, see for instance \cite{dellaportas2012control,mijatovic2018poisson,RousselStoltz2017}. It suggests itself to use those approaches in conjunction with the coupling strategy developed here.}, whereas the gradient of the observable is readily available. By integration by parts we have
\begin{equation}
\label{eq:int by parts}
\int_{\mathbb{R}^d} \nabla f \cdot \nabla \phi \, \mathrm{d}\pi = \int_{\mathbb{R}^d} f^2 \, \mathrm{d}\pi \ge 0, 
\end{equation} 
suggesting us to use $\nabla f$ as a surrogate for $\nabla \phi$ (at equilibrium, the scalar product of $\nabla f $ and $\nabla \phi$ is positive on average).

In all the cases considered, the Poisson coupling turns out be the most efficient, uniformly in the coupling strength $\beta$. The fact that the absolute value of the derivative $\frac{\mathrm{d}\sigma_F^2}{\mathrm{d}\beta}\vert_{\beta=0}$  is maximal for Poisson coupling is precisely the content of Proposition \ref{prop:1d overdamped opt asym var}, whereas the fact that the asymptotic variance for Poisson coupling is maximal at $\beta = 0$ follows from Remark \ref{rem:optimal coupling improves}. It is interesting to note the monotonity of the asymptotic variance associated to Poisson coupling with respect to the coupling strength $\beta$; this phenomenon is not covered by our theory. Importantly, the efficiency of a certain coupling strongly depends on the considered observable. Indeed, the mirror coupling (which is excellent for the linear observable, see Figure \ref{fig:lin_obs}) leads to an increase of the asymptotic variance for the quadratic observable (see Figure \ref{fig:quad_obs}). Similarly, the symmetric coupling (suited for the quadratic observable), does not improve the performance for the linear observable (but the performance is also not worsened). In Figure \ref{fig:mixed_obs}, we observe that the coupling based on the derivative of the observable (see \eqref{eq:observable coupling}) works almost as well as the Poisson coupling, so this might be a reasonable choice in applications, although further studies are needed. For a comment about the minimum of the graph associated to mirror coupling for the mixed observable (see Figure \ref{fig:mixed_obs}) we refer to Remark \ref{rem:power expansion}. 
\begin{figure}
	\captionsetup[subfigure]{justification=centering}
	\caption{Dependence of the asymptotic variance on different coupling schemes and coupling strengths.}
	\label{fig:overdamped variance}
	\begin{subfigure}[b]{0.5 \textwidth}
		\includegraphics[width=\textwidth]{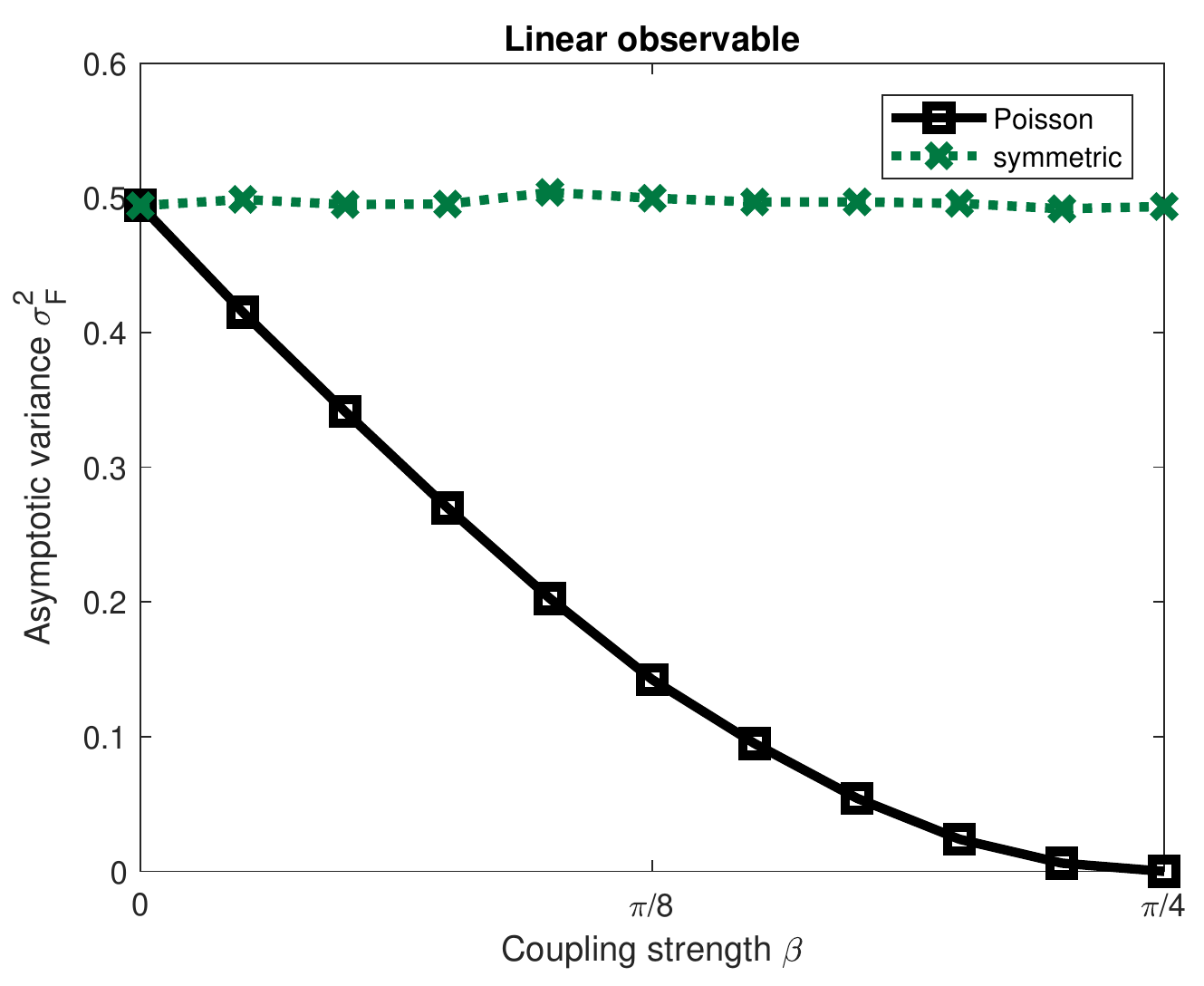}
		\subcaption{Linear observable $f_1(x) = x$.}
		\label{fig:lin_obs}
	\end{subfigure}
	\begin{subfigure}[b]{0.5 \textwidth}
		\includegraphics[width=\textwidth]{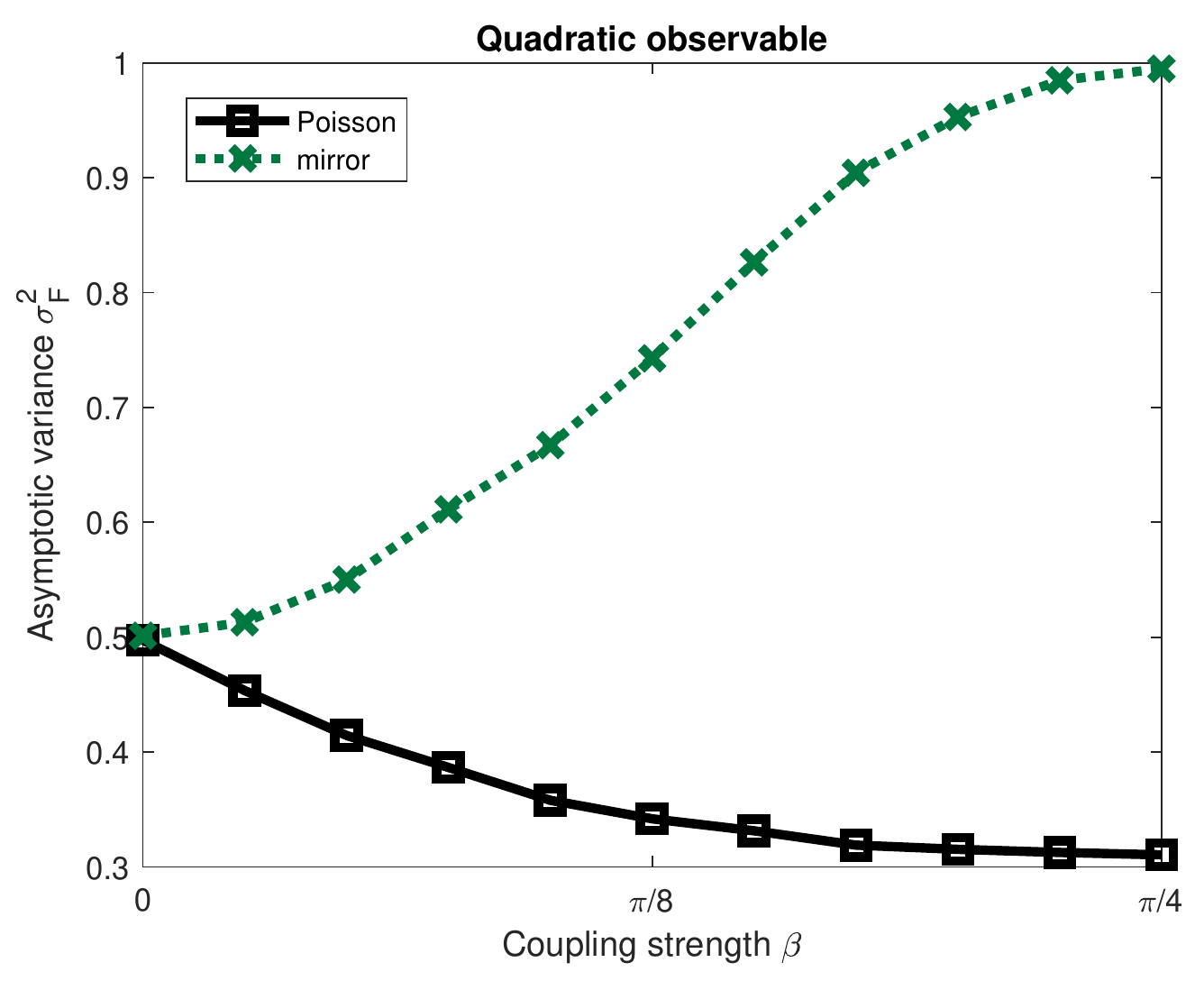}
		\subcaption{Quadratic observable $f_2(x) = x^2$.}
		\label{fig:quad_obs}
	\end{subfigure}
	\begin{subfigure}[b]{0.5 \textwidth}
		\includegraphics[width=\textwidth]{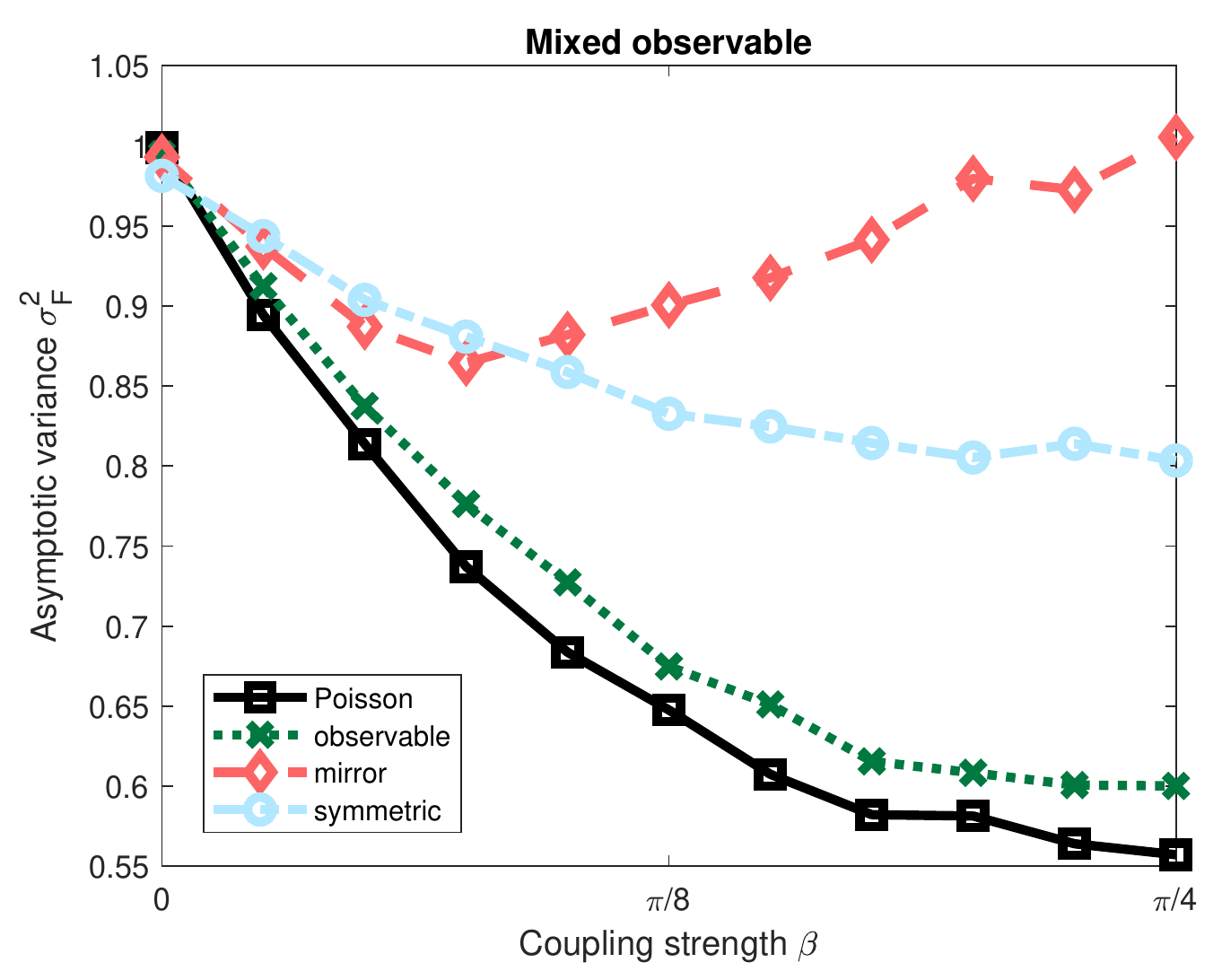}
		\subcaption{Mixed observable: $f_3(x) = x^2 - x$.}
		\label{fig:mixed_obs}
	\end{subfigure}
\end{figure}
\subsection{Overdamped Langevin dynamics with multiple particles in arbitrary dimensions}
\label{sec:many particles}

Let us extend the discussion from the previous section to arbitrary dimensions, first considering the case of two particles (as done in Example \ref{ex:overdamped 2 particles}). Using the expression \eqref{eq:Gamma 2 overdamped}, we see that
\begin{equation}
\delta\sigma_F^2(\Gamma_{\alpha}) = \int_{\mathbb{R}^d \times \mathbb{R}^d} \left(\nabla \phi (y) \cdot \alpha(x,y) \nabla \phi(x)\right) e^{-(V(x)+V(y))}\, \mathrm{d}x \mathrm{d}y,
\end{equation}
where $\alpha:\mathbb{R}^d \times \mathbb{R}^d \rightarrow \mathbb{R}^{d \times d}$ is a matrix-valued function satisfying \eqref{eq:overdamped 2 alpha bound}, i.e.
\begin{equation}
\alpha \in \mathcal{A} := \left\{ \alpha : \mathbb{R}^d \times \mathbb{R}^d \rightarrow \mathbb{R}^{d \times d} \;\, \text{measurable,  } \alpha(x,y)^T \alpha(x,y) \le I_{d \times d} \; \text{for all } x,y \in \mathbb{R}^d \right\},
\end{equation}
and $\phi$ is the solution to the Poisson equation
\begin{equation}
\label{eq:overdamped Poisson}
-(-\nabla V \cdot \nabla + \Delta) \phi = f, \quad \pi(\phi) = 0.
\end{equation}
Since \eqref{eq:overdamped 2 alpha bound} implies 
\begin{equation}
\vert \nabla \phi (y) \cdot \alpha(x,y) \nabla \phi(x) \vert \le \vert \nabla \phi (x) \vert \vert \nabla \phi(y) \vert, \quad x,y \in \mathbb{R},
\end{equation}
we get the following optimality result:
\begin{proposition}
	Assume that $\alpha^* \in \mathcal{A}$ is chosen such that
	\begin{equation}
	\label{eq:arbitraryd}
	\nabla\phi (y) \cdot \alpha^*(x,y) \nabla \phi (x) = - \vert \nabla \phi(x) \vert \vert \nabla \phi (y) \vert, \quad x,y \in \mathbb{R}^d.
	\end{equation} 
	Then $\alpha^*$ solves Problem \ref{prob:linearised OT}, i.e. $\delta\sigma_F^2(\Gamma_{\alpha^*}) \le \delta\sigma_F^2(\Gamma_\alpha)$ for all $\alpha \in \mathcal{A}$.
\end{proposition}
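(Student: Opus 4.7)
The plan is to exploit two structural features of the problem: the objective $\delta\sigma_F^2(\Gamma_\alpha)$ is \emph{linear} in $\alpha$, and the admissible set $\mathcal{A}$ is characterised by a \emph{pointwise} constraint. Together these allow the minimisation to be carried out pointwise in $(x,y) \in \mathbb{R}^d \times \mathbb{R}^d$ under the positive weight $e^{-(V(x)+V(y))}$.

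The first step would be to observe that the Loewner condition $\alpha(x,y)^T \alpha(x,y) \le I_{d \times d}$ is exactly the statement that the operator norm $\Vert \alpha(x,y) \Vert_{\mathrm{op}} \le 1$, i.e.\ $\vert \alpha(x,y) u \vert \le \vert u \vert$ for all $u \in \mathbb{R}^d$. Combining this with the Cauchy--Schwarz inequality yields, for every $\alpha \in \mathcal{A}$ and every $(x,y)$, the pointwise estimate
\begin{equation*}
\vert \nabla\phi(y) \cdot \alpha(x,y)\nabla\phi(x)\vert \le \vert \nabla\phi(y)\vert\, \vert \alpha(x,y)\nabla\phi(x)\vert \le \vert \nabla\phi(x)\vert\, \vert \nabla\phi(y)\vert ,
\end{equation*}
which is precisely the bound flagged in the paragraph preceding the proposition. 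In particular,
\begin{equation*}
\nabla\phi(y) \cdot \alpha(x,y)\nabla\phi(x) \ge -\vert \nabla\phi(x)\vert\, \vert \nabla\phi(y)\vert = \nabla\phi(y) \cdot \alpha^*(x,y)\nabla\phi(x),
\end{equation*}
the last equality being the defining property \eqref{eq:arbitraryd} of $\alpha^*$.

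The second step is then immediate: integrating this pointwise inequality against the nonnegative measure $e^{-(V(x)+V(y))} \,\mathrm{d} x\, \mathrm{d} y$ yields $\delta\sigma_F^2(\Gamma_\alpha) \ge \delta\sigma_F^2(\Gamma_{\alpha^*})$. There is essentially no obstacle: the argument is a one-line pointwise optimisation, and this is consistent with the fact that the linearised objective \eqref{eq:linearised objective} is an integral of a quantity linear in $\alpha$ constrained pointwise, so optimisation commutes with integration.

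The only accompanying point worth noting (not part of the statement to be proved) is that such an $\alpha^* \in \mathcal{A}$ indeed exists. On the open set $U = \{(x,y) : \nabla\phi(x)\neq 0,\,\nabla\phi(y)\neq 0\}$ one may take the rank-one choice $\alpha^*(x,y) = -\hat n(y)\hat n(x)^T$, where $\hat n(z) := \nabla\phi(z)/\vert \nabla\phi(z)\vert$; then $\alpha^{*T}\alpha^* = \hat n(x)\hat n(x)^T \le I_{d\times d}$ in Loewner order and a direct calculation confirms \eqref{eq:arbitraryd}. Off $U$, the integrand in $\delta\sigma_F^2$ vanishes identically, so any measurable extension with operator norm at most one (e.g.\ $\alpha^* \equiv 0$) preserves both membership in $\mathcal{A}$ and optimality.
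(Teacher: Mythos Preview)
Your proof is correct and follows essentially the same approach as the paper: the paper simply records the pointwise bound $\vert \nabla\phi(y)\cdot\alpha(x,y)\nabla\phi(x)\vert \le \vert\nabla\phi(x)\vert\,\vert\nabla\phi(y)\vert$ (as a consequence of \eqref{eq:overdamped 2 alpha bound}) immediately before the proposition and treats the result as immediate, which is exactly the pointwise-optimisation-then-integrate argument you spell out. Your additional remark on existence gives a rank-one construction rather than the reflection matrix the paper writes down afterwards, but both are valid and this lies outside the statement to be proved.
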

In the case when the solution $\phi$ to the Poisson equation is known it is straightforward to construct a matrix-valued function $\alpha^*$ such that  both \eqref{eq:overdamped 2 alpha bound} and \eqref{eq:arbitraryd} are satisfied. For instance, any orthogonal matrix trivially satisfies \eqref{eq:overdamped 2 alpha bound}, and \eqref{eq:arbitraryd} can be dealt with by choosing an appropriate rotation or reflection. As an example let us mention the following reflection in the plane spanned by $\nabla \phi(x)$ and $\nabla \phi(y)$:
\begin{equation}
\label{eq:alpha star}
\alpha^*(x,y) = \begin{cases}
I_{d \times d} - 2  \frac{\left( \widehat{\nabla \phi(x)} + \widehat{\nabla \phi(y)} \right)\left( \widehat{\nabla \phi(x)} + \widehat{\nabla \phi(y)} \right)^T}{\left( \widehat{\nabla \phi(x)} + \widehat{\nabla \phi(y)} \right)^2} \, &\text{if } \nabla\phi(x) \neq 0, \nabla \phi(y) \neq 0, \vspace{-5pt}\\
& \hspace{10pt}\widehat{\nabla \phi(x)} + \widehat{\nabla \phi(y)} \neq 0,
\vspace{10pt}\\
I_{d \times d} & \text{otherwise.}
\end{cases}
\end{equation}
Here, $\widehat{\nabla \phi} = \frac{\nabla\phi}{\vert \nabla \phi \vert}$ is used to denote the normalised gradient of $\phi$. As mentioned in Section \ref{sec:numerics 1d overdamped}, the solution to the Poisson equation is usually hard to obtain in applications (but the popular methodology using control variates relies on approximations thereof). Inspired by the integration by parts formula \eqref{eq:int by parts}, it seems reasonable to use the normalised gradient of the observable $\widehat{\nabla f}$ as a surrogate for $\widehat{\nabla \phi}$, i.e.
\begin{equation}
\label{eq:alpha f}
\alpha_f(x,y) = \begin{cases}
I_{d \times d} - 2  \frac{\left( \widehat{\nabla f(x)} + \widehat{\nabla f(y)} \right)\left( \widehat{\nabla f(x)} + \widehat{\nabla f(y)} \right)^T}{\left( \widehat{\nabla f(x)} + \widehat{\nabla f(y)} \right)^2} \, &\text{if } \nabla f(x) \neq 0, \nabla f(y) \neq 0, \vspace{-5pt}\\
& \hspace{10pt}\widehat{\nabla f(x)} + \widehat{\nabla f(y)} \neq 0,
\vspace{10pt}\\
I_{d \times d} & \text{otherwise.}
\end{cases}
\end{equation}  
We recall that a comparison of the couplings associated to \eqref{eq:alpha star} and \eqref{eq:alpha f} was performed in the one-dimensional case (see Figure \ref{fig:mixed_obs}) where $\alpha_f$ almost achieved the same reduction of the asymptotic variance as $\alpha^*$. Based on \eqref{eq:int by parts} and numerical experiments we conjecture that choosing $\alpha_f$ guarantees an improvement in terms of the asymptotic variance for small perturbations:
\begin{conjecture}
	Let $f \in L_0^2(\pi)$ and $\phi \in L_0^2(\pi)$ be the corresponding solution to the Poisson equation \eqref{eq:overdamped Poisson}. Then 
	\begin{equation}
	\delta\sigma_F^2(\Gamma_{\alpha}) = \int_{\mathbb{R}^d \times \mathbb{R}^d} \left(\nabla \phi (y) \cdot \alpha_f(x,y) \nabla \phi(x)\right) e^{-(V(x)+V(y))}\, \mathrm{d}x \mathrm{d}y \le 0.
	\end{equation}	
\end{conjecture}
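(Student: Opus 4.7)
My plan is to exploit the explicit Householder structure of $\alpha_f(x,y) = I - 2 uu^T/|u|^2$, where $u(x,y) = h_x + h_y$, $h_x := \widehat{\nabla f(x)}$, and $|u|^2 = 2(1 + h_x \cdot h_y)$, together with the integration-by-parts identity \eqref{eq:int by parts}. I start by decomposing $\nabla \phi(x) = s(x) h_x + T(x)$ with $s(x) := h_x \cdot \nabla \phi(x)$ and $T(x) \perp h_x$. Using the elementary identities $\alpha_f h_x = -h_y$ and $\alpha_f h_y = -h_x$, a careful expansion yields the pointwise formula
\begin{equation*}
\nabla \phi(y) \cdot \alpha_f(x,y) \nabla \phi(x) = -s(x) s(y) + T(x) \cdot T(y) - \frac{(h_y \cdot T(x))(h_x \cdot T(y))}{1 + h_x \cdot h_y}.
\end{equation*}
The first summand, after integration against $e^{-V(x)-V(y)}\,\mathrm{d}x\mathrm{d}y$, contributes $-Z^2 (\mathbb{E}_\pi s)^2 \le 0$ and is interpreted via \eqref{eq:int by parts}, which reads $\mathbb{E}_\pi(|\nabla f|\, s) = \mathbb{E}_\pi f^2 \ge 0$ and thus expresses a positive correlation between $s$ and $|\nabla f|$.

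As a sanity check, in $d=1$ the field $T$ vanishes identically and the conjecture reduces to $-Z^2(\mathbb{E}_\pi[\sgn(f')\,\phi'])^2 \le 0$, recovering Corollaries \ref{cor:monotone} and \ref{cor:symmetry}. In $d=2$, since the projection of $h_y$ onto the orthogonal complement of $h_x$ points (up to sign) along a canonical unit vector $h_x^\perp$, the last two terms in the displayed formula collapse to $t(x)t(y)$, with $t := h^\perp \cdot \nabla \phi$ defined via a fixed $90^\circ$ rotation convention. The conjecture then reduces to the scalar inequality $(\mathbb{E}_\pi t)^2 \le (\mathbb{E}_\pi s)^2$, which I would attempt to prove by combining \eqref{eq:int by parts} with the auxiliary identity $\int|\nabla \phi|^2\mathrm{d}\pi = \int f\phi\,\mathrm{d}\pi$ and by exploiting that $h^\perp$ is tangential to the level sets of $f$, so that integration by parts along those level sets should produce extra cancellations.

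In arbitrary dimension, decomposing $T(x)$ further into its component along the projection of $h_y$ (a scalar $a(x,y)$) and an orthogonal remainder $T_x^\perp(y)$, a short computation using $\hat n_x \cdot \hat n_y = -h_x \cdot h_y$ yields
\begin{equation*}
\nabla \phi(y) \cdot \alpha_f(x,y) \nabla \phi(x) = -s(x) s(y) - a(x,y)\,a(y,x) + T_x^\perp(y) \cdot T_y^\perp(x),
\end{equation*}
so the plan becomes to control the three resulting double integrals separately, perhaps by Cauchy--Schwarz together with moment identities derived from the Poisson equation. The main obstacle, and presumably the reason the authors state this only as a conjecture, is that no pointwise or componentwise estimate can succeed: for a constant vector field $\psi$ orthogonal to a fixed $\widehat{\nabla f}$, one computes $\int \psi(y) \cdot \alpha_f(x,y) \psi(x)\,\mathrm{d}\mu\,\mathrm{d}\mu > 0$, showing that the desired sign is a strictly \emph{global} consequence of the Poisson relation $\phi = (-\mathcal{L})^{-1} f$ beyond the single identity \eqref{eq:int by parts}. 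I expect that closing the argument requires either a commutator identity relating $\mathcal{L}$ to directional differentiation along $\nabla f$, or a hypocoercivity-style weighted $L^2$ argument tailored to the vector-field quadratic form encoded by $\alpha_f$; neither of these appears routine.
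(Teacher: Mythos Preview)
The statement you are addressing is labelled as a \emph{conjecture} in the paper, and the paper offers no proof of it; it is explicitly left open, motivated only by the integration-by-parts identity \eqref{eq:int by parts} and by numerical evidence. There is therefore no ``paper's own proof'' to compare your proposal against.

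Your decomposition is correct and useful. The Householder identities $\alpha_f h_x = -h_y$, $\alpha_f h_y = -h_x$ are right, and the pointwise formula
\[
\nabla \phi(y) \cdot \alpha_f(x,y)\,\nabla \phi(x)
= -s(x)s(y) + T(x)\cdot T(y) - \frac{(h_y\cdot T(x))(h_x\cdot T(y))}{1+h_x\cdot h_y}
\]
checks out. The $d=1$ reduction to $-Z^2(\mathbb{E}_\pi[\sgn(f')\phi'])^2 \le 0$ is correct and is precisely what underlies Proposition~\ref{prop:1d overdamped opt asym var} and Remark~\ref{rem:optimal coupling improves}.

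However, your proposal is not a proof: you explicitly stop at the point where the real difficulty begins, namely controlling the sign of the $T$-contributions for $d\ge 2$. Your own counterexample with a constant vector field orthogonal to $\widehat{\nabla f}$ shows that no argument that forgets the Poisson relation $-\mathcal{L}\phi = f$ can work, and neither of the two strategies you suggest (a commutator identity linking $\mathcal{L}$ to differentiation along $\nabla f$, or a hypocoercivity-style weighted estimate) is carried out or even made precise. In the $d=2$ case your claimed reduction to $(\mathbb{E}_\pi t)^2 \le (\mathbb{E}_\pi s)^2$ is plausible but you give no argument for that inequality either, and it is not obvious that integrating by parts ``along level sets of $f$'' produces the required cancellation in general.

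In short: your analysis is sound as far as it goes and isolates the obstruction cleanly, but it does not resolve the conjecture, and the paper does not claim to either.
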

The complexity of the foregoing optimisation problems is increased substantially when considering more than two particles. From a practical perspective, it is desirable to specify the coupling in terms of the matrix-valued function $G$ appearing in \eqref{eq:overdamped full SDE} since this formulation is needed for the implementation of the numerical scheme. The linearised optimisation objective (Problem \ref{prob:linearised OT}) however is formulated in terms of the coupling operator $\Gamma$. Passing from the latter to $G$ involves the computationally expensive task of computing the square root of the matrix $Q$ defined in \eqref{eq:Q definition}. The construction and effective implementation of optimally coupled samplers with multiple particles therefore remains a subject for future work, but could be based on the results for the case of two particles. To give an impression, let us outline an idea based on the notation introduced in Remark \ref{rem:general dynamics overdamped}. It is natural to choose the orthogonal matrices $g_{ij}$ describing the coupling between the $i$th and the $j$th particle according to \eqref{eq:alpha star}, i.e.
\begin{equation}
\label{eq:gij weights}
g_{ij}(x_1,\ldots,x_n) = \begin{cases}
I_{d \times d} - 2  \frac{\left( \widehat{\nabla \phi(x_i)} + \widehat{\nabla \phi(x_j)} \right)\left( \widehat{\nabla \phi(x_i)} + \widehat{\nabla \phi(x_j)} \right)^T}{\left( \widehat{\nabla \phi(x_i)} + \widehat{\nabla \phi(x_j)} \right)^2} \, &\text{if } \nabla\phi(x_i) \neq 0, \nabla \phi(x_j) \neq 0, \vspace{-5pt}\\
& \hspace{10pt}\widehat{\nabla \phi(x_i)} + \widehat{\nabla \phi(x_j)} \neq 0,
\vspace{10pt}\\
I_{d \times d} & \text{otherwise,}
\end{cases}
\end{equation}    
or, when the solution $\phi$ to the Poisson equation (or an approximation thereof) is not available, according to \eqref{eq:alpha f}. Since the benefit of the coupling in terms of reducing the asymptotic variance is directly related to the value of the expression in \eqref{eq:arbitraryd}, it is plausible to choose the weights $w_{ij}$ (see \eqref{eq:weights}) in such a way that particle $x_i$ is preferentially coupled to particle $x_j$ if $\vert \nabla \phi(x_i) \vert$ and $\vert \nabla \phi(x_j) \vert$ are similar in magnitude. To make this precise, denote by $\sigma : \{1,\ldots,n \} \rightarrow \{1,\ldots,n\}$ the permutation that orders the particles according to $\vert \nabla \phi \vert$, i.e. 
\begin{equation}
\vert \nabla \phi(x_{\sigma(1)}) \vert \ge \vert \nabla \phi(x_{\sigma(2)}) \vert \ge \ldots \ge \vert \nabla \phi(x_{\sigma(n-1)}) \vert \ge \vert \nabla \phi(x_{\sigma(n)}) \vert.
\end{equation}
Then, denoting the coupling strength by $\beta \in [0,\frac{\pi}{4}]$, we can set the weights as follows:
\begin{subequations}
	\label{eq:sort weights}
	\begin{align}
	& w_{ii} = \cos \beta, \quad \quad i=1, \ldots,n, \nonumber \\
	w_{\sigma(1)\sigma(2)} = \sin \beta, \quad & w_{\sigma(3)\sigma(4)}  = \sin \beta, \quad\ldots \quad
	w_{\sigma(n-1)\sigma(n)} = \sin \beta, \tag{\ref*{eq:sort weights}} \\
	& w_{ij} = 0\quad \text{otherwise.} \nonumber 
	\end{align}
\end{subequations}
Let us emphasize that the sorting of the particles according to $\vert \nabla \phi\vert$ is supposed to be performed at every time step. We have compared this coupling strategy to simple pairwise coupling without sorting\footnote{This is equivalent to running $n/2$ two-particle samplers independently in parallel.}, i.e. replacing the second line of  \eqref{eq:sort weights} by 
\begin{equation}
w_{12} = \sin \beta, \quad  w_{34}  = \sin \beta, \quad\ldots \quad
w_{n-1,n} = \sin \beta,
\end{equation}
for the example of sampling a standard Gaussian measure ($V=\frac{1}{2}\vert x \vert^2$) in $d=10$ dimensions with $n=10$ particles for the quadratic observable $f_1(x) = \frac{1}{2}\vert x \vert^2$ and the mixed observable $f_2(x) = 5 \vert x \vert^2 + l \cdot x$, where $l = (1,\ldots,1)$. As Figure \ref{fig:sorting} shows, the sorting strategy as detailed in \eqref{eq:sort weights} leads to a smaller asymptotic variance in comparison to simple pairwise couplings. 
\begin{figure}
	\captionsetup[subfigure]{justification=centering}
	\caption{Comparison between pairwise couplings with and without sorting according to $\vert \nabla \phi \vert$ for a Gaussian target measure in $d=10$ dimensions and with $n=10$ particles.}
	\label{fig:sorting}
	\begin{subfigure}[b]{0.5 \textwidth}
		\includegraphics[width=\textwidth]{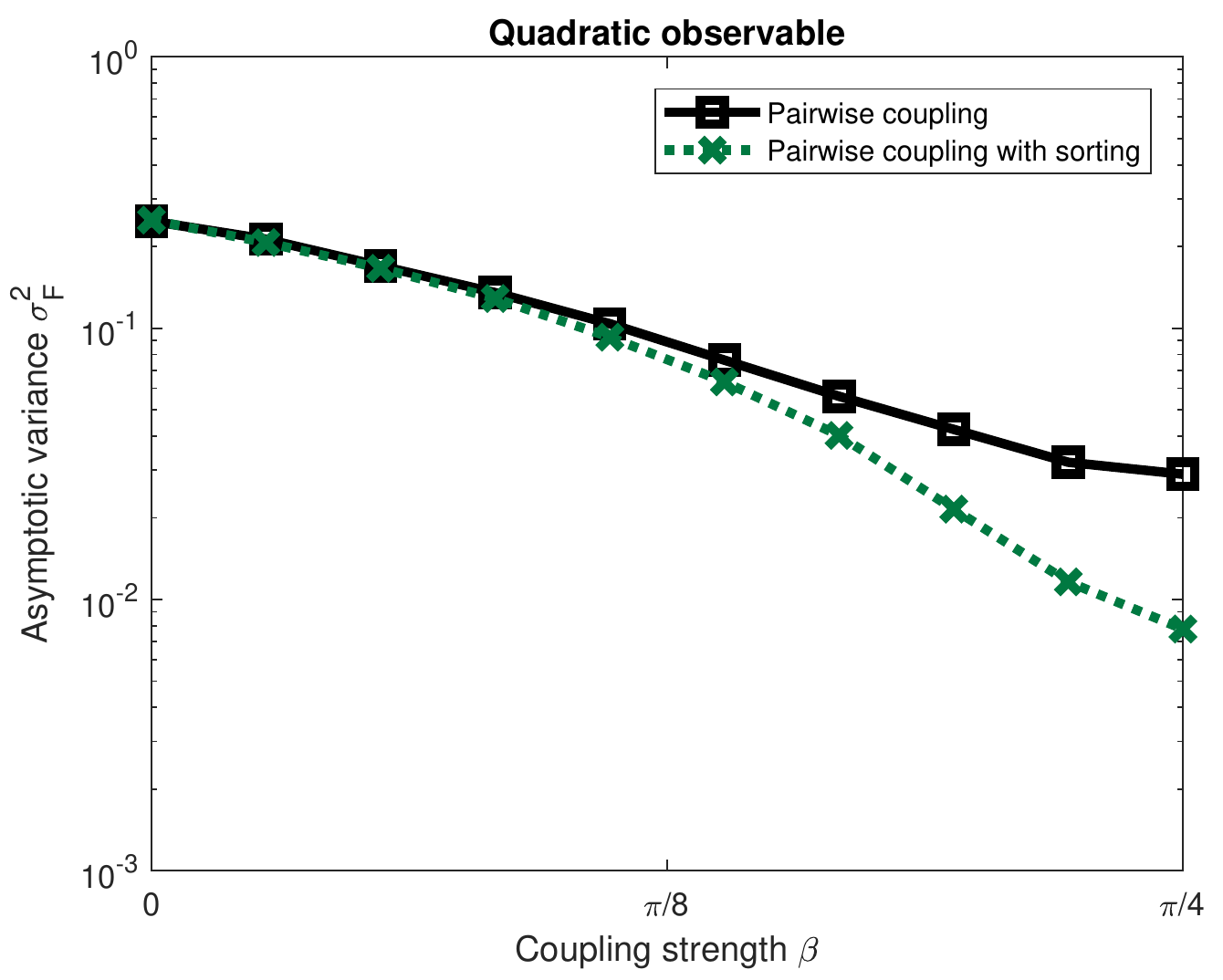}
		\caption{Quadratic observable: $f(x) = \frac{1}{2}\vert x \vert^2$.}
	\end{subfigure}
	\begin{subfigure}[b]{0.5 \textwidth}
		\includegraphics[width=\textwidth]{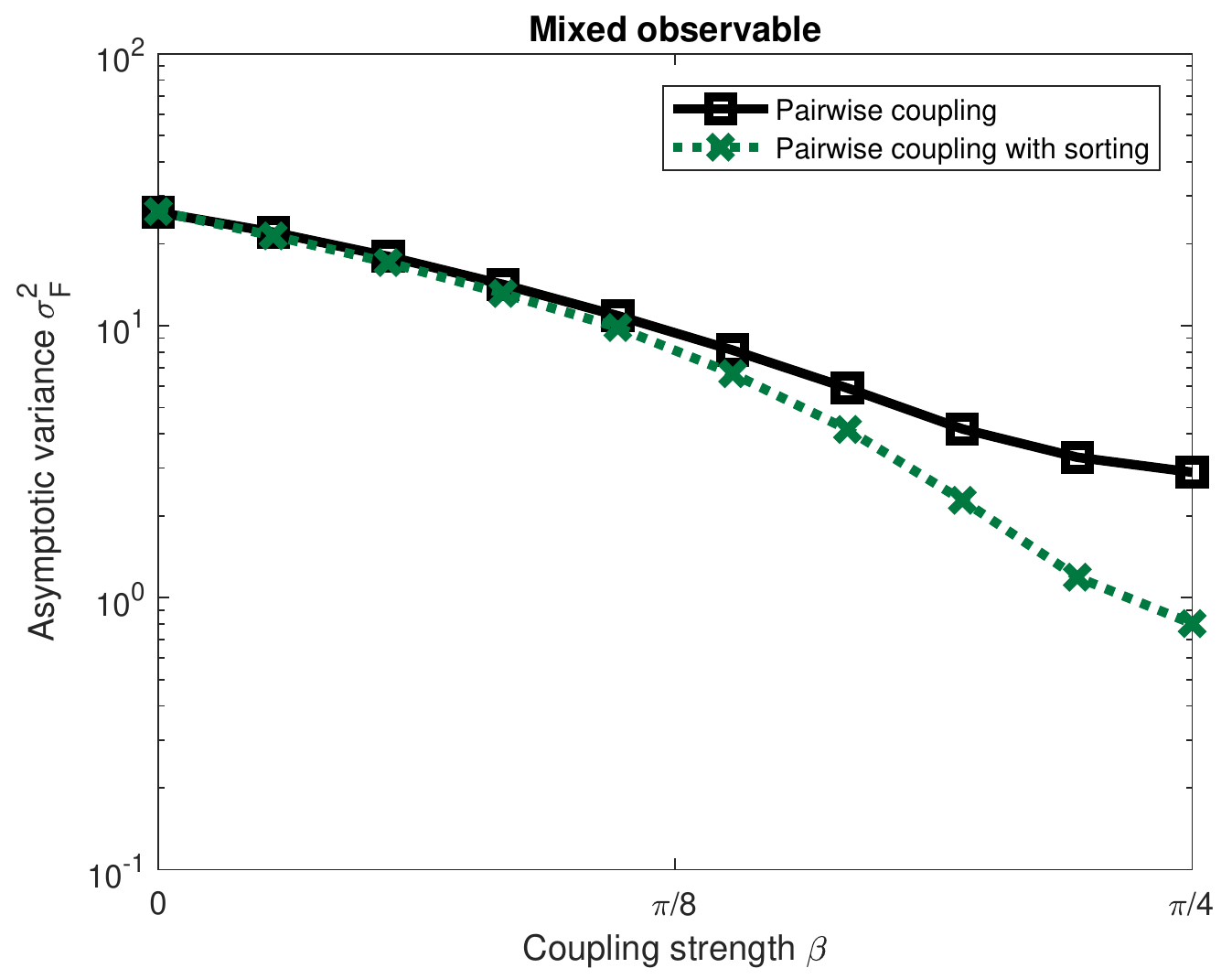}
		\caption{Mixed observable: $f(x) = 5\vert x \vert^2 + l \cdot x$.} 
	\end{subfigure}
\end{figure}
\subsection{The zigzag process}
Recall the setting from Section \ref{sec:zigzag} and fix an observable of interest $f \in L_0^2(\tilde{\pi})$ \footnote{We use the notation $\tilde{\pi}(\mathrm{d}x) = \frac{1}{Z} e^{-V(x)} \mathrm{d}x$ to distinguish it from the invariant measure $\pi$ on the full space $\mathbb{R} \times \{-1,1\}$, given in \eqref{eq:zigzag invariant measure}.}. For notational convenience, let us introduce the shorthands 
\begin{subequations}
	\begin{align*}
	& \alpha_{++}(x,y):= \alpha(x,y,+1,+1), \quad & \alpha_{+-}(x,y):= \alpha(x,y,+1,-1), \\
	& \alpha_{-+}(x,y):= \alpha(x,y,-1,+1), \quad 
	& \alpha_{--}(x,y):= \alpha(x,y,-1,-1). 
	\end{align*}
\end{subequations}
Taking the constraint \eqref{eq:zig zag constraint} into account, we will optimise over the set
\begin{equation*}
\mathcal{A} = \{\alpha:\mathbb{R}^2 \times\{-1,1\}^2 \rightarrow \mathbb{R} \,\vert \, \alpha\,\mbox{measurable, } 0 \le \alpha \le \min \left(\lambda(x,\theta_x),\lambda(y,\theta_y)\right) \}.
\end{equation*}
The corresponding coupling operators (see \eqref{eq:zigzag Gamma}) will be denoted by $\Gamma_{\alpha}$. We have the following lemma the proof of which can be found in Appendix \ref{sec:proofs_pert}.
\begin{lemma}
	\label{lem:zigzag dsigma}
	The zigzag process satisfies
	\begin{subequations}
		\label{eq:asym var loc zig zag}
		\begin{align}
		\delta\sigma_F^2(\Gamma_{\alpha}) & = \frac{1}{4} \int_{{\mathbb{R}^2}}  \tilde{\alpha}(x,y)  
		\cdot \tilde{\phi}'(x) \tilde{\phi}'(y) e^{-\left(V(x) + V(y)\right)}\,\mathrm{d}x \mathrm{d}y, \nonumber \\
		\tilde{\alpha}(x,y) & = \alpha_{++}(x,y) + \alpha_{--}(x,y) - \alpha_{+-}(x,y) - \alpha_{-+}(x,y),
		\tag{\ref*{eq:asym var loc zig zag}} 
		\end{align} 
	\end{subequations}
	where $\tilde{\phi} \in L^2(\tilde{\pi})$ is a solution to 
	\begin{equation}
	\label{eq:Zig zag auxiliary Poisson}
	-(-V'\tilde{\phi}' + \tilde{\phi}'') = f.
	\end{equation}
\end{lemma}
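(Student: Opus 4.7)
The plan is to compute $\delta\sigma_F^2(\Gamma_{\alpha}) = \int_{\bar{E}}\Gamma_{\alpha}\xi\,\mathrm{d}\bar{\pi}_0$ directly, exploiting the simple structure of $\Gamma_{\alpha}$ and of the Poisson solutions under an odd/even decomposition in $\theta$. With $n=2$ we have $\xi = \tfrac{1}{4}\phi_1(x,\theta_x)\phi_2(y,\theta_y)$, where each $\phi_i$ is the (zigzag) Poisson solution associated to the observable $f$. Applying $\Gamma_{\alpha}$ as in \eqref{eq:zigzag Gamma} to this product factorises neatly into
\begin{equation*}
\Gamma_{\alpha}\xi = \tfrac{1}{4}\alpha(x,y,\theta_x,\theta_y)\bigl[\phi_1(x,\theta_x)-\phi_1(x,-\theta_x)\bigr]\bigl[\phi_2(y,\theta_y)-\phi_2(y,-\theta_y)\bigr].
\end{equation*}

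The next step is to decompose the Poisson solution into its even and odd parts in $\theta$: write $\phi(x,\theta) = \phi^e(x) + \theta\phi^o(x)$, so that $\phi(x,\theta)-\phi(x,-\theta) = 2\theta\phi^o(x)$. Substituting into the previous display gives $\Gamma_{\alpha}\xi = \alpha\,\theta_x\theta_y\,\phi_1^o(x)\phi_2^o(y)$. Integrating against $\bar{\pi}_0 \propto e^{-(V(x)+V(y))}\mathrm{d}x\mathrm{d}y \otimes (\delta_{-1}+\delta_{+1})^{\otimes 2}$ and summing over $(\theta_x,\theta_y)\in\{\pm 1\}^2$ produces exactly the combination $\tilde{\alpha}(x,y) = \alpha_{++}+\alpha_{--}-\alpha_{+-}-\alpha_{-+}$, which (modulo normalisation constants) yields the claimed integrand, provided one can identify $\phi^o = \tilde{\phi}'$.

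To establish this identification I will plug the decomposition $\phi = \phi^e + \theta\phi^o$ into the one-particle Poisson equation $\mathcal{L}\phi = -f$ (with $\mathcal{L}$ as in \eqref{eq:zigzag generator} and $f$ independent of $\theta$), obtaining a system of two scalar ODEs indexed by $\theta\in\{\pm 1\}$. Adding these two equations eliminates $\phi^e$ and leaves
\begin{equation*}
(\phi^o)'(x) - \bigl[\lambda(x,+1)-\lambda(x,-1)\bigr]\phi^o(x) = -f(x).
\end{equation*}
The key elementary observation is that $\lambda(x,+1)-\lambda(x,-1) = \max(0,V'(x)) - \max(0,-V'(x)) = V'(x)$, independently of the excess rate $\gamma$. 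Hence $\phi^o$ satisfies the same first-order linear ODE as $u:=\tilde{\phi}'$, where $\tilde{\phi}$ solves \eqref{eq:Zig zag auxiliary Poisson}. Since the Poisson solutions are centred and the ODE is of first order, the solution is unique in the relevant weighted $L^2$ space, giving $\phi^o = \tilde{\phi}'$.

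The main obstacle is mostly bookkeeping: keeping straight the four sign combinations in the summation that yields $\tilde{\alpha}$, and matching the normalising constants between the expression for $\bar{\pi}_0$ and the unnormalised density $e^{-(V(x)+V(y))}$ that appears in the lemma. Beyond that, the verification of $\lambda(x,+1)-\lambda(x,-1)=V'(x)$ is a small case-split on the sign of $V'$, and the identification $\phi^o = \tilde{\phi}'$ then follows from the observation that the ODE satisfied by $\tilde{\phi}'$ agrees with the one derived for $\phi^o$; one does not even need uniqueness, since the lemma only claims that \emph{some} such $\tilde{\phi}$ exists.
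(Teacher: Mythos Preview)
Your proposal is correct and follows essentially the same approach as the paper's proof: both compute $\Gamma_\alpha\xi$, extract the factor $[\phi(x,\theta_x)-\phi(x,-\theta_x)][\phi(y,\theta_y)-\phi(y,-\theta_y)]$, sum over $(\theta_x,\theta_y)$ to produce $\tilde\alpha$, and then add the two $\theta=\pm1$ Poisson equations (using $\lambda_+-\lambda_-=V'$) to show that $\tfrac12(\phi_+-\phi_-)=\tilde\phi'$. Your even/odd decomposition $\phi=\phi^e+\theta\phi^o$ is just a repackaging of the paper's $\phi_\pm$ notation (with $\phi^o=\tfrac12(\phi_+-\phi_-)$), and your remark that existence of $\tilde\phi$ suffices matches the paper's final observation about uniqueness of the integrable solution.
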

\begin{remark}
	Observe the remarkable coincidence that \eqref{eq:Zig zag auxiliary Poisson} coincides with the Poisson equation \eqref{eq:1d Poisson} for the overdamped Langevin dynamics. We employ the notation $\tilde{\phi}$ to distinguish \eqref{eq:Zig zag auxiliary Poisson} from the Poisson equation \eqref{eq:Poisson Zig-Zag} in the whole space $\bar{E} = \mathbb{R}^2 \times \{-1,1\}^2$. 
\end{remark}
The following result is immediate from the expression \eqref{eq:asym var loc zig zag}:
\begin{proposition}
	\label{prop:optimal zigzag}
	Let $\alpha^* \in \mathcal{A}$ be given by
	\begin{equation}
	\alpha^*(x,y,\theta_x,\theta_y) = 
	\begin{cases}
	\min \left(\lambda(x,\theta_x),\lambda(y,\theta_y)\right),\quad & \mbox{if } \tilde{\phi}'(x)\tilde{\phi}'(x) \theta_x \theta_y \le 0 \\
	0, \quad & \mbox{otherwise.}
	\end{cases}
	\end{equation}
	Then $\Gamma_{\alpha^*}$ solves Problem \ref{prob:linearised OT} in the sense that $\delta\sigma_F^2 (\Gamma_{\alpha^*}) \le \delta\sigma_F^2 (\Gamma_{\alpha})$ for all $\alpha \in \mathcal{A}$.
\end{proposition}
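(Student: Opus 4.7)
The plan is to reduce the statement to a pointwise optimisation problem via Lemma \ref{lem:zigzag dsigma}, observing that only the signed combination $\tilde{\alpha} = \alpha_{++} + \alpha_{--} - \alpha_{+-} - \alpha_{-+}$ enters the objective, while each $\alpha_{\theta_x\theta_y}$ is constrained independently in $[0,\min(\lambda(x,\theta_x),\lambda(y,\theta_y))]$. This decoupling is what makes the problem tractable: minimising the integral over $\mathcal{A}$ amounts, for $\mathrm{Leb}$-a.e. $(x,y)$, to an elementary linear programme in four scalar variables.

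Concretely, I would start from
\begin{equation*}
\delta\sigma_F^2(\Gamma_\alpha) = \frac{1}{4}\int_{\mathbb{R}^2} \tilde{\alpha}(x,y)\,\tilde{\phi}'(x)\tilde{\phi}'(y)\, e^{-(V(x)+V(y))}\,\mathrm{d}x\mathrm{d}y,
\end{equation*}
and observe that the sign of the weight $w(x,y) := \tilde{\phi}'(x)\tilde{\phi}'(y)$ dictates which way $\tilde{\alpha}(x,y)$ should be pushed. For fixed $(x,y)$ with $w(x,y)>0$, the minimum of $\tilde{\alpha}(x,y)$ subject to the box constraints is attained at $\alpha_{++}=\alpha_{--}=0$ and $\alpha_{+-}=\min(\lambda(x,+1),\lambda(y,-1))$, $\alpha_{-+}=\min(\lambda(x,-1),\lambda(y,+1))$; for $w(x,y)<0$, the maximum of $\tilde{\alpha}(x,y)$ is attained symmetrically at $\alpha_{+-}=\alpha_{-+}=0$ with $\alpha_{++},\alpha_{--}$ set to their upper bounds; for $w(x,y)=0$ the integrand vanishes regardless.

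The main step is to verify that the proposed $\alpha^*$ realises precisely this pointwise extremum. The trick is to recognise that the condition $\tilde{\phi}'(x)\tilde{\phi}'(y)\theta_x\theta_y \le 0$ selects, for each sign configuration of $w$, exactly those $(\theta_x,\theta_y)$-pairs that should be active: when $w(x,y)>0$ the condition forces $\theta_x\theta_y\le 0$, i.e.\ only $(+,-)$ and $(-,+)$, which matches the minimiser above; when $w(x,y)<0$ the condition forces $\theta_x\theta_y\ge 0$, i.e.\ only $(+,+)$ and $(-,-)$, matching the maximiser. Hence the pointwise-optimal configuration coincides with $\alpha^*$ on a full measure set of $(x,y)$.

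Finally, to conclude, I would integrate the pointwise inequality $\tilde{\alpha}^*(x,y)w(x,y) \le \tilde{\alpha}(x,y)w(x,y)$ (valid for every $\alpha \in \mathcal{A}$) against the positive measure $e^{-(V(x)+V(y))}\,\mathrm{d}x\mathrm{d}y$ and divide by $4$. Measurability of $\alpha^*$ needs a brief comment, since it is defined via the sign of $\tilde{\phi}'(x)\tilde{\phi}'(y)\theta_x\theta_y$, but as $\tilde{\phi}$ inherits smoothness from elliptic regularity for \eqref{eq:Zig zag auxiliary Poisson} and $\lambda$ is continuous, measurability is immediate. I do not foresee a serious obstacle — the only mild subtlety is making precise that arbitrary variations of $\alpha$ within $\mathcal{A}$ correspond to arbitrary admissible variations of the four components $\alpha_{\pm\pm}$ at each point, which is clear from the definition of $\mathcal{A}$.
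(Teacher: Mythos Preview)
Your proposal is correct and follows exactly the approach the paper intends: the paper simply states that the result ``is immediate from the expression \eqref{eq:asym var loc zig zag}'', and your argument is a careful unpacking of precisely this pointwise linear optimisation. You have correctly identified the typo in the statement (the condition should read $\tilde{\phi}'(x)\tilde{\phi}'(y)\theta_x\theta_y \le 0$ rather than $\tilde{\phi}'(x)\tilde{\phi}'(x)\theta_x\theta_y \le 0$) and worked with the intended version.
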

\begin{remark}
	The comment from Remark \ref{rem:optimal coupling improves} applies here as well.
\end{remark}
Combining Lemma \ref{lem:1d Poisson} with Proposition \ref{prop:optimal zigzag} immediately yields the following corollaries:
\begin{corollary}
	\label{cor:zigzag monotone}
	In the setting from the first part of Lemma \ref{lem:1d Poisson}, 
	\begin{equation}
	\label{eq:zigzag mirror}
	\alpha^*(x,y,\theta_x,\theta_y) = 
	\begin{cases}
	\min \left(\lambda(x,\theta_x),\lambda(y,\theta_y)\right),\quad & \mbox{if } \theta_x \theta_y \le 0 \\
	0, \quad & \mbox{otherwise,}
	\end{cases}
	\end{equation}
	solves Problem \ref{prob:linearised OT}, in the sense that $\delta\sigma_F^2 (\Gamma_{\alpha^*}) \le \delta\sigma_F^2 (\Gamma_{\alpha})$ for all $\alpha \in \mathcal{A}$.
\end{corollary}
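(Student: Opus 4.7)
The plan is to derive Corollary \ref{cor:zigzag monotone} as a direct specialization of Proposition \ref{prop:optimal zigzag} using the sign information on $\tilde{\phi}'$ supplied by Lemma \ref{lem:1d Poisson}, part (\ref{it:monotone}).

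First I would invoke Lemma \ref{lem:1d Poisson}(\ref{it:monotone}): under the monotonicity assumption on $f$, the derivative $\tilde{\phi}'$ of the solution to \eqref{eq:Zig zag auxiliary Poisson} has constant sign on $\mathbb{R}$. In particular, the product $\tilde{\phi}'(x)\tilde{\phi}'(y)$ is nonnegative for every $(x,y) \in \mathbb{R}^2$. This is the key structural observation that reduces the pointwise optimality condition of Proposition \ref{prop:optimal zigzag} to one involving only the velocity pair $(\theta_x,\theta_y)$.

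Next I would split $\mathbb{R}^2$ into the two regions $R_+ = \{(x,y) : \tilde{\phi}'(x)\tilde{\phi}'(y) > 0\}$ and $R_0 = \{(x,y) : \tilde{\phi}'(x)\tilde{\phi}'(y) = 0\}$. On $R_+$, the sign of $\tilde{\phi}'(x)\tilde{\phi}'(y)\theta_x\theta_y$ equals the sign of $\theta_x\theta_y$, and hence the optimality prescription of Proposition \ref{prop:optimal zigzag} agrees pointwise with the rule in \eqref{eq:zigzag mirror}. On $R_0$, the integrand $\tilde{\alpha}(x,y)\,\tilde{\phi}'(x)\tilde{\phi}'(y)e^{-(V(x)+V(y))}$ in the expression \eqref{eq:asym var loc zig zag} vanishes identically, regardless of how $\alpha$ is specified at such points; thus redefining $\alpha^*$ on $R_0$ according to \eqref{eq:zigzag mirror} does not change the value of $\delta\sigma_F^2(\Gamma_{\alpha^*})$.

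Putting these two observations together, the coupling operator $\Gamma_{\alpha^*}$ induced by the selection in \eqref{eq:zigzag mirror} produces the same value of $\delta\sigma_F^2$ as any optimizer from Proposition \ref{prop:optimal zigzag}, and therefore minimizes $\delta\sigma_F^2$ over $\mathcal{A}$. I do not foresee a substantive obstacle: the only subtlety is the consistent treatment of the null set $R_0$, where the integrand vanishes and the two prescriptions may disagree; this is handled by the observation that values of $\alpha$ on $R_0$ are immaterial to $\delta\sigma_F^2$.
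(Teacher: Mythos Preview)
Your proposal is correct and follows exactly the approach the paper indicates: it combines Lemma \ref{lem:1d Poisson}(\ref{it:monotone}) with Proposition \ref{prop:optimal zigzag}, noting that the constant sign of $\tilde{\phi}'$ collapses the optimality condition to one on $\theta_x\theta_y$ alone. Your explicit handling of the null set $R_0$ is a careful elaboration of what the paper leaves implicit in the word ``immediately.''
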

\begin{corollary}
	\label{cor:zigzag symmetry}
	In the setting from the second part of Lemma \ref{lem:1d Poisson},
	\begin{equation}
	\label{eq:zigzag symmetric}
	\alpha^*(x,y,\theta_x,\theta_y) = 
	\begin{cases}
	\min \left(\lambda(x,\theta_x),\lambda(y,\theta_y)\right),\quad  & \mbox{if } x y \theta_x \theta_y \le 0 \\
	0, \quad & \mbox{otherwise, } \\
	\end{cases}
	\end{equation}
	solves Problem \ref{prob:linearised OT}, in the sense that $\delta\sigma_F^2 (\Gamma_{\alpha^*}) \le \delta\sigma_F^2 (\Gamma_{\alpha})$ for all $\alpha \in \mathcal{A}$.	
\end{corollary}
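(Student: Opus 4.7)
The plan is to derive Corollary \ref{cor:zigzag symmetry} as a direct consequence of Proposition \ref{prop:optimal zigzag} combined with the second part of Lemma \ref{lem:1d Poisson}. The essential observation is that under the symmetry hypotheses on $V$ and $f$, the sign of the product $\tilde{\phi}'(x)\tilde{\phi}'(y)\theta_x\theta_y$ appearing in the minimiser from Proposition \ref{prop:optimal zigzag} can be read off directly from $xy\theta_x\theta_y$.

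More concretely, I would proceed as follows. First, invoke the second part of Lemma \ref{lem:1d Poisson} to conclude that, in the decreasing case, $\tilde{\phi}'(x) \cdot x \le 0$ for all $x \in \mathbb{R}$ (and $\ge 0$ in the increasing case). Consequently, wherever $\tilde{\phi}'(x)$ does not vanish, $\sgn(\tilde{\phi}'(x)) = -\sgn(x)$ (respectively $\sgn(x)$). Multiplying two such identities yields $\sgn(\tilde{\phi}'(x)\tilde{\phi}'(y)) = \sgn(xy)$ in both cases, and therefore
\begin{equation*}
\sgn\bigl(\tilde{\phi}'(x)\tilde{\phi}'(y)\theta_x\theta_y\bigr) = \sgn\bigl(xy\theta_x\theta_y\bigr),
\end{equation*}
whenever $\tilde{\phi}'(x)\tilde{\phi}'(y) \neq 0$. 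On the (possibly large) set where $\tilde{\phi}'(x)\tilde{\phi}'(y) = 0$, the integrand in \eqref{eq:asym var loc zig zag} vanishes pointwise, so the precise choice of $\alpha$ there is irrelevant for the value of $\delta\sigma_F^2(\Gamma_\alpha)$.

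Combining these observations with Proposition \ref{prop:optimal zigzag}, we see that the modified $\alpha^*$ defined in \eqref{eq:zigzag symmetric} differs from the minimiser prescribed by Proposition \ref{prop:optimal zigzag} only on a set where the integrand in the formula \eqref{eq:asym var loc zig zag} for $\delta\sigma_F^2$ is zero. Hence $\delta\sigma_F^2(\Gamma_{\alpha^*}) \le \delta\sigma_F^2(\Gamma_\alpha)$ for every $\alpha \in \mathcal{A}$, which is the claim. I do not anticipate any genuine obstacle; the only small point to be careful about is the treatment of the zero set of $\tilde{\phi}'$, which, as noted, does not contribute to the objective and therefore causes no issue.
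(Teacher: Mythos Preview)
Your proposal is correct and follows exactly the route the paper intends: the paper simply states that combining Lemma~\ref{lem:1d Poisson} with Proposition~\ref{prop:optimal zigzag} ``immediately yields'' the corollary, and you have spelled out precisely this combination. The only minor point worth noting is that your sign identity $\sgn(\tilde{\phi}'(x)) = \pm\sgn(x)$ tacitly uses $\tilde{\phi}'(0)=0$, which indeed holds (it is shown in the proof of Lemma~\ref{lem:1d Poisson}, part~\ref{it:symmetry}, via $\Phi(0)=0$) and is in any case absorbed by your zero-set argument.
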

The results from Corollaries \ref{cor:zigzag monotone} and \ref{cor:zigzag symmetry} can be interpreted intuitively in the following way. As already pointed out in Section \ref{sec:zigzag}, setting $\alpha(x,y,\theta_x,\theta_y) = \min \left(\lambda(x,\theta_x),\lambda(y,\theta_y)\right)$ encourages simultaneous flips of the velocities $\theta_x$ and $\theta_y$ (when the particles are at locations $x$ and $y$, with velocities $\theta_x$ and $\theta_y$), whereas the flips occur independently if $\alpha(x,y,\theta_x,\theta_y) = 0$. The coupling associated to \eqref{eq:zigzag mirror} therefore leads to an increased probability of simultaneous flips precisely when the two particles move in opposite directions. Observe that simultaneous flips preserve the value of $\theta_x \theta_y$, while single flips change its sign. As a consequence, the relative amount of time during which the two particles move in opposite directions is increased by the coupling associated to \eqref{eq:zigzag mirror}. Similarly to the case of mirror coupling for overdamped Langevin diffusions (see the discussion following Corollary \ref{cor:symmetry}), it is plausible that this dynamics leads to cancellations for monotone observables in the spirit of antithetic variates. The interpretation of Corollary \ref{cor:zigzag symmetry} is analogous to the one of Corollary \ref{cor:symmetry}. For illustration, we consider again the case of a quadratic potential $V(x)= \frac{1}{2} x^2$ (i.e. a Gaussian target measure) and a linear observable $f(x)=x$. The coupling is chosen according to Corollary \ref{cor:zigzag monotone}, i.e. in a suitable manner for the linear observable, modulated by a parameter $\beta \in [0,1]$, analogously to \eqref{eq:beta coupling} and \eqref{eq:observable coupling}. In Figures \ref{fig:zigzag variance}, \ref{fig:zigzag rel time}, and \ref{fig:zigzag distance} we plot the associated asymptotic variance, the relative time the particles move in opposite directions, as well as the average distance between the particles. Those graphs support the foregoing intuitive arguments. The fact that the average distance between the particles increases with the strength of the coupling is interesting, since it suggests that the state space can be explored more efficiently by using appropriate couplings. In Figure \ref{fig:zigzag trajectory} we plot a typical trajectory of the joint system. Comparing this graph with the optimal transport map depicted in \ref{fig:lin OT map}, we conclude that the solution to Problem \ref{prob:linearised OT} found in Proposition \ref{prop:optimal zigzag} is somewhat close to the solution of the Kantorovich problem, but not nearly as much as the corresponding solution in the case of overdamped Langevin dynamics. Interestingly, the aforementioned similarity is much more pronounced in the case when the target distribution is heavy-tailed. As an example, we plotted a typical trajectory of a  mirror-coupled zigzag process targeting a Cauchy distribution in Figure \ref{fig:cauchy_trajectory}. We did perform numerical experiments for quadratic observables. For them, an improvement in the asymptotic variance is hardly noticeable. Furthermore, a typical trajectory for the coupling induced by \eqref{eq:zigzag symmetric} very much resembles the typical trajectories for the independent coupling. As it seems, couplings of zigzag processes are not very efficient in the setting of Lemma \ref{lem:1d Poisson}.\ref{it:symmetry}. A possible explanation is that piecewise deterministic Markov processes are more `rigid' than diffusions (in fact, by definition, they move deterministically during a considerable time span), allowing less flexibility in terms of couplings.

\begin{figure}
	\thisfloatpagestyle{empty}
	\captionsetup[subfigure]{justification=centering}
	\caption{Coupling for the zigzag process according to Corollary \ref{cor:zigzag monotone}.}
	\label{fig:zigzag}
	\begin{subfigure}[b]{0.5 \textwidth}
		\includegraphics[width=\textwidth]{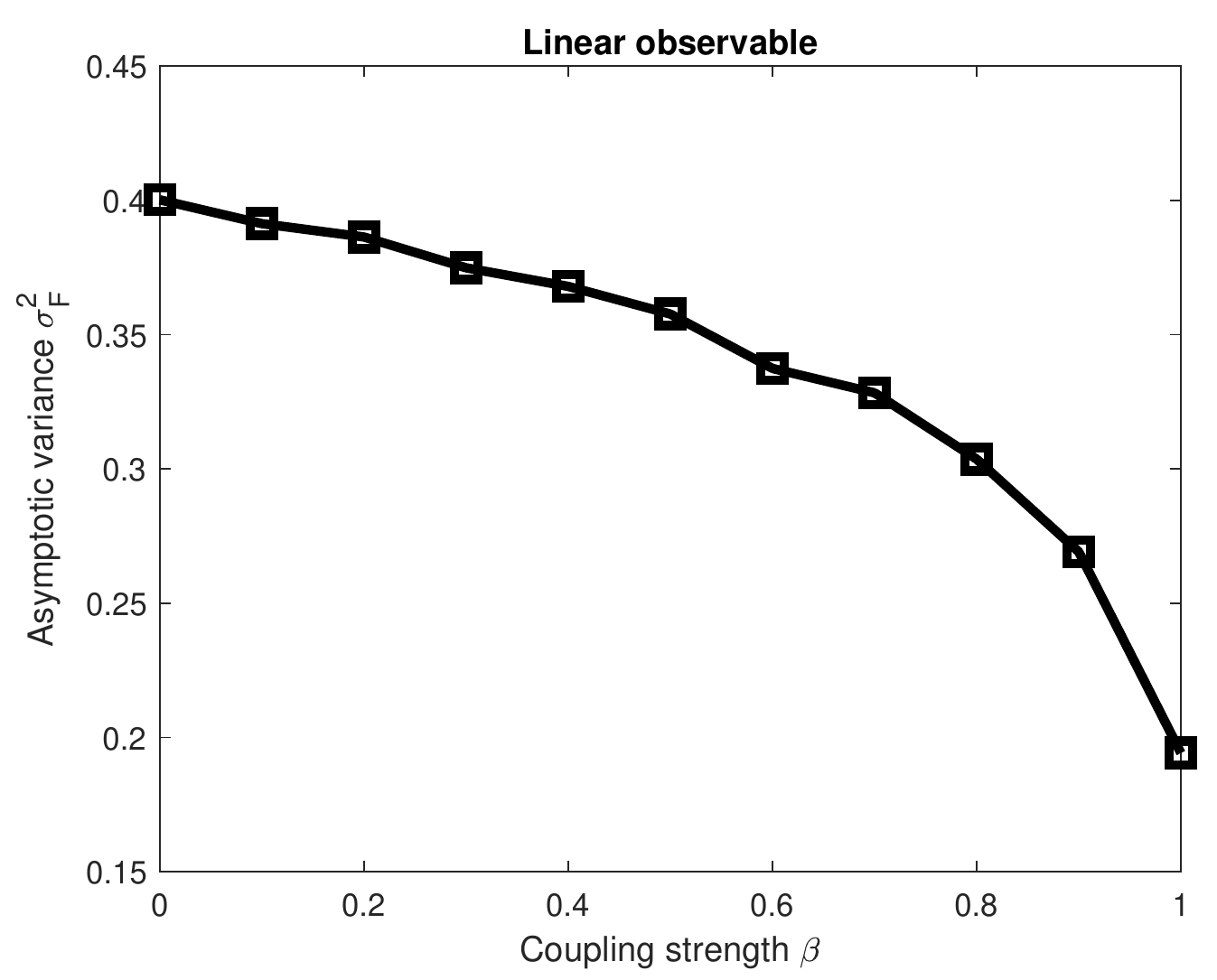}
		\subcaption{Asymptotic variance associated to the linear observable $f(x) = x$, depending on the coupling strength $\beta$.}
		\label{fig:zigzag variance}
	\end{subfigure}
	\begin{subfigure}[b]{0.5 \textwidth}
		\includegraphics[width=\textwidth]{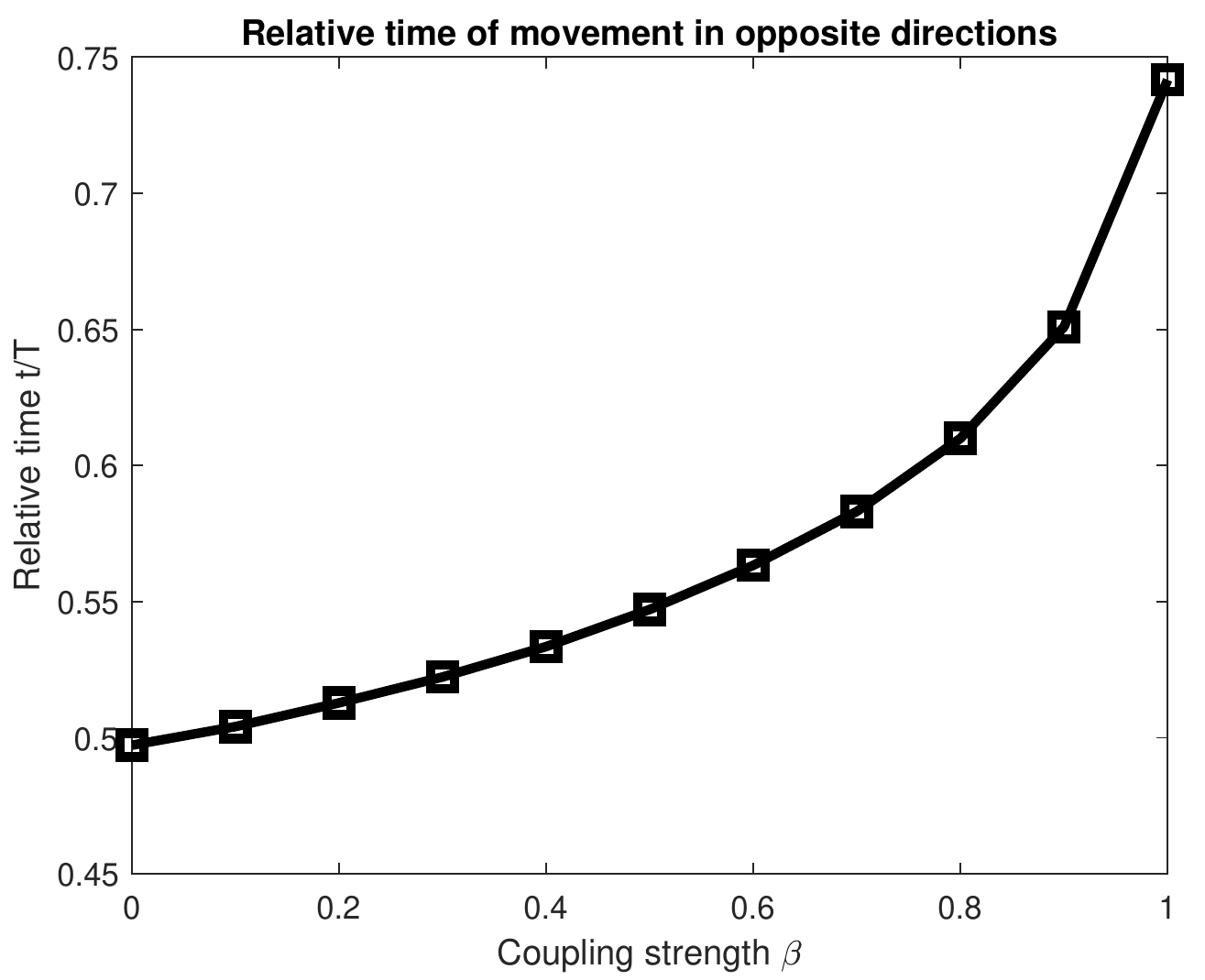}
		\subcaption{Relative amount of time that the particles move in opposite directions, depending on the coupling strength $\beta$.}
		\label{fig:zigzag rel time}
	\end{subfigure}
	\begin{subfigure}[b]{0.5 \textwidth}
		\includegraphics[width=\textwidth]{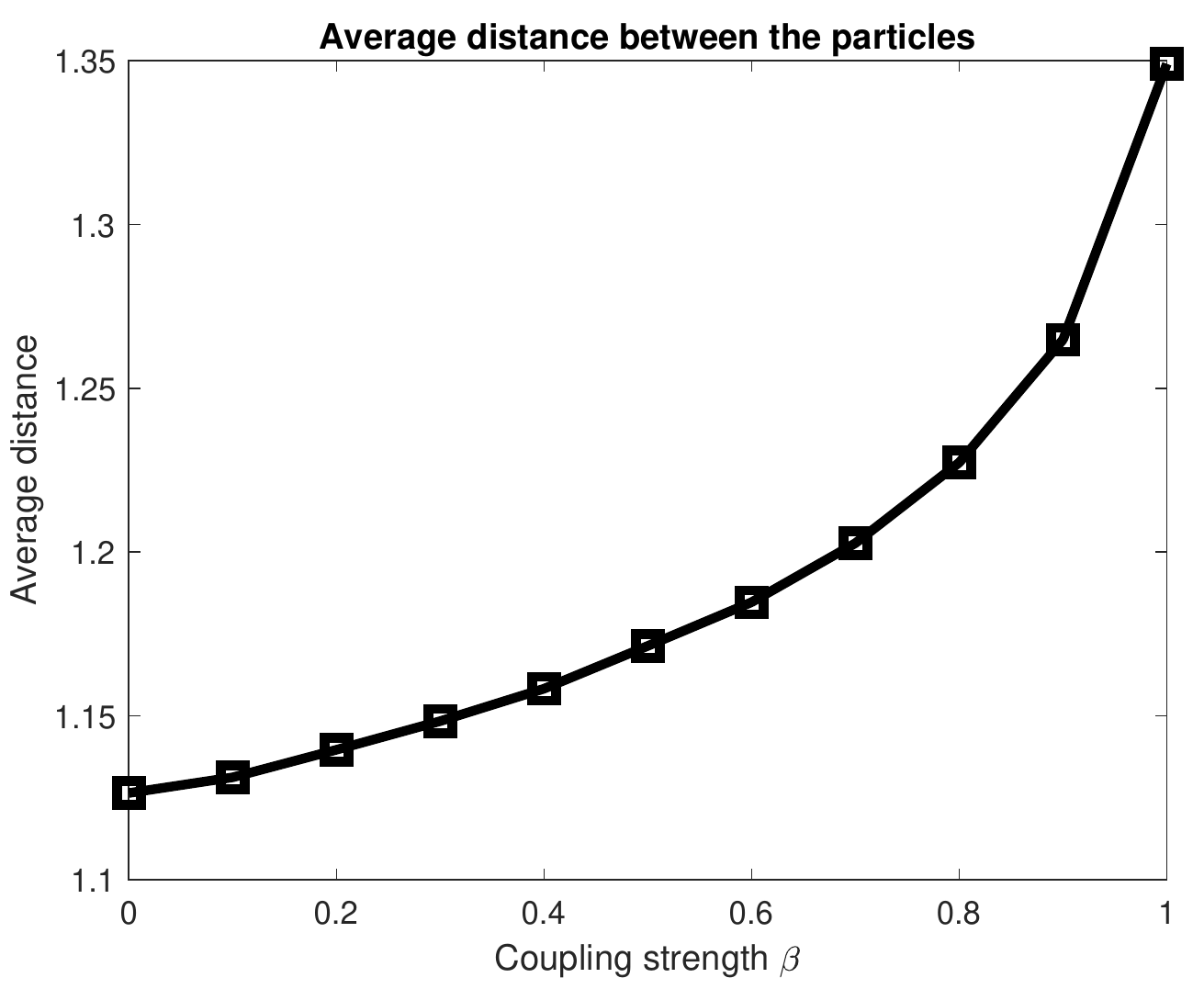}
		\subcaption{Average distance between the two particles, depending on the coupling strength $\beta$.}
		\label{fig:zigzag distance}
	\end{subfigure}
	\begin{subfigure}[b]{0.5 \textwidth}
		\includegraphics[width=\textwidth]{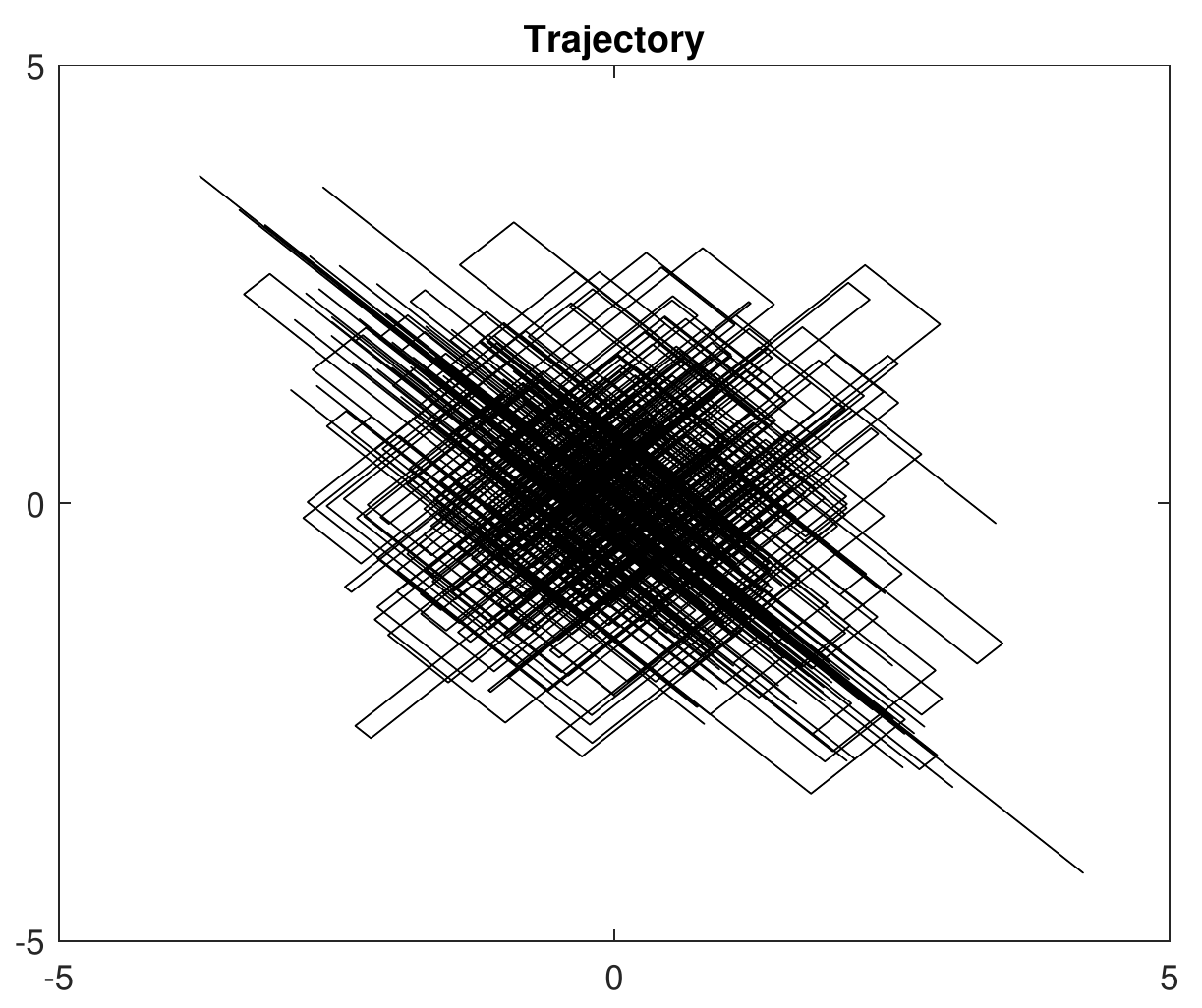}
		\subcaption{Part of the trajectory for a Gaussian target with mirror coupling.}
		\label{fig:zigzag trajectory}
	\end{subfigure}
	\begin{subfigure}[b]{0.5 \textwidth}
		\includegraphics[width=\textwidth]{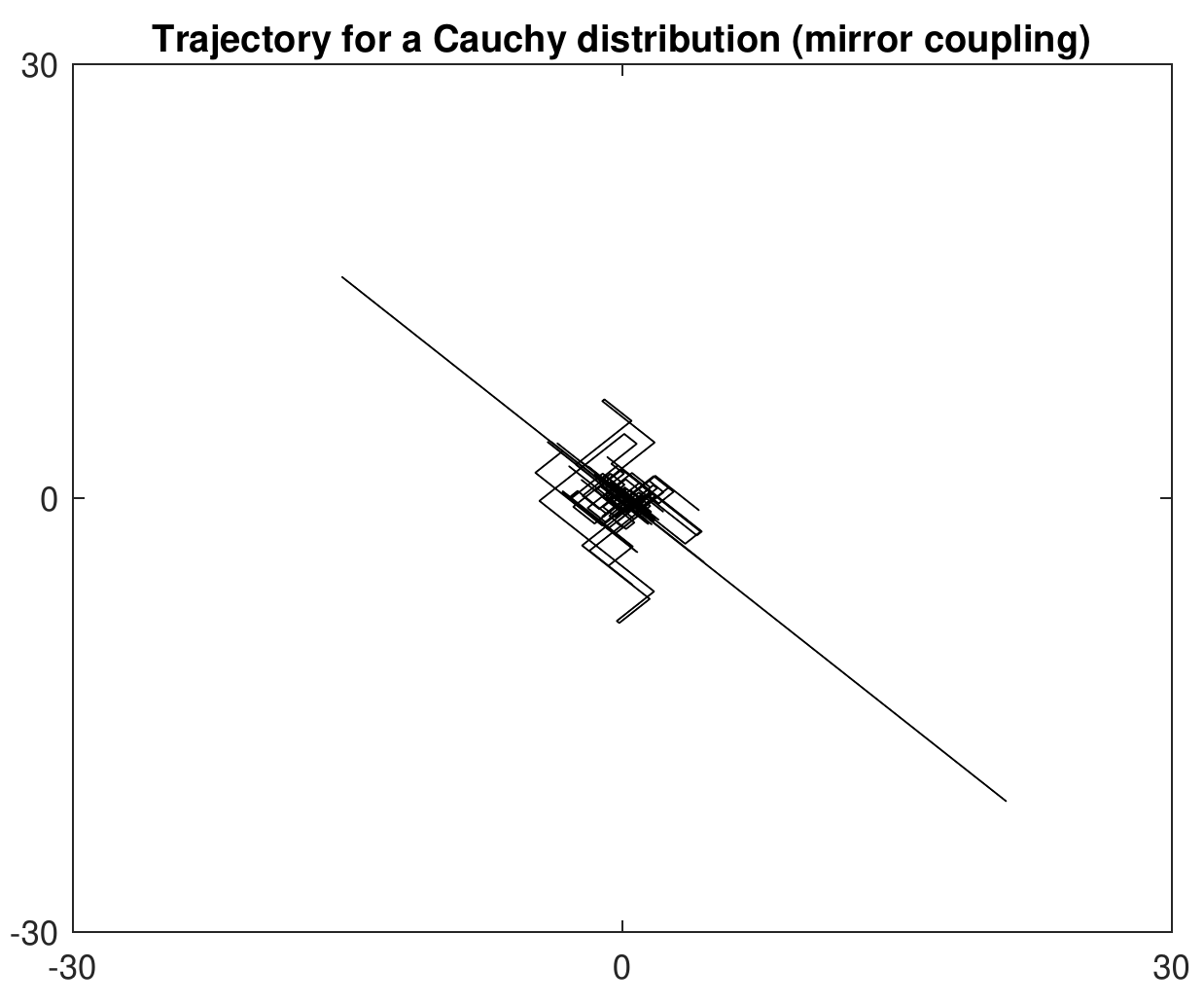}
		\subcaption{Part of the trajectory for a Cauchy target with mirror coupling.}
		\label{fig:cauchy_trajectory}
	\end{subfigure}
\end{figure} 

\section{A remark on the rate of convergence to equilibrium}
\label{ch:coupling_spectral_gap}

In this section, we study the rate of convergence to equilibrium for coupled processes. For convenience, let us assume that the spaces $E_i$ and the operators $\mathcal{L}_i$ are identical copies of each other. When addressing the marginal process(es), we will usually suppress the indices and write $\mathcal{L}$ and $E$. Furthermore, let us fix an ergodic coupling operator $\Gamma \in \mathcal{G}^0$ and denote as usual the corresponding generator and semigroup by $\bar{\mathcal{L}}_\Gamma = \bar{\mathcal{L}}_0 + \Gamma$ and $(\bar{S}^{\Gamma}_t)_{t \ge 0}$ respectively.  In the sequel, we will make use of the following subspace of centred  observables in $L_0^2(\bar{\pi}_\Gamma)$:
\begin{equation}
\tilde{L}_0^2(\bar{\pi}_\Gamma) := \left\{F \in L_0^2(\bar{\pi}_{\Gamma}) \vert\, \text{there exists } f \in L_0^2(\pi) \, \text{such that } F = \frac{1}{n} \sum_{i=1}^n f_i \right\} \subseteq L_0^2(\bar{\pi}_\Gamma).
\end{equation}
Clearly, the space $\tilde{L}_0^2(\bar{\pi}_\Gamma)$ comprises the observables of interest in our framework. By using the extension operator $\Pi^*$ from \eqref{eq:extension operator}, $\tilde{L}_0^2(\bar{\pi}_\Gamma)$ can equivalently be defined via $\tilde{L}_0^2(\bar{\pi}_\Gamma) = \Pi^* L_0^2(\bar{\pi}_\Gamma)$. The main result of this chapter is 
the following characterisation of exponential convergence to equilibrium. 
\begin{theorem}
	\label{thm:Poincare}
	For $\Lambda>0$, the following are equivalent:
	\begin{enumerate}
		\item 
		\emph{Poincar\'{e} inequality:}
		\begin{equation}
		\label{eq:coupling_Poincare inequality}
		\langle F, F \rangle_{L^2(\bar{\pi}_{\Gamma})} \le \frac{1}{\Lambda} \langle F, (-\bar{\mathcal{L}}_{\Gamma})F  \rangle_{L^2(\bar{\pi}_{\Gamma})} 
		\end{equation}
		for all $F \in \mathcal{D}(\bar{\mathcal{L}}_\Gamma)\cap\tilde{L}_0^2(\bar{\pi}_\Gamma)$.
		\item 
		\emph{Exponential decay:}
		\begin{equation}
		\label{eq:exp decay}
		\Vert \bar{S}^{\Gamma}_t F \Vert^2_{L^2(\bar{\pi}_{\Gamma})} \le e^{-2\Lambda t} \Vert F \Vert^2_{L^2(\bar{\pi}_{\Gamma})}, \quad t\ge 0,
		\end{equation}
		for all $F \in \tilde{L}_0^2(\bar{\pi}_\Gamma)$.
	\end{enumerate}
\end{theorem}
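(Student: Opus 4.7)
The plan is to first establish that $\tilde{L}_0^2(\bar{\pi}_\Gamma)$ is invariant under $(\bar{S}_t^\Gamma)_{t\ge 0}$ and then to invoke the standard equivalence between a Poincar\'e inequality and exponential $L^2$-decay of a contraction semigroup on the subspace in question.

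The key preparatory step is the identity
\[
\bar{S}_t^\Gamma f_i = (S_t f)_i \qquad \text{in } L^2(\bar{\pi}_\Gamma), \quad t \ge 0, \; f \in L^2(\pi),
\]
which says that the semigroup acts on one-coordinate functions simply via the marginal dynamics. This follows from the coupling property: starting the coupled process from an arbitrary deterministic $\bar{x}\in\bar{E}$, Proposition \ref{prop:Gamma coupling} combined with uniqueness of the martingale problem for $\mathcal{L}_i$ shows that $(\bar{X}_t^i)_{t\ge 0}$ has the same law as $(X_t)_{t\ge 0}$ started at $x_i$, so $(\bar{S}_t^\Gamma f_i)(\bar{x}) = (S_t f)(x_i)$ for $f \in C_c^\infty(E)$. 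Density of $C_c^\infty(E)$ in $L^2(\pi)$ together with the $L^2$-contractivity of both semigroups then extends the identity to all $f \in L^2(\pi)$. Summing over $i$, the subspace $\tilde{L}_0^2(\bar{\pi}_\Gamma)$ is invariant under $(\bar{S}_t^\Gamma)_{t\ge 0}$.

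For the implication (1) $\Rightarrow$ (2), I would fix $F \in \mathcal{D}(\bar{\mathcal{L}}_\Gamma) \cap \tilde{L}_0^2(\bar{\pi}_\Gamma)$ and set $\phi(t) := \Vert \bar{S}_t^\Gamma F \Vert_{L^2(\bar{\pi}_\Gamma)}^2$. Since $\bar{S}_t^\Gamma F$ remains in $\mathcal{D}(\bar{\mathcal{L}}_\Gamma) \cap \tilde{L}_0^2(\bar{\pi}_\Gamma)$ by the invariance just established, the Poincar\'e inequality \eqref{eq:coupling_Poincare inequality} applies to $\bar{S}_t^\Gamma F$ and yields
\[
\phi'(t) = 2 \langle \bar{S}_t^\Gamma F, \bar{\mathcal{L}}_\Gamma \bar{S}_t^\Gamma F \rangle_{L^2(\bar{\pi}_\Gamma)} \le -2\Lambda\, \phi(t).
\]
Gr\"onwall's lemma delivers \eqref{eq:exp decay} on this dense subspace, and extension to all of $\tilde{L}_0^2(\bar{\pi}_\Gamma)$ follows from $L^2$-contractivity. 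For the converse (2) $\Rightarrow$ (1), observe that for $F \in \mathcal{D}(\bar{\mathcal{L}}_\Gamma) \cap \tilde{L}_0^2(\bar{\pi}_\Gamma)$ the map $t \mapsto e^{2\Lambda t}\Vert \bar{S}_t^\Gamma F \Vert_{L^2(\bar{\pi}_\Gamma)}^2$ attains its maximum at $t=0$ by \eqref{eq:exp decay}, so its right derivative at zero is nonpositive; rearranging this inequality produces precisely \eqref{eq:coupling_Poincare inequality}.

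The main obstacle is the invariance lemma: while it is essentially a consequence of the coupling property, one must carefully transfer the pointwise identity on test functions (available via the martingale problem) into an $L^2$-identity, so that the Gr\"onwall step can be iterated without ever leaving the subspace on which the Poincar\'e inequality is assumed to hold. Once this invariance is in place, the equivalence reduces to the classical argument for strongly continuous contraction semigroups.
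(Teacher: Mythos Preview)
Your proposal is correct and follows essentially the same route as the paper: the paper isolates your invariance step as a separate lemma (Lemma~\ref{lem:flow invariance}), proving $\bar{S}^{\Gamma}_t F = \frac{1}{n}\sum_{i=1}^n (S_t f)_i$ via the coupling property and a density argument, and then runs the standard Gr\"onwall/differentiation-at-zero argument exactly as you outline. Your identification of the invariance of $\tilde{L}_0^2(\bar{\pi}_\Gamma)$ as the one nontrivial ingredient matches the paper's emphasis precisely.
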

\begin{remark}
	Theorem \ref{thm:Poincare} is well known if $\tilde{L}^2_0(\bar{\pi}_\Gamma)$ is replaced by the whole space $L^2_0(\bar{\pi}_\Gamma)$, see \cite[Theorem 4.2.5]{bakry2013analysis}. For our purposes however, it is natural to restrict attention to the smaller space $\tilde{L}^2_0(\bar{\pi}_\Gamma)$. In particular, by the duality $(\Pi \bar{\pi})(f) = \bar{\pi}(\Pi^* f)$ explained in the introduction, the decay estimate \eqref{eq:exp decay} implies exponential convergence of the laws $(\Pi \bar{\pi}_t)_{t \ge 0}$, with the same rate. 
\end{remark}
The proof of Theorem \ref{thm:Poincare} relies on the following lemma:

\begin{lemma}
	\label{lem:flow invariance}
	Let $F \in \tilde{L}_0^2(\bar{\pi}_\Gamma)$ with $F = \frac{1}{n} \sum_{i=1}^n f_i$, $f \in L_0^2(\pi)$. Then 
	\begin{equation}
	\bar{S}^{\Gamma}_t F = \frac{1}{n}\sum_{i=1}^n (S_t f)_i, \quad t \ge 0.
	\end{equation}
	In particular, $\tilde{L}_0^2(\bar{\pi}_\Gamma)$ is invariant under the flow of $(\bar{S}^{\Gamma}_t)_{t \ge 0}$.
\end{lemma}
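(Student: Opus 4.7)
The plan is to reduce the statement to a coordinate-wise identity on $\bar E$ by invoking the marginal property of Feller couplings established in Proposition \ref{prop:Gamma coupling}, and then to finish by linearity together with a standard density argument. More precisely, the key claim to establish is that for every $f \in L_0^2(\pi)$ and every $i \in \{1,\ldots,n\}$ one has
\begin{equation*}
\bar S_t^\Gamma f_i = (S_t f)_i, \quad t \ge 0,
\end{equation*}
as elements of $L^2(\bar\pi_\Gamma)$; granted this, the conclusion follows at once by linearity of $\bar S_t^\Gamma$ applied to $F = \frac{1}{n}\sum_{i=1}^n f_i$.

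To prove the coordinate-wise identity, I would first argue pointwise for a dense class of test functions, e.g.\ $f \in C_c^\infty(E)$. Writing $(\bar S_t^\Gamma f_i)(\bar x) = \mathbb{E}_{\bar x}[f(\bar X_t^i)]$ and using the coupling property, namely that under $\mathbb{P}_{\bar x}$ the marginal process $(\bar X_t^i)_{t \ge 0}$ solves the martingale problem for $\mathcal{L}_i$ with initial condition $x_i$ (this is exactly what is shown in the first half of the proof of Proposition \ref{prop:Gamma coupling}), uniqueness of the martingale problem for $\mathcal{L}_i$ yields $\mathbb{E}_{\bar x}[f(\bar X_t^i)] = \mathbb{E}_{x_i}[f(X_t^i)] = (S_t f)(x_i)$, which is exactly $(S_t f)_i(\bar x)$. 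To pass from $f \in C_c^\infty(E)$ to general $f \in L_0^2(\pi)$, I would use that the map $g \mapsto g_i$ is an isometry from $L^2(\pi)$ into $L^2(\bar\pi_\Gamma)$ (since $\bar\pi_\Gamma$ has marginal $\pi$ in the $i$-th coordinate), that $(S_t)$ and $(\bar S_t^\Gamma)$ are contractions on the respective $L^2$ spaces, and that $C_c^\infty(E)$ is dense in $L^2(\pi)$.

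For the invariance statement, it remains only to verify that $S_t f \in L_0^2(\pi)$ whenever $f \in L_0^2(\pi)$. The $L^2$-bound is the contractivity of $(S_t)$ on $L^2(\pi)$, and the centring $\pi(S_t f) = \pi(f) = 0$ follows from the invariance of $\pi$ under the one-particle semigroup. The main obstacle, mild as it is, lies in the density argument bridging the pointwise identity on smooth compactly supported functions with the $L^2$-extension; everything else is a direct consequence of Proposition \ref{prop:Gamma coupling} and linearity.
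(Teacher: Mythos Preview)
Your proposal is correct and follows essentially the same approach as the paper: both establish the identity first for a dense class of test functions by using the coupling property (the marginal law of $\bar X_t^i$ under $\mathbb{P}_{\bar x}$ coincides with that of $X_t^i$ started at $x_i$), and then extend to all of $\tilde L_0^2(\bar\pi_\Gamma)$ by a density argument. The only cosmetic differences are that the paper works directly with the sum $F$ on $C_b(\bar E)$ rather than coordinate-wise on $C_c^\infty(E)$, and that you spell out the density step and the invariance check $S_t f \in L_0^2(\pi)$ in slightly more detail.
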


\begin{proof}
	For $F \in C_b(\bar{E}) \cap \tilde{L}_0^2(\bar{\pi}_\Gamma)$ we have
	\begin{subequations}
		\begin{align*}
		(\bar{S}^\Gamma_t F) (x_1, \ldots, x_n) 
		&  =  \mathbb{E} [F(\bar{X}_t) \vert \bar{X}_0 = (x_1,\ldots, x_n)] \\
		& =  \frac{1}{n} \sum_{i=1}^n \mathbb{E} [f(X_t^i) \vert \bar{X}_0 = (x_1,\ldots, x_n)] \\
		& = \frac{1}{n} \sum_{i=1}^n \mathbb{E} [f(X_t^i) \vert X^i_0 = x_i] = \frac{1}{n} \sum_{i=1}^n (S_t  f) (x_i).
		\end{align*}
	\end{subequations}
	Between the second and the third line, we used the fact that the process $(\bar{X}_t)_{t \ge 0}$ has $(X_t^i)_{t \ge 0}$ as its $i$th marginal, so in particular, the law of $f(X_t^i)$ depends on the initial condition $\bar{X}_0$ only through $X_0^i = x_i$. For arbitrary $F \in \tilde{L}_0^2(\bar{\pi}_\Gamma)$ the result follows by a standard density argument.
\end{proof}
\begin{proof}
	[Proof of Theorem \ref{thm:Poincare}] The proof is verbatim the same as for the usual result. However, the fact that $\tilde{L}_0^2(\bar{\pi}_\Gamma)$ is invariant under the flow $(\bar{S}^{\Gamma}_t)_{t \ge 0}$ is crucial.
	For completeness let us sketch the proof: Let $F \in \tilde{L}_0^2(\bar{\pi}_\Gamma)$ and assume that the Poincar\'{e} inequality \eqref{eq:coupling_Poincare inequality} holds for some constant $\Lambda > 0$. Then
	\begin{equation}
	\frac{\mathrm{d}}{\mathrm{d}t} \left( \frac{1}{2} \Vert \bar{S}^\Gamma_t F \Vert^2_{L^2(\bar{\pi}_{\Gamma})} \right) = \langle \bar{S}^\Gamma_t F, \bar{\mathcal{L}}_{\Gamma} \bar{S}^\Gamma_t F \rangle_{L^2(\bar{\pi}_{\Gamma})} \le - \Lambda \langle \bar{S}^\Gamma_t F, \bar{S}^\Gamma_t F \rangle_{L^2(\bar{\pi}_{\Gamma})},
	\end{equation}
	where the last inequality uses the fact that $\bar{S}^\Gamma_t f \in \tilde{L}_0^2(\bar{\pi}_\Gamma)$ according to Lemma \ref{lem:flow invariance}. Exponential decay as in \eqref{eq:exp decay} follows by Gronwall's Lemma. The converse direction follows by performing a Taylor expansion of the decay estimate \eqref{eq:exp decay} around $t=0$.
\end{proof}
To explain the significance of Theorem \ref{thm:Poincare}, let us start by writing \eqref{eq:coupling_Poincare inequality} in the form
\begin{equation}
\label{eq:Poincare plus coupling}
\langle f, f \rangle_{L^2(\pi)} + \frac{1}{n} \sum_{\substack{i,j = 1 \\ i \neq j}}^{n} \langle f_i, f_j \rangle_{L^2(\bar{\pi}_\Gamma)} \le \frac{1}{\Lambda} \left( \langle f, (-\mathcal{L}) f \rangle_{L^2(\pi)}  + \frac{1}{n} \sum_{\substack{i,j = 1 \\ i \neq j}}^{n} \langle f_i, (-\mathcal{L}_j) f_j \rangle_{L^2(\bar{\pi}_\Gamma)} \right),
\end{equation}
using the marginal property of $\bar{\pi}_\Gamma$. Clearly, \eqref{eq:Poincare plus coupling} deviates from the usual one-particle Poincar\'{e} inequality by the additional terms involving summation over pairs of particles. To make this more precise and analyse the impact of these terms, let us define the following bilinear form on $L_0^2(\pi)$:
\begin{equation}
\llangle f , g \rrangle := \langle f, g \rangle_{L^2(\pi)} + \frac{1}{n} \sum_{\substack{i,j = 1 \\ i \neq j}}^{n} \langle f_i, g_j \rangle_{L^2(\bar{\pi}_\Gamma)}, \quad f,g \in L_0^2(\pi). 
\end{equation}
For $F = \sum_{i=1}^n f_i$ and $G = \sum_{i=1}^n g_i$ we have that $\langle F, G \rangle_{L^2(\bar{\pi}_\Gamma)} = n\llangle f, g \rrangle$. Hence, $\llangle \cdot , \cdot \rrangle$ is both symmetric and nonnegative definite, but  $\llangle f, f \rrangle = 0$ is possible for $f \neq 0$. It is therefore natural to define the equivalence relation
\begin{equation}
f \sim g :\iff \llangle f - g, f - g \rrangle = 0, \quad f,g \in L_0^2(\pi),
\end{equation}
and the corresponding Hilbert space 
\begin{equation}
\mathcal{H} := L_0^2(\pi) / \sim.
\end{equation}
Using again the correspondence $F = \sum_{i=1}^n f_i$, we see that $\llangle f, f \rrangle = 0$ if and only if $F=0$ $\bar{\pi}_\Gamma$-almost surely. By ergodicity, this is also equivalent to $\bar{\mathcal{L}}_\Gamma F = 0$, $\bar{\pi}_\Gamma$-almost surely. We hence see that $\mathcal{L}$ respects $\sim$-equivalence classes, i.e. $f \sim g$ if and only if $\mathcal{L}f \sim \mathcal{L}g$. Denoting the induced operator on $\mathcal{H}$ by $\mathcal{L}_{\mathcal{H}}$, it is then immediate each of \eqref{eq:coupling_Poincare inequality} and \eqref{eq:Poincare plus coupling} is equivalent to
\begin{equation}
\label{eq:coupled Poincare}
\llangle f, f \rrangle \le \frac{1}{\Lambda} \llangle f, (-\mathcal{L}_{\mathcal{H}}) f \rrangle, \quad f \in \mathcal{H} \cap \mathcal{D}(\mathcal{L}).  
\end{equation}
By its similarity to the one-particle Poincar{\'e} inequality, the formulation \eqref{eq:coupled Poincare} is convenient for the comparison between the spectral gaps of the underlying and the coupled dynamics. 

Let us assume from now on that $\mathcal{L}$ is self-adjoint in $L_0^2(\pi)$ with discrete spectrum, with
\begin{equation}
\label{eq:coupling_spectrum}
-\mathcal{L} e_i = \mu_i e_i,\quad 0 <\mu_1 \le \mu_2 \le ,\ldots
\end{equation} 
where the eigenvectors $(e_i)_{i\in \mathbb{N}}$ form an orthonormal basis in $L_0^2(\pi)$. The optimal constant in the one-particle Poincar{\'e} inequality is then clearly given by $\lambda = \mu_1$. In the study of the coupled Poincar{\'e} inequality \eqref{eq:coupled Poincare}, two interesting effects might occur. Firstly, the spectrum of $\mathcal{L}_{\mathcal{H}}$ might be different from the spectrum of $\mathcal{L}$. Secondly, $\mathcal{L}_{\mathcal{H}}$ might not be symmetric with respect to $\llangle \cdot, \cdot \rrangle$. Let us start with the first point. Clearly, $\sigma(\mathcal{L}_\mathcal{H}) \subseteq  \sigma(\mathcal{L})$, more precisely
\begin{equation}
\label{eq:spectrum L_H}
\sigma(\mathcal{L}_\mathcal{H}) = \left\{ \lambda_i : \; \llangle e_i,e_i \rrangle \neq 0\right\}.
\end{equation}

\begin{example}
	\label{ex:faster}
	Consider the dynamics
	\begin{subequations}
		\label{eq:spectral gap coupled system}
		\begin{align}
		\mathrm{d}X_t & = - \nabla V(X_t) \,\mathrm{d}t + \sqrt{2}\,\mathrm{d}B_t, \\
		\mathrm{d}Y_t & = - \nabla V(Y_t) \,\mathrm{d}t - \sqrt{2}\,\mathrm{d}B_t,
		\end{align}
	\end{subequations}
	with a standard $\mathbb{R}^d$-valued Brownian motion $(B_t)_{t \ge 0}$. Let us assume that the potential $V$ grows sufficiently fast at infinity such that the one-particle generator $\mathcal{L}$ has compact resolvent and hence discrete spectrum in $L_0^2(\pi)$. Furthermore, suppose that the eigenvalues and eigenfunctions are labelled and ordered as in \eqref{eq:coupling_spectrum}. Let us now assume that $V$ is even, i.e $V(x)= V(-x)$, and that the process is ergodic. The invariant measure is then given by 
	\begin{equation}
	\bar{\pi}_\Gamma (\mathrm{d}x \mathrm{d}y) = \frac{1}{Z}e^{-V(x)} \delta_{x+y} \, \mathrm{d}x \mathrm{d}y,
	\end{equation}
	and the corresponding new (degenerate) scalar product in $L_0^2(\pi)$ turns out to be
	\begin{subequations}
		\begin{align}
		\llangle f, g \rrangle &  = \frac{1}{Z}\int_{\mathbb{R}^d} f(x)g(x) e^{-V(x)} \, \mathrm{d}x \\
		& + \frac{1}{2Z} \left( \int_{\mathbb{R}^d} f(x)g(-x) e^{-V(x)} \mathrm{d}x + \int_{\mathbb{R}^d} f(-x)g(x) e^{-V(x)} \mathrm{d}x\right).
		\end{align}
	\end{subequations}
	Notice that by the symmetry of $V$, all the eigenfunctions of $\mathcal{L}$ are either even or odd. Moreover, a short calculation shows that $\llangle f, f \rrangle = 0$ if and only if $f$ is odd (meaning that $-f(x)= f(-x)$). Using \eqref{eq:spectrum L_H}, we see that 
	\begin{equation}
	\sigma(\mathcal{L}_\mathcal{H}) = \left\{ \lambda_i : \; e_i \; \text{is odd} \right\}.
	\end{equation}
	Another short calculation shows that $\mathcal{L}_{\mathcal{H}}$ is symmetric with respect to $\llangle \cdot, \cdot \rrangle$, i.e.
	\begin{equation}
	\llangle f, \mathcal{L}_{\mathcal{H}} g\rrangle = \llangle \mathcal{L}_{\mathcal{H}}f,  g\rrangle, \quad f \in \mathcal{H} \cap \mathcal{D}(\mathcal{L}).
	\end{equation}
	
	If the first eigenfunction $e_1$ is odd\footnote{In one dimension, it can be proved that the first eigenfunction is always odd by appealing to the node theorem for Schr{\"o}dinger operators in Sturm-Liouville theory \cite[Chapter 9]{Teschl2014}. We conjecture that this fact might also be true in higher dimensions, but are not able to give a proof or a reference. Our special thanks go to Sabine B{\"o}gli and Ari Laptev for discussing this question with us.}, it therefore follows that the coupled Poincar{\'e} inequality \eqref{eq:Poincare plus coupling} holds with the constant $\Lambda = \mu_2$, showing an improved rate of convergence for the coupled dynamics. 
\end{example}
\begin{example}
	\label{ex:slower}
	Let us examine the second point, i.e. the possibility of $\mathcal{L}_{\mathcal{H}}$ not being symmetric with respect to $\llangle, \cdot , \cdot \rrangle$. For simplicity, assume that $\sigma(\mathcal{L}_\mathcal{H}) = \sigma(\mathcal{L})$, i.e. $\llangle e_i,e_i \rrangle \neq 0$ for all $i \in \mathbb{N}$. Consider the case when the measures $\bar{\pi}_0$ and $\bar{\pi}_\Gamma$ have densities with respect to a common dominating measure $m$ (for convenience denoted by the same symbols), and suppose there exist constants $c_1, c_2 > 0$ such that
	\begin{equation}
	\label{eq:upper lower density}
	c_1 \bar{\pi}_0(x) \le \bar{\pi}_\Gamma(x) \le c_2 \bar{\pi}_0(x), \quad x \in E.
	\end{equation}
	This is the case precisely when the norms in $L^2(\bar{\pi}_0)$ and $L^2(\bar{\pi}_\Gamma)$ are equivalent. For $F = \sum_{i=1}^n f_i$, we have that $n \llangle f, f \rrangle = \langle F, F \rangle_{L^2(\bar{\pi}_\Gamma)}$ as well as  $n \langle f, f \rangle_{L^2(\pi)} = \langle F, F \rangle_{L^2(\bar{\pi}_0)}$. Using \eqref{eq:upper lower density}, we hence conclude that
	\begin{equation}
	\label{eq:coupling_equivalence}
	c_1 \langle f, f \rangle_{L^2(\pi)} \le \llangle f, f \rrangle \le c_2 \langle f, f \rangle_{L^2(\pi)}, \quad f \in L^2(\pi).
	\end{equation}
	By assumption, the marginal process satisfies a Poincar{\'e} inequality as well as the equivalent decay estimate
	\begin{equation}
	\label{eq:one p decay}
	\Vert S_t f \Vert_{L^2(\pi)}^2 \le e^{-2 \lambda t} \Vert f \Vert^2_{L^2(\pi)}, \quad f \in L_0^2(\pi),
	\end{equation}
	with $\lambda = \mu_1$. By the equivalence \eqref{eq:coupling_equivalence}, we conclude that 
	\begin{equation}
	\label{eq:exp C decay}
	\Vert \bar{S}^{\Gamma}_t F \Vert^2_{L^2(\bar{\pi}_{\Gamma})} \le C e^{-2\lambda t} \Vert F \Vert^2_{L^2(\bar{\pi}_{\Gamma})}, \quad t\ge 0, \quad F \in \tilde{L}_0^2(\bar{\pi}_\Gamma),
	\end{equation}
	with $C = \frac{c_2}{c_1}$. Comparing \eqref{eq:one p decay} and \eqref{eq:exp C decay}, we see that the coupled process achieves the same exponential rate of convergence as the one-particles processes, but possibly with a worse constant $C$ in front of the exponential. The latter can be characterised in terms of the equivalence estimate \eqref{eq:upper lower density}. 
\end{example}

Conclusively, the speed of convergence to equilibrium can be both faster (as in Example \ref{ex:faster}) and slower (as in Example \ref{ex:slower}) for coupled processes, in comparison with the underlying one-particle processes. We leave a more thorough investigation of the Poincar{\'e} inequality \eqref{eq:Poincare plus coupling} for future work.

\section{Outlook and future work}

In this paper we have introduced a general framework for the construction and analysis of coupled MCMC samplers. Formulating the results in an abstract setting has allowed us to address both (possibly degenerate) diffusion processes as well as piecewise deterministic Markov processes, emphasising common structural properties. The analysis of appropriate central limit theorems has exposed notable connections to the theory of optimal transportation. We showed that the ensuing optimisation problem has singularity properties akin to those appearing in the usual Kantorovich formulation. We then studied a surrogate problem, leading to novel coupling strategies that seem promising for applications. Finally, we derived a functional inequality of Poincar{\'e} type suitable for the study of the exponential convergence to equilibrium for coupled processes. 

Our work can be extended in several directions. On the theoretical side, proving or disproving the Conjectures \ref{conj:G convex}, \ref{conj:G0 convex} and \ref{conj:general construction} would further illuminate the structural properties of the developed theory. Moreover, establishing a more rigorous connection between the optimal transport problems \ref{prob:OT C0} and \ref{prob:linearised OT} with the usual Kantorovich formulation might lead to further developments bridging the theories of Markov processes and optimal transportation. 

In terms of applications in sampling, a more detailed study of the couplings between many particles is needed, a starting point being the results in Section \ref{sec:many particles}. Furthermore, it would be desirable to relax our assumption that the laws of the marginal processes remain unchanged, as this would allow for more pronounced interactions between the particles. In this regard, the inclusion of the methodology put forward in \cite{LMW2018} in our framework would be of particular interest for practitioners.

In the broader context of statistical computation, it seems that coupling approaches along the lines developed here could be fruitfully applied in the context of the calculation of transport coefficients and sensitivities \cite{assaraf2015computation,hall2016uncertainty}. More speculatively, it would be interesting to investigate the use of our ideas in the context of multilevel Monte Carlo \cite{Giles2015} or computational optimal transport \cite{peyre2017computational}. We leave these directions for future investigations.

\subsection*{Acknowledgements}
NN is supported by the EPSRC through a Roth Departmental Scholarship. GP is supported by the EPSRC under grants No. EP/P031587/1, EP/L024926/1 and EP/L020564/1. The authors would like to thank Pedro Aceves Sanchez, Andreas Eberle, Julien Roussel, Gabriel Stoltz and Urbain Vaes for stimulating discussions.

\appendix
\section{Random orthogonal transformations of Brownian motions}
\label{sec:orth transform}
The following lemma has been extracted from \cite[page 56]{Eberle2015}, see also \cite[Theorem 8.4.2]{Oks2003}. This result states that the set of Brownian motions is preserved under possibly time-dependent linear transformations possessing certain orthogonality properties.  Importantly, no regularity constraints with regard to the time-dependence are required beyond measurability. This fact is crucial in the proofs of Lemmas \ref{lem:1d BMs} and \ref{lem:overdamped BM}. 
\begin{lemma}[Random orthogonal transformations]
	\label{lem:random orthogonal transformations}
	Suppose that the $\mathbb{R}^N$-valued stochastic process $(X_t)_{t \ge 0}$ is a solution to the SDE
	\begin{equation}
	\mathrm{d} X_t = O_t \mathrm{d} W_t, \quad X_0 = x_0,
	\end{equation}
	where $(W_t)_{t \ge 0}$ is an $M$-dimensional standard Brownian motion generating the filtration $(\mathcal{F}_t)_{t \ge 0}$, and $(O_t)_{t \ge 0}$ is a product-measurable $(\mathcal{F}_t)_{t \ge 0}$-adapted process taking values in $\mathbb{R}^{N \times M}$. Assume furthermore that
	\begin{equation}
	\label{eq:orthogonality condition}
	O_t O_t^T = I_{N \times N}
	\end{equation}
	for all $t \ge 0$, almost surely. Then $(X_t)_{t \ge 0}$ is an $N$-dimensional standard Brownian motion. 
\end{lemma}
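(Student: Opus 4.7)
The plan is to invoke Lévy's characterization of Brownian motion, which states that a continuous $\mathbb{R}^N$-valued local martingale $(M_t)_{t \ge 0}$ starting at the origin whose quadratic covariation satisfies $\langle M^i, M^j\rangle_t = \delta_{ij} t$ is a standard $N$-dimensional Brownian motion. The proof therefore reduces to verifying that $(X_t - x_0)_{t \ge 0}$ is continuous, is a local martingale, and has the correct quadratic covariation structure, so the main work is just the computation of $\langle X^i, X^j\rangle_t$ using the orthogonality hypothesis \eqref{eq:orthogonality condition}.

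First I would check that the stochastic integral on the right-hand side of the defining SDE is well-defined. Since $(O_t)_{t \ge 0}$ is product-measurable, $(\mathcal{F}_t)_{t \ge 0}$-adapted, and pointwise satisfies $O_t O_t^T = I_{N\times N}$, each entry $(O_t)_{ik}$ is bounded by $1$ almost surely, hence lies in $L^2_{\mathrm{loc}}([0,\infty))$ and the Itô integral $\int_0^t O_s \, dW_s$ exists componentwise. In particular, each component $(X^i_t - x_0^i)_{t \ge 0}$ is a continuous $(\mathcal{F}_t)_{t \ge 0}$-martingale (not merely a local martingale, since the integrand is uniformly bounded).

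Next I would compute the quadratic covariation. For $i,j \in \{1,\ldots,N\}$, standard Itô calculus gives
\begin{equation}
\langle X^i, X^j\rangle_t = \sum_{k=1}^{M} \int_0^t (O_s)_{ik} (O_s)_{jk}\, ds = \int_0^t (O_s O_s^T)_{ij}\, ds = \delta_{ij}\, t,
\end{equation}
using the hypothesis \eqref{eq:orthogonality condition} in the last equality. This is the key step where the orthogonality assumption is used; note that only measurability of $t \mapsto O_t$ is required for this computation to go through, since the stochastic integral and its bracket are defined in the $L^2$ sense.

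With the three hypotheses of Lévy's characterization (continuity, local martingale property, and $\langle X^i - x_0^i, X^j - x_0^j\rangle_t = \delta_{ij} t$) verified, one concludes that $(X_t - x_0)_{t \ge 0}$ is a standard $N$-dimensional Brownian motion, which is equivalent to the claim. I do not anticipate any substantive obstacle here: the only point demanding slight care is to emphasize that no regularity of $t \mapsto O_t$ beyond joint measurability is needed, because the Itô integral and its quadratic variation are insensitive to modifications on null sets in time. This is precisely the point that makes the lemma applicable in the proofs of Lemmas \ref{lem:1d BMs} and \ref{lem:overdamped BM}, where the coupling function $\alpha$ (and hence $G$) is only assumed measurable.
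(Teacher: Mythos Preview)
Your argument via L\'evy's characterization is correct and is exactly the standard route; the paper does not actually give its own proof of this lemma but merely cites Eberle's lecture notes and \O{}ksendal's textbook, both of which prove it in precisely this way. Your emphasis that only measurability of $t \mapsto O_t$ is needed is the salient point the paper wishes to extract, and you have handled it correctly.
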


\section{The derivative formula for invariant measures}
\label{sec:proofs_OT}
Here, we provide the proof of the derivative formula \eqref{eq:derivative} that allows us to compute the change of the invariant measure under an infinitesimal change of the coupling.
\begin{proof}[Proof of Proposition \ref{prop:derivative}]
	The idea of the proof stems from \cite{LMS2016} in the context of invariant measures for discretised SDEs and was also advertised in \cite[Remark 5.5]{LS2016}. 
	
	For convenience, let us first introduce the notation $g=-(\bar{\mathcal{L}}_{\Gamma}^*)^{-1} \mathrm{d}\Gamma^* \mathbf{1}$. Furthermore, we will make use of the projection operators 
	\begin{equation}
	\label{eq:projections}
	\Pi \phi = \int_{\bar{E}} \phi \, \mathrm{d}\bar{\pi}_{\Gamma}^0, \quad \Pi^{\perp} \phi = \phi - \Pi \phi,
	\end{equation}
	acting on $L^2(\bar{\pi}_\Gamma)$. 	Using $c = (\Pi + \Pi^{\perp})c$ in \eqref{eq:derivative}, we see that \eqref{eq:derivative} is equivalent to 
	\begin{equation}
	\frac{\mathrm{d}}{\mathrm{d}\varepsilon} \bigg\rvert_{\varepsilon = 0} \int_{\bar{E}} \Pi^{\perp}c \, \left(\mathrm{d}\bar{\pi}^{\varepsilon}_{\Gamma} \right) = - \int_{\bar{E}} \left(\Pi^{\perp}c\right) \left[ \bar{\mathcal{L}}_{\Gamma} ^*\right]^{-1} (\mathrm{d}\Gamma^* 1) \,\mathrm{d}\bar{\pi}_{\Gamma}^{0}.
	\end{equation}
	We may thus without loss of generality assume that $\Pi^{\perp}c = c$ (i.e. $\Pi c =0$), and will do so in the following. Furthermore, let us also assume that $c \in C^{\infty}(\bar{E})$ such that the calculations in the sequel are justified. The general case then follows by a standard approximation argument.
	A short calculation (using the fact that $\bar{\pi}_{\Gamma}^0(\bar{\mathcal{L}}_{\Gamma}c) = 0$)  shows that
	\begin{equation}
	\int_{\bar{E}}\bar{\mathcal{L}}_{\Gamma}^{\varepsilon} c \cdot (1+\varepsilon g) \,\mathrm{d}\bar{\pi}_{\Gamma}^0 = \varepsilon^2 \int_{\bar{E}} (\mathrm{d}\Gamma c) g\, \mathrm{d}\bar{\pi}_{\Gamma}^0 = \mathcal{O}(\varepsilon^2).
	\end{equation}
	Inserting $\Pi + \Pi^{\perp} = I$, we see that the above is equivalent to
	\begin{equation}
	\label{eq:pi_0_pert}
	\int_{\bar{E}} \Pi^{\perp} \bar{\mathcal{L}}_{\Gamma}^{\varepsilon} \Pi^{\perp} c \cdot (1 + \varepsilon g) \, \mathrm{d}\bar{\pi}_{\Gamma}^0 + \varepsilon \int_{\bar{E}} \Pi \mathrm{d} \Gamma c \cdot (1 + \varepsilon g) \, \mathrm{d}\bar{\pi}_0 = \mathcal{O}(\varepsilon^2).
	\end{equation} 
	At the same time, we have that
	\begin{equation}
	\label{eq:pi_eps_null}
	\int_{\bar{E}} \bar{\mathcal{L}}_{\Gamma}^{\varepsilon} c \,\mathrm{d}\bar{\pi}_{\Gamma}^{\varepsilon} = 0.
	\end{equation}
	Using again $\Pi + \Pi^{\perp} = I$ and $\Pi \bar{\mathcal{L}}_{\Gamma}=0$, \eqref{eq:pi_eps_null} can be expressed as
	\begin{equation}
	\label{eq:pi_eps_pert}
	\int_{\bar{E}} \Pi^{\perp} \bar{\mathcal{L}}_{\Gamma}^{\varepsilon} \Pi^{\perp} c \, \mathrm{d}\bar{\pi}_{\Gamma}^{\varepsilon} = -\varepsilon \int_{\bar{E}} \Pi \mathrm{d}\Gamma c \, \mathrm{d}\bar{\pi}_{\Gamma}^{\varepsilon}.	\end{equation}
	We can now combine \eqref{eq:pi_0_pert} and \eqref{eq:pi_eps_pert} to arrive at
	\begin{equation}
	\label{eq:pi_0_pi_eps}
	\int_{\bar{E}} \Pi^{\perp}\bar{\mathcal{L}}_{\Gamma}^\varepsilon \Pi^ \perp c  \cdot (1 + \varepsilon g) \, \mathrm{d}\bar{\pi}_{\Gamma}^0 - \int_{\bar{E}} \Pi^\perp \bar{\mathcal{L}}_\Gamma^\varepsilon \Pi^\perp c \, \mathrm{d}\bar{\pi}_\Gamma^\varepsilon = \mathcal{O}(\varepsilon^2).
	\end{equation}
	Let us introduce the `pseudo-inverse' 
	\begin{equation}
	Q_{\varepsilon} = \Pi^\perp \bar{\mathcal{L}}_{\Gamma}^{-1} \Pi^\perp - \varepsilon \Pi^\perp \bar{\mathcal{L}}_\Gamma^{-1}\Pi^\perp \mathrm{d} \Gamma \Pi^\perp \bar{\mathcal{L}}_{\Gamma}^{-1}\Pi^\perp,
	\end{equation}
	acting on $L^2_0(\bar{\pi}_\Gamma^0)$. We have that
	\begin{equation}
	\Pi^\perp \bar{\mathcal{L}}_\Gamma^\varepsilon \Pi^\perp Q_\varepsilon = \Pi^\perp - \varepsilon^2 \Pi^\perp \mathrm{d} \Gamma \Pi^\perp \bar{\mathcal{L}}_\Gamma^{-1} \Pi^\perp \mathrm{d}\Gamma \Pi^\perp \bar{\mathcal{L}}_\Gamma^{-1}\Pi^\perp,  
	\end{equation}
	i.e. in $L^2_0(\bar{\pi}_\Gamma^0)$, $Q_\varepsilon$ inverts $\bar{\mathcal{L}}_\Gamma^\varepsilon$ up to an error of order $\varepsilon^2$. Upon replacing $c$ by $Q_{\varepsilon} c$ in \eqref{eq:pi_0_pi_eps}, it follows that
	\begin{equation}
	\label{eq:difference quotient}
	\int_{\bar{E}} c (1 + \varepsilon g) \, \mathrm{d}\bar{\pi}_{\Gamma}^0 - \int_{\bar{E}} c \, \mathrm{d}\bar{\pi}_\Gamma^\varepsilon = \mathcal{O}(\varepsilon^2),
	\end{equation}
	recalling that $\Pi^\perp c = c$ by assumption. In the last step, we have used the fact that there exists a constant $C>0$ such that
	\begin{equation}
	\label{eq:bounded integrand}
	\left\vert\int_{\bar{E}}\Pi^\perp \mathrm{d} \Gamma \Pi^\perp \bar{\mathcal{L}}_\Gamma^{-1} \Pi^\perp \mathrm{d}\Gamma \Pi^\perp \bar{\mathcal{L}}_\Gamma^{-1}\Pi^\perp c \, \mathrm{d}\bar{\pi}_\Gamma^\varepsilon\right\vert \le C,
	\end{equation}
	uniformly in $\varepsilon$. Indeed, the integrand is bounded by the third condition of Definition \ref{def:interior point} and the fact that the coefficients of $\mathrm{d}\Gamma$ have compact support. The bound \eqref{eq:bounded integrand} is required to ensure that the corresponding integral expression hidden on the right-hand side of  \eqref{eq:difference quotient} is indeed of order $\varepsilon^2$. Finally, deviding by $\varepsilon$ in \eqref{eq:difference quotient} and letting $\varepsilon \rightarrow 0$ yields the desired result. 
\end{proof}

\section{Properties of the solutions to one-dimensional Poisson equations}
\label{sec:proofs_pert}
The proofs in this section essentially leverage the fact that the Poisson equations under consideration can be solved up to quadratures in one dimension.
\begin{proof}[Proof of Lemma \ref{lem:1d Poisson}]
	Variation of constants shows that $\phi'$ is given by
	\begin{equation}
	\label{eq:phi'}
	\phi'(x) = \left(-\int_{-\infty}^x f(s)e^{-V(s)} \mathrm{d}s + C \right) e^{V(x)}, 
	\end{equation}
	for some constant $C \in \mathbb{R}$. The requirement that $\pi(\phi)=0$ necessitates $C=0$. Indeed, from $\pi(f) = 0$ it follows that the integral term in \eqref{eq:phi'} goes to zero as $x\rightarrow \pm \infty$, and therefore
	\begin{equation}
	\lim_{x \rightarrow \pm \infty} \frac{\phi'(x)}{e^{V(x)}} = C.
	\end{equation}
	By L'H{\^o}pital's rule, we have that
	\begin{equation}
	\lim_{x \rightarrow \pm \infty} \frac{\phi(x) e^{-V(x)}}{\int_0^x e^{V(s)}\,\mathrm{d}s \cdot e^{-V(x)}} = C.
	\end{equation}
	The requirement that $\phi$ is integrable with respect to $\pi(\mathrm{d}x) \propto e^{-V(x)}\,\mathrm{d}x$ implies that $$\lim_{x \rightarrow \pm \infty} \phi(x)e^{-V(x)} = 0.$$
	Furthermore (again by L'H{\^o}pital's rule), 
	\begin{equation}
	\lim_{x \rightarrow \pm \infty} \int_0^x e^{V(s)}\,\mathrm{d}s \cdot e^{-V(x)} = -\lim_{x \rightarrow \pm\infty} V'(x),
	\end{equation}
	which cannot be zero since $\int_{-\infty}^{\infty} e^{-V(x)}\,\mathrm{d}x < \infty$. Hence, $C=0$.
	
	To prove 1.), notice that from $\pi(f) = 0$ and monotonicity, it follows that there exists $x^* \in \mathbb{R}$ such that $f(x^*)=0$. Let us assume that $f$ is monotonically increasing (for monotonically decreasing $f$ the reasoning is analogous). We then have that $f \le 0$ on $(-\infty, x^*]$ and $f \ge 0$ on $[x^*, \infty)$. Consider now the function
	\begin{equation}
	\label{eq:def Phi}
	\Phi(x) = -\int_{-\infty}^x f(s)e^{-V(s)} \,\mathrm{d}s, \quad x \in \mathbb{R}.
	\end{equation}
	Clearly $\Phi$ is increasing on $(-\infty,x^*]$ and decreasing on $[x^*,\infty)$. From $\pi(f)=0$ it follows that $\lim_{x \rightarrow \pm \infty} \Phi(x) = 0$ and hence $\Phi(x) \ge 0$ for all $x \in \mathbb{R}$. This proves the claim since $\phi'(x) = \Phi(x)e^{V(x)}$.
	
	To prove 2.), first observe that $\pi(f)=0$ implies
	\begin{equation}
	\label{eq:integral sum}
	\int_{-\infty}^0 f(s) e^{-V(s)} \mathrm{d}s + \int_0^{\infty} f(s)e^{-V(s)}\mathrm{d}s =0. 
	\end{equation}
	Furthermore, the symmetry properties of $f$ and $V$ show that 
	\begin{equation}
	\label{eq:integral symmetry}
	\int_{-\infty}^0 f(s) e^{-V(s)} \mathrm{d}s - \int_0^{\infty} f(s)e^{-V(s)}\mathrm{d}s = 0,
	\end{equation}
	using the substitution $s \mapsto -s$.
	Equations \eqref{eq:integral sum} and \eqref{eq:integral symmetry} together imply that $\Phi(0) = 0$, for $\Phi$ as defined in \eqref{eq:def Phi}. The claim now follows using an analogous argument to the one used in the proof of 1.).
\end{proof}
\begin{proof}[Proof of Lemma \ref{lem:zigzag dsigma}]
	Recall from \eqref{eq:linearised objective} that
	\begin{equation}
	\delta \sigma_F^2(\Gamma_{\alpha}) = \int_{\bar{E}} \xi \,\mathrm{d}\bar{\pi}_0,
	\end{equation}
	where $\xi(x,y) = \phi(x)\phi(y)$ and $\phi:\mathbb{R} \times \{-1,+1\}\rightarrow \mathbb{R}$ is the solution to the (one-particle) Poisson equation
	\begin{equation}
	\label{eq:Poisson Zig-Zag}
	\theta \partial_x \phi(x,\theta) + \lambda(x,\theta) \left(\phi(x,-\theta) - \phi(x,\theta) \right) = f(x), \quad \pi(\phi) =0.
	\end{equation}
	Note that for convenience, we have assumed without loss of generality that $\tilde{\pi}(f) = 0$.
	Let us now calculate
	\begin{subequations}
		\label{eq:Zig zag calculation}
		\begin{align}
		\int_{\bar{E}} \Gamma \xi \,\mathrm{d}\pi_{0} & = \frac{1}{4} \sum_{\theta_x = \pm 1,\, \theta_y = \pm 1} \int_{\mathbb{R}^2} \Bigg(\alpha(x,y,\theta_x,\theta_y)
		\cdot \bigg[ \phi(x,\theta_x) \phi(y,\theta_y) - \phi(x,\theta_x) \phi(y, -\theta_y) -  \nonumber \\
		& - \phi(x,-\theta_x) \phi(y,\theta_y) + \phi(x,-\theta_x) \phi(y,-\theta_y) \bigg]\Bigg)e^{-\left(V(x) + V(y)\right)}\,\mathrm{d}x \mathrm{d}y \nonumber \\
		& = \frac{1}{4} \int_{{\mathbb{R}^2}} \Bigg( \bigg[ \alpha_{++}(x,y) + \alpha_{--}(x,y) - \alpha_{+-}(x,y) - \alpha_{-+}(x,y)\bigg] \cdot \nonumber \\
		& \cdot \bigg[\phi_{+}(x) \phi_{+}(y) + \phi_{-}(x)\phi_{-}(y) - \phi_{+}(x)\phi_{-}(y) - \phi_{-}(x) \phi_{+}(y) \bigg] \Bigg)e^{-\left(V(x) + V(y)\right)}\mathrm{d}x \mathrm{d}y \nonumber \\
		& = \frac{1}{4} \int_{{\mathbb{R}^2}} \Bigg( \bigg[ \alpha_{++}(x,y) + \alpha_{--}(x,y) - \alpha_{+-}(x,y) - \alpha_{-+}(x,y)\bigg] \cdot \nonumber \\
		& \cdot \bigg[\phi_{+}(x) - \phi_{-}(x) \bigg] \cdot \bigg[ \phi_{+}(y) - \phi_{-}(y) \bigg] \Bigg)e^{-\left(V(x) + V(y)\right)}\,\mathrm{d}x \mathrm{d}y,  \tag{\ref*{eq:Zig zag calculation}}
		\end{align}
	\end{subequations}
	where again we employed the notation $\phi_{\pm}(x) = \phi(x,\pm 1)$.
	Observe now that equation \eqref{eq:Poisson Zig-Zag} can be recast as
	\begin{subequations}
		\begin{align}
		\label{eq:Poisson Zig1}
		\partial_x \phi_{+}(x) + \lambda_+(x) \left(\phi_{-}(x) - \phi_{+}(x) \right) &  = f(x), \\
		\label{eq:Poisson Zig2}
		-\partial_x \phi_{-}(x) + \lambda_{-}(x) \left(\phi_{+}(x) - \phi_{-}(x) \right) &  = f(x), 
		\end{align}
	\end{subequations}
	where both $\phi_+$ and $\phi_{-}$ have to be integrable with respect to the measure $\frac{1}{Z}e^{-V(x)} \mathrm{d} x$ and satisfy
	\begin{equation}
	\int_{\mathbb{R}} \left(\phi_{+}(x) + \phi_{+}(x) \right) e^{-V(x)} \mathrm{d}x = 0.
	\end{equation}
	Adding \eqref{eq:Poisson Zig1} and \eqref{eq:Poisson Zig2} leads to 
	\begin{equation}
	\label{eq:Poisson phi+ phi-}
	\left(\partial_x - V'\right) (\phi_+ - \phi_-) = 2f,
	\end{equation}
	using $\lambda_+(x) - \lambda_{-}(x) = V'(x)$. Finally setting $\partial_x \tilde{\phi} = \frac{1}{2} (\phi_+ - \phi_-)$ and comparing with \eqref{eq:Zig zag calculation} leads to the desired result. Note that as in the proof of Lemma \ref{lem:1d Poisson}, \eqref{eq:Poisson phi+ phi-} determines $\phi_+ - \phi_-$ uniquely under the condition that $\phi_+ - \phi_-$ is integrable with respect to $e^{-V(x)}\mathrm{d}x$.   
\end{proof}

\bibliographystyle{abbrv}

\end{document}